\crefname{equation}{}{}
\numberwithin{equation}{section} % Gleichungen nach Section nummerieren
\theoremstyle{plain}
\newtheorem{proposition}{Proposition}[section]
\newaliascnt{lemma}{proposition} 
\newtheorem{lemma}[lemma]{Lemma}%[section]
\Crefname{lemma}{Lemma}{Lemmas}
\newaliascnt{theorem}{proposition} 
\newtheorem{theorem}[theorem]{Theorem}%[section]
\newaliascnt{corollary}{proposition} 
\newtheorem{corollary}[corollary]{Corollary}%[section]
\newaliascnt{hypothesis}{proposition}
\theoremstyle{definition}
\newaliascnt{definition}{proposition} 
\newtheorem{definition}[definition]{Definition}%[section
\Crefname{definition}{Definition}{Definitions}
\newaliascnt{problem}{proposition} 
\newaliascnt{example}{proposition} 
\theoremstyle{remark}
\newaliascnt{remark}{proposition} 
\newtheorem{remark}[remark]{Remark}%[section]
\def\equationautorefname~#1\null{%
	%Equation
	(#1)\null
}
\newcommand{\R}{\mathbb{R}}
\newcommand{\C}{\mathbb{C}}
\newcommand{\N}{\mathbb{N}}
\newcommand{\E}{\mathcal{E}}
\newcommand{\curv}{\vec{\kappa}}
\newcommand{\scurv}{\kappa}
\renewcommand{\S}{\mathbb{S}}
\newcommand{\D}{\mathbb{D}}
\newcommand{\A}{\mathcal{A}}
\renewcommand{\H}{\mathbb{H}}
\newcommand{\W}{\mathcal{W}}
\newcommand{\defeq}{\vcentcolon=}
\newcommand{\Ll}{\mathcal{L}}
\newcommand{\cn}{\mathrm{cn}}
\newcommand{\dn}{\mathrm{dn}}
\newcommand*{\dd}{\mathop{}\!\mathrm{d}}
\def\nicefrac#1#2{%
    \raise.5ex\hbox{$#1$}%
    \kern-.15em/\kern-.05em%
    \lower.25ex\hbox{$#2$}}
\title[Singularities of the hyperbolic elastic flow]{Singularities of the hyperbolic elastic flow: Convergence, quantization and blow-ups}
\author[M.~Schlierf]{Manuel Schlierf}
\address[M.~Schlierf]{Institute for Applied Analysis, Ulm University, Helmholtzstraße 18, 89081 Ulm, Germany.}
\email{manuel.schlierf@uni-ulm.de}
\begin{document}
\begin{abstract}
  We study the elastic flow of closed curves and of open curves with clamped boundary conditions in the hyperbolic plane. While global existence and convergence toward critical points for initial data with sufficiently small energy is already known, this study pioneers an investigation into the flow's singular behavior. We prove a convergence theorem without assuming smallness of the initial energy, coupled with a quantification of potential singularities: Each singularity carries an energy cost of at least 8. Moreover, the blow-ups of the singularities are explicitly classified. \\A further contribution is an explicit understanding of the singular limit of the elastic flow of $\lambda$-figure-eights, a class of curves that previously served in showing sharpness of the energy threshold 16 for the smooth convergence of the elastic flow of closed curves.
\end{abstract}
\maketitle

\bigskip
\noindent \textbf{Keywords and phrases:} elastic flow, hyperbolic plane, quantization of singularities, asymptotic behavior.

\noindent \textbf{MSC(2020)}: 53E40, 35B40, 35A21.

%-----Content--------------------------------------------------------------------------

\section{Introduction}

The study of minimizers of Euler's elastic energy is a captivating endeavor at the crossroads of mathematics and physics. This yields a better understanding of the equilibrium states that elastic structures naturally adopt under the influences of external forces, spanning from skyscrapers and bridges to biological tissues. A substantial amount of research has already been published on such minimizers and critical points. In particular, one approach focuses on the associated $L^2$-gradient flow of the elastic energy, the so-called \emph{elastic flow}.

In the following, we not only consider curves in Euclidean space but allow for other geometries in the ambient space. Let $(M,g)$ denote some Riemannian manifold with covariant derivative $\nabla$. Throughout this article, \emph{smooth} immersions $\gamma\colon I\to M$ are considered where $I$ is either an interval or $\S^1$. For the sake of brevity, we do not explicitly write ``smooth'' in the following but always assume immersions to be smooth if the regularity is not specified. Define $\partial_s = \frac{1}{|\partial_x\gamma|_g}\partial_x\ $. Then $\curv=\nabla_{\partial_s\gamma}\partial_s\gamma$ denotes its curvature and
\begin{equation}
    \E(\gamma) = \int_I |\curv|_g^2\dd s
\end{equation}
the elastic energy of $\gamma$ where $\dd s = |\partial_x\gamma|_g\dd x$ is the arc-length element. Its $L^2(\dd s)$-gradient is denoted by $\nabla\E$ and determines the speed for the elastic flow. Furthermore, we usually denote the length of $\gamma$ in $M$ by $\Ll_M(\gamma)=\int_I\dd s$. More details are given in \cref{sec:geo-prelim}.

\subsection{Overview: State of the art}

For the elastic flow of closed curves in Euclidean space $\R^n$, one cannot expect convergence due to scaling properties: starting from any round circle in $\R^n$, the Euclidean elastic flow exists globally in time and simply scales up the circle with diameter diverging to $\infty$. As circles can have arbitrarily small Euclidean elastic energy, there is no energy bound below which one can expect convergence of solutions to a critical point for $t\to\infty$. However, if one introduces constraints on the length --- either via a Lagrange multiplier resulting in fixed length or by modifying the energy functional so that the length is penalized, then convergence is obtained for $t\to\infty$. All these observations for closed curves are given in \cite{dziukkuwertschaetzle2002}. In particular, long-time existence and convergence follows by parabolic interpolation techniques. In \cite{lin2012}, these methods are generalized to the setting of open curves with Dirichlet boundary conditions, so-called \emph{clamped} conditions. Short-time existence for initial data in the energy space $W^{2,2}([0,1],\R^n)$ and global existence and convergence for the length-preserving clamped elastic flow is shown in \cite{ruppspener2020}.

As it turns out, the properties of the elastic flow heavily depend on the geometry of the ambient space. If the ambient is given by the two-sphere, i.e. a two-manifold with constant sectional curvature $1$, the analogous result to the Euclidean case is proven in \cite{dallacqualauxlinpozzispener2018}. That is, one has global existence in time, but convergence to a critical point only if the length is penalized. However, it is still an open question whether length-penalization is actually necessary in this ambient manifold. 

Contrarily, this question is partially answered for the elastic flow in the ambient space given by the two-manifold with constant sectional curvature equal to $-1$, the so-called \emph{hyperbolic plane} $\H^2$. Long-time existence is proved in \cite{dallacquaspener2017} with similar interpolation estimates as in the other ambient cases. However, in \cite{muellerspener2020}, the authors prove a Reilly-type inequality which allows to bound the hyperbolic length of a curve by its hyperbolic elastic energy, provided the elastic energy remains uniformly below $16$ which enables proving convergence of the elastic flow if the initial energy is below $16$. The authors further show that this energy threshold is sharp. More precisely, a sequence of initial data is constructed --- so-called $\lambda$-figure-eights --- whose elastic energy approaches $16$ from above and for which the hyperbolic length blows up along the elastic flow for $t\to\infty$.

While the qualitative behavior of the elastic flow in the hyperbolic plane seems quite similar to that of the Willmore flow of spherical immersions in $\R^3$, i.e. one has global existence and convergence to a critical point below a certain, sharp energy threshold, cf. \cite{kuwertschaetzle2002,kuwertschaetzle2001,kuwertschaetzle2004}, there is also a striking quantitative relation of these geometric flows. More details and some new insights gained in this article outlining the differences of both flows are outlined later on.

Actually, whenever mentioning ``convergence'' of the elastic flow to a critical point, the above contributions mostly only prove ``sub-convergence''. By proving a \L ojasiewicz-Simon gradient inequality for a wide range of ambient manifolds, by extending the arguments in \cite{dallacquapozzispener2016} to the setting of general ambient manifolds, \cite{pozzetta2022} promotes this to full convergence for the elastic flow of closed curves.

It is directly noticeable that in the above overview no results appear in the direction of an exact description of the singular behavior of the elastic flow in $\H^2$. Indeed, while \cite{muellerspener2020} construct initial data whose elastic flow develops a singularity, they cannot say any more than that hyperbolic lengths blow up for large times. Clearly, in this context one can no longer expect smooth convergence towards critical points --- one needs suitable weaker types of convergence. For this we refer to \cite{choksiveneroni2013,choksimorandottiveneroni2013} where the authors investigate the existence of minimizers of the Canham-Helfrich energy in the class of rotationally symmetric surfaces. For this purpose, they also work at the level of profile curves and use a suitable weak notion of regularity with appropriate parametrization to obtain compactness.

\subsection{Main results}

To state the main results, we consider the Poincar\'e model $\D^2=\{(p^{(1)},p^{(2)})^t\in\R^2:(p^{(1)})^2+(p^{(2)})^2<1\}$ for the hyperbolic plane. $\D^2$ is a two-manifold with Riemannian metric given by $g_p = \frac{4}{(1-|p|^2)^2}\langle\cdot,\cdot\rangle$ where $\langle\cdot,\cdot\rangle$ is the Euclidean scalar product in $\R^2$. In the following, $\overline{\D^2}=\{p\in\R^2:|p|\leq 1\}$ denotes the \emph{Euclidean} closure of $\D^2$ in $\R^2$. Furthermore, a curve $\gamma$ is said to be parametrized by constant Euclidean speed if $|\partial_x\gamma|\equiv\mathrm{const}$.

We first prove a convergence result for the elastic flow in the hyperbolic plane --- without any assumptions on the elastic energy of the initial datum. 

\begin{theorem}\label{thm:quan-clo}
    Let $\gamma_0\colon\S^1\to\D^2$ be an immersion and $\gamma\colon[0,\infty)\times\S^1\to\D^2$ a smooth family of immersions with 
    \begin{equation}\label{eq:clo-ef}
        \begin{cases}
            \partial_t\gamma = -\nabla\E(\gamma)&\text{in $[0,\infty)\times \S^1$}\\
            \gamma(0)=\gamma_0 &\text{in $\S^1$}.
        \end{cases}
    \end{equation}
    For any sequence of times $t_n\nearrow \infty$, there are isometries $\phi_n\colon\D^2\to\D^2$ such that, for $\gamma_n=\phi_n\circ\gamma(t_n)\colon\S^1\to\D^2$ and reparametrizations $\widetilde{\gamma_n}$ by constant Euclidean speed, there exists $\widetilde{\gamma_{\infty}}\in C^0(\S^1,\overline{\D^2})\cap C^{\infty}(\S^1\setminus \{x_1,\dots,x_m\},\D^2)$ for some $x_1,\dots,x_m\in\S^1$ and $m\in\N_0$ such that $\widetilde{\gamma_n}\to\widetilde{\gamma_{\infty}}$ uniformly and in $C^{\infty}_{\mathrm{loc}}(\S^1\setminus\{x_1,\dots,x_m\})$, after passing to a subsequence.

    Furthermore, $\{|\widetilde{\gamma_{\infty}}|=1\}=\{x_1,\dots,x_m\}$, $\nabla\E(\widetilde{\gamma_{\infty}})=0$ on $\S^1\setminus\{x_1,\dots,x_m\}$ and
    \begin{equation}\label{eq:en-ie-clo}
        \E(\widetilde{\gamma_{\infty}}|_{\S^1\setminus\{x_1,\dots,x_m\}}) \leq \E(\gamma_0) - 8\cdot m.
    \end{equation}
    Lastly, if $m\geq 1$, writing $\S^1\setminus\{x_1,\dots,x_m\} = \bigcup_{i=1}^m I_i$ for disjoint open intervals $I_i$, $\widetilde{\gamma_{\infty}}|_{I_i}$ either parametrizes a geodesic or an asymptotically geodesic elastica. 
\end{theorem}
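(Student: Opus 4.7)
The plan is to combine the monotonicity of $\E$ along the flow with a concentration--compactness argument tailored to the fact that portions of $\gamma(t_n)$ may escape hyperbolically to the Euclidean boundary $\partial\D^2$. Since $\frac{\dd}{\dd t}\E(\gamma(t)) = -\|\nabla\E(\gamma(t))\|_{L^2(\dd s)}^2$, integration gives $\int_0^\infty \|\nabla\E(\gamma(t))\|_{L^2(\dd s)}^2\,\dd t < \infty$, so along any $t_n\nearrow\infty$ one may pass to a subsequence (not relabelled) with $\nabla\E(\gamma(t_n))\to 0$ in $L^2(\dd s)$.

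Next, I would choose the isometries $\phi_n\colon\D^2\to\D^2$ to anchor $\gamma_n = \phi_n\circ\gamma(t_n)$, for example by moving a point of minimum $|\gamma(t_n)|$ (or a point of controlled curvature) to the origin; after reparametrizing by constant Euclidean speed, this normalization together with the uniform bound $\E(\gamma_n)\le\E(\gamma_0)$ and $\widetilde{\gamma_n}(\S^1)\subset\overline{\D^2}$ should yield a uniform Lipschitz modulus, so Arzelà--Ascoli produces a uniform limit $\widetilde{\gamma_\infty}\in C^0(\S^1,\overline{\D^2})$ along a further subsequence. Setting $\{x_1,\dots,x_m\}\defeq\widetilde{\gamma_\infty}^{-1}(\partial\D^2)$, on compact subsets of $\S^1\setminus\{x_1,\dots,x_m\}$ the curves $\widetilde{\gamma_n}$ remain in a fixed compact of $\D^2$, where hyperbolic and Euclidean geometries are uniformly comparable. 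The $L^2$-smallness of $\nabla\E$, combined with Gagliardo--Nirenberg-type interpolation for the fourth-order $\nabla\E$, then bootstraps to uniform $C^\infty_{\mathrm{loc}}$-bounds, yielding local smooth convergence and $\nabla\E(\widetilde{\gamma_\infty}) = 0$ on $\S^1\setminus\{x_1,\dots,x_m\}$.

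To show the singular set is finite and to establish \eqref{eq:en-ie-clo}, I would prove an $\varepsilon$-regularity statement: if a small arc of $\widetilde{\gamma_n}$ carries energy strictly less than $8$, then it cannot approach $\partial\D^2$. This should follow from an extension of the Müller--Spener Reilly-type inequality (which in the closed setting yields the threshold $16$) to open arcs whose endpoints approach $\partial\D^2$, where the sharp cost of a single boundary excursion is $8$. Combined with lower semicontinuity of $\E$ on $\S^1\setminus\{x_1,\dots,x_m\}$ and the monotonicity of $\E$ along the flow, this yields the quantized inequality $\E(\widetilde{\gamma_\infty}|_{\S^1\setminus\{x_1,\dots,x_m\}}) \le \E(\gamma_0) - 8m$. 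Finally, on each component $I_i\subset\S^1\setminus\{x_1,\dots,x_m\}$, $\widetilde{\gamma_\infty}|_{I_i}$ is a smooth critical point of $\E$ in $\H^2$ whose endpoints lie on $\partial\D^2$, and the classification as either a geodesic or an asymptotically geodesic elastica is read off from the ODE satisfied by $\scurv$ along elasticae in $\H^2$ together with the asymptotic boundary behavior.

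The main obstacle is the sharp quantization constant $8$. Unlike Euclidean blow-up schemes that rescale to produce bubbles, here the loss of compactness is intrinsically hyperbolic --- hyperbolic distances diverge while Euclidean distances remain bounded --- so extracting the sharp threshold requires both a precise analysis of the limiting profiles near $\partial\D^2$ and a delicate interplay between the evolution equation and the Müller--Spener inequality adapted to open arcs. Secondary technical points are verifying the uniform Euclidean Lipschitz bound after applying the $\phi_n$ (ensuring that no convergent subsequence is excluded by pathological accumulation) and promoting the abstract $\varepsilon$-regularity to the explicit value $\varepsilon_0 = 8$.
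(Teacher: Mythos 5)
Your overall architecture matches the paper's: normalize by isometries sending a point of the curve to the origin, reparametrize by constant Euclidean speed, extract a limit, prove an $\varepsilon$-regularity statement with threshold $8$ (this is exactly \cref{lem:eich-grun}, quoted from the literature, and its Poincar\'e-model contrapositive \cref{rem:eich-grun-poin}), quantize the energy loss, and classify the limiting arcs via infinite hyperbolic length plus finite energy. However, there are two genuine gaps.

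First, the uniform Lipschitz bound for $\widetilde{\gamma_n}$ is \emph{equivalent} to a uniform upper bound on $\Ll_{\R^2}(\gamma_n)$, and this does not follow from $\E(\gamma_n)\le\E(\gamma_0)$ together with $\widetilde{\gamma_n}(\S^1)\subset\overline{\D^2}$: containment in a bounded set never controls length, and the energy is measured in the hyperbolic metric, which degenerates (relative to the Euclidean one) exactly where the curve concentrates. The paper devotes \cref{thm:euc-len-poin} to this point, and its proof is not soft: one assumes $\Ll_{\R^2}\to\infty$, extracts an arclength-parametrized limit on $[0,\infty)$, applies the full quantization machinery to see that the singular set is finite, and then contradicts \cref{lem:el-fin-len} (no elastica in $\D^2$ with finite elastic energy and infinite Euclidean length --- itself a computation with M\"obius transformations). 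Your sketch silently assumes the conclusion of \cref{thm:euc-len-poin}.

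Second, your $\varepsilon$-regularity only bounds the number of \emph{connected components} of $\widetilde{\gamma_\infty}^{-1}(\partial\D^2)$; it does not rule out that a component is a nondegenerate arc. Since the $\widetilde{\gamma_n}$ have constant Euclidean speed bounded below, the uniform limit could a priori traverse a subarc of $\partial\D^2$ of positive Euclidean length, in which case $\{|\widetilde{\gamma_\infty}|=1\}$ would not be a finite point set and the statement would fail as written. Excluding this is a substantive part of the paper: \cref{cor:bu-len,cor:bu-en-poin} show that curves approaching hyperbolic infinity with non-vanishing Euclidean length must have energy blowing up (via the Gauss--Bonnet estimate \eqref{eq:est-dxu1} and a counting argument on sign changes of $\partial_xu^{(2)}$ using \eqref{eq:br-gr}), and \cref{cor:ch-ve} adds a Choksi--Veneroni-type BV argument for $\partial_xu^{(2)}/|\partial_xu|$ to conclude $|Z|=0$. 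Neither ingredient appears in your plan. A smaller omission: you do not address the uniform \emph{lower} bound on $\Ll_{\R^2}(\gamma_n)$ (needed so the constant-speed limit is an immersion off the singular set), which the paper obtains from Fenchel's theorem plus the anchoring at the origin; your normalization does supply it, but the point deserves mention.
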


Here we make use of the classification of hyperbolic elastica, i.e. critical points of $\E$, given in \cite{langersinger1984}, cf. \Cref{fig:elastica}. More details are revised later on.

\begin{figure}[!ht]
    \centering
    \begin{subfigure}{0.46\textwidth}
        \includegraphics[width=\linewidth]{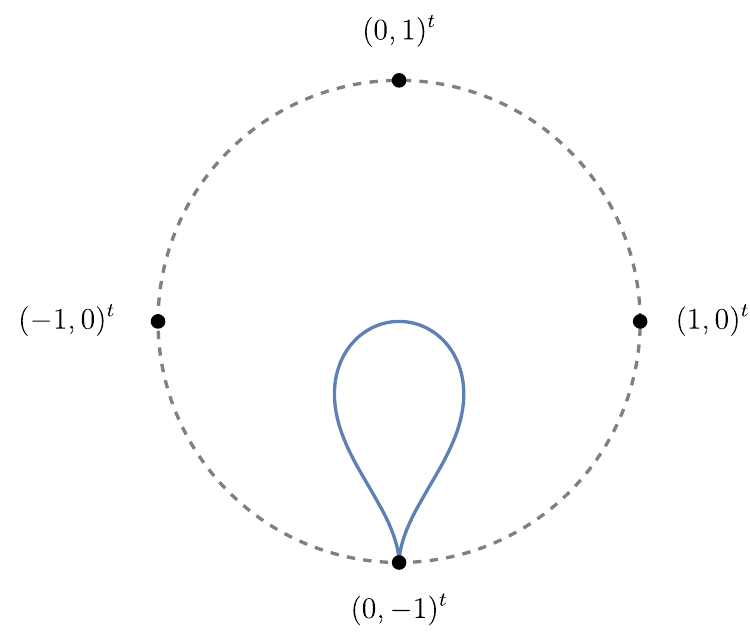}
    \end{subfigure}
    \qquad
    \begin{subfigure}{0.46\textwidth}
        \includegraphics[width=\linewidth]{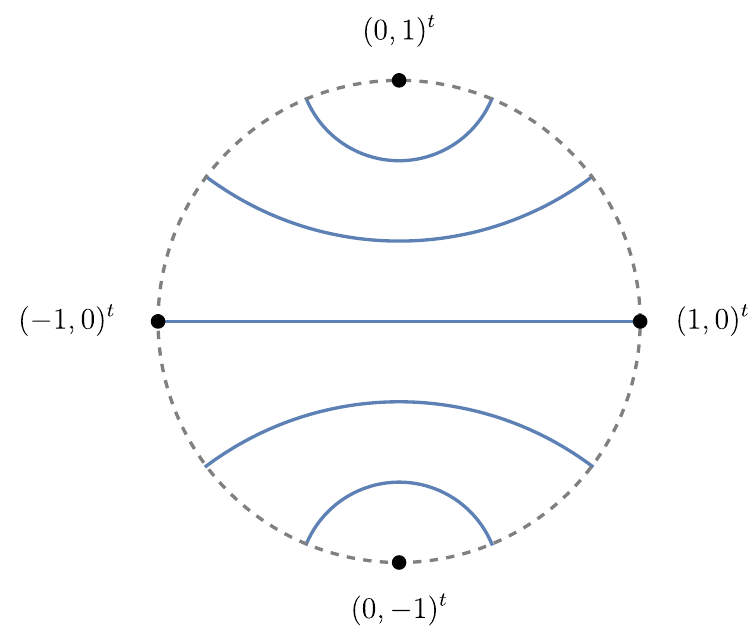}
    \end{subfigure}
    \caption{Plots of an asymptotically geodesic elastica (left) and geodesics (right) in $\D^2$.}
    \label{fig:elastica}
\end{figure}

The first observation one draws from \cref{thm:quan-clo} is that singularities only form at the ``boundary'' of $\D^2$ in $\R^2$, i.e. at ``hyperbolic infinity''. Moreover, the elastic energy of the initial immersion provides an upper bound on the number of singularities that can form at the boundary of $\D^2$ --- namely, by \cref{eq:en-ie-clo}, any such singularity costs at least $8$ units in terms of the elastic energy. 

In fact, as the proof of \cref{thm:quan-clo} reveals, there are only two possibilities one can observe. Either $\gamma(t_n)$ converges uniformly to a point on $\partial\D^2=\S^1\subseteq\R^2$. Or, if this is not the case, the isometries $\phi_n$ in \cref{thm:quan-clo} can be chosen trivially, i.e. $\phi_n(z)=z$. The first case can be seen as an ``implosion'' of the flow down to a single point on the boundary of $\D^2$. However, one then obtains a ``blow-up'' analysis of the imploding flow and states that the singular blow-up limit still only has finitely many singularities and is given as geodesics or asymptotically geodesic elastica everywhere else.
 
We later consider a special class of initial data $\gamma_0$ (which we call symmetric figure-eights) leading to singularities in the elastic flow for which there are no implosions in the above sense. Even more, $\widetilde{\gamma_{\infty}}$ does not depend on the choice of sequence $(t_n)_{n\in\N}$ for this class.
 
The estimate in \cref{eq:en-ie-clo} corresponds to typical energy inequalities in the literature relating the energies of singular limits of flows with those of the initial datum in terms of the number $m$ of singularities in the singular limit, cf. \cite[Theorem 6.6]{struwe2008}.

We also show the corresponding version of \cref{thm:quan-clo} for the \emph{clamped} elastic flow. 

\begin{theorem}\label{thm:quan-cla}
    Let $\gamma_0\colon[-1,1]\to\D^2$ be an immersion and $\gamma\colon[0,\infty)\times[-1,1]\to\D^2$ a smooth family of immersions evolving by elastic flow with clamped boundary conditions, that is, satisfying
    \begin{equation}\label{eq:cla-ef}
        \begin{cases}
            \partial_t\gamma = -\nabla\E(\gamma)&\text{in $[0,\infty)\times [-1,1]$}\\
            \gamma(0)=\gamma_0 &\text{in $[-1,1]$}\\
            \gamma(t,y) = \gamma_0(y)&\text{for $t\in[0,\infty)$ and $y=\pm1$}\\
            \partial_s\gamma(t,y) = \partial_s\gamma_0(y)&\text{for $t\in[0,\infty)$ and $y=\pm1$}.
        \end{cases}
    \end{equation}
    For any sequence of times $t_n\nearrow \infty$, writing $\widetilde{\gamma_n}$ for the reparametrization of $\gamma(t_n)$ by constant Euclidean speed on $[-1,1]$ for each $n\in\N$, there exists $\widetilde{\gamma_{\infty}}\in C^0([-1,1],\overline{\D^2})\cap C^{\infty}([-1,1]\setminus \{x_1,\dots,x_m\},\D^2)$ for some $x_1,\dots,x_m\in(-1,1)$ and $m\in\N_0$ such that $\widetilde{\gamma_n}\to\widetilde{\gamma_{\infty}}$ uniformly and in $C^{\infty}_{\mathrm{loc}}([-1,1]\setminus\{x_1,\dots,x_m\})$, after passing to a subsequence.

    Furthermore, $\{|\widetilde{\gamma_{\infty}}|=1\}=\{x_1,\dots,x_m\}$, $\nabla\E(\widetilde{\gamma_{\infty}})=0$ on $[-1,1]\setminus\{x_1,\dots,x_m\}$ and 
    \begin{equation}\label{eq:en-ie-cla}
        \E(\widetilde{\gamma_{\infty}}|_{[-1,1]\setminus\{x_1,\dots,x_m\}}) \leq \E(\gamma_0) - 8\cdot m.
    \end{equation}
    Lastly if $m\geq 1$, $\widetilde{\gamma_{\infty}}|_{(x_i,x_{i+1})}$ is either a geodesic or an asymptotically geodesic elastica, for all $i=0,\dots,m$ where $x_0=-1$ and $x_{m+1}=1$.
\end{theorem}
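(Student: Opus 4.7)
The plan is to mirror the proof of \Cref{thm:quan-clo}, exploiting the simplifications offered by the clamped boundary conditions: since $\gamma(t,\pm 1) = \gamma_0(\pm 1) \in \D^2$ stays fixed in the interior of the disk for all $t \geq 0$, there is no possibility of an ``implosion'' to a boundary point and no normalizing isometries $\phi_n$ are needed. In particular, the singular set will automatically lie in $(-1,1)$.

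First I would exploit the energy monotonicity
\begin{equation*}
    \frac{d}{dt}\E(\gamma(t)) = -\|\nabla\E(\gamma(t))\|_{L^2(\dd s)}^2 \leq 0,
\end{equation*}
valid because the admissible variations vanish at the boundary, to obtain $\E(\gamma(t)) \leq \E(\gamma_0)$ and $\int_0^\infty \|\nabla\E(\gamma(t))\|_{L^2}^2 \, dt \leq \E(\gamma_0)$; a Fubini argument then furnishes a (non-relabeled) subsequence with $\|\nabla\E(\gamma(t_n))\|_{L^2} \to 0$. After reparametrizing by constant Euclidean speed on $[-1,1]$, the energy bound combined with the fixed endpoint data forces a uniform bound on the Euclidean length (by the same geometric argument used for \Cref{thm:quan-clo}), so the $\widetilde{\gamma_n}$ are equi-Lipschitz into the compact set $\overline{\D^2}$. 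Arzelà--Ascoli then yields a uniform limit $\widetilde{\gamma_\infty} \in C^0([-1,1], \overline{\D^2})$ with $\widetilde{\gamma_\infty}(\pm 1) = \gamma_0(\pm 1) \in \D^2$.

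Next I would define the singular set $\{x_1,\dots,x_m\} \defeq \{x \in [-1,1] : |\widetilde{\gamma_\infty}(x)| = 1\}$, automatically contained in $(-1,1)$. On any closed subset of $[-1,1]\setminus\{x_1,\dots,x_m\}$, the $\widetilde{\gamma_n}$ remain in a compact subset of $\D^2$ where hyperbolic and Euclidean geometries are uniformly equivalent; combined with the uniform energy bound and standard interior parabolic regularity for the clamped elastic flow (cf.~\cite{lin2012,dallacquaspener2017}), this upgrades the $C^0$ convergence to $C^\infty_{\loc}([-1,1]\setminus\{x_1,\dots,x_m\})$. Passing to the limit in $\nabla\E(\gamma(t_n)) \to 0$ on compacta gives $\nabla\E(\widetilde{\gamma_\infty}) = 0$ off the singular set, so each $\widetilde{\gamma_\infty}|_{(x_i,x_{i+1})}$ is a hyperbolic elastica.

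For the energy inequality \cref{eq:en-ie-cla} I would invoke the blow-up analysis already developed in the proof of \Cref{thm:quan-clo}: near each $x_i$, hyperbolic isometries pull $\gamma(t_n)$ back into a bounded region of $\D^2$, extracting a non-trivial bubble carrying at least $8$ units of elastic energy. Summing over the $m$ singularities and using lower semicontinuity of $\E$ on the regular part then yields \cref{eq:en-ie-cla}. If $m \geq 1$, every interval $(x_i,x_{i+1})$ (with $x_0 = -1$, $x_{m+1} = 1$) has at least one endpoint in $\{x_1,\dots,x_m\} \subseteq (-1,1)$, so $\widetilde{\gamma_\infty}|_{(x_i,x_{i+1})}$ accumulates to $\partial\D^2$; the Langer--Singer classification \cite{langersinger1984} then forces it to be either a geodesic or an asymptotically geodesic elastica, as wavelike and orbitlike elastica remain compactly contained in $\D^2$. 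The hardest part I anticipate will be the energy quantization, i.e.~ensuring that each blow-up captures the full quantum of $8$ without energy escaping through a poorly chosen sequence of rescaling isometries; the clamped setting helps here, since the fixed endpoints anchor the flow in a bounded region and reduce the degeneracies that must be handled compared to the closed case.
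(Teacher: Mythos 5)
Your overall strategy---mirror the proof of \cref{thm:quan-clo} and use the clamped boundary data to rule out implosions and dispense with the normalizing isometries $\phi_n$---is exactly the paper's route: its proof of \cref{thm:quan-cla} is a short reduction to the closed case, with the lower Euclidean length bound coming from \cref{rem:hyp-len-lb,rem:euc-len-lb} and the upper bound from \cref{thm:euc-len-poin}. However, there is a genuine gap at the heart of your sketch: you \emph{define} the singular set as $\{x_1,\dots,x_m\}\defeq\{x:|\widetilde{\gamma_{\infty}}(x)|=1\}$, i.e.\ you assume from the outset that it is finite. This is the central technical point of the theorem and does not follow from the bubbling heuristic alone: a priori $\{|\widetilde{\gamma_{\infty}}|=1\}$ could contain an entire interval (an arc of the limit lying on $\partial\D^2$), in which case you cannot interpose regular points between singular ones so as to extract energy quanta of $8$ via \cref{lem:eich-grun}. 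Excluding this is precisely the content of \cref{cor:ch-ve} and \cref{cor:ch-ve-poin}, whose proof requires the Gauss--Bonnet estimate \eqref{eq:est-dxu1} together with the Choksi--Veneroni-type argument (a BV bound on $\partial_xu_n^{(2)}/|\partial_xu_n|$ from \eqref{eq:br-gr}, strong $L^1$ convergence of the derivatives, and Rademacher's theorem to force $|Z|=0$). Some version of this must appear; without it the finiteness of $m$ is unproved.

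Two further points. First, for an \emph{arbitrary prescribed} sequence $t_n\nearrow\infty$, integrability of $h(t)=\|\nabla\E(\gamma(t))\|_{L^2(\dd s)}^2$ only produces \emph{some} sequence of times along which $h\to0$, not a subsequence of the given one, so your ``Fubini argument'' does not yield $\nabla\E(\widetilde{\gamma_{\infty}})=0$ for the sequence you are handed; the paper instead shows $\partial_th\in L^{\infty}(0,\infty)$ by interpolation, whence $h(t)\to0$ as $t\to\infty$ along every sequence (this is carried out inside \cref{lem:int-lem}). Second, as a comparison: you derive the quantum $8$ per singularity from a blow-up, whereas the paper obtains \eqref{eq:en-ie-cla} directly from \cref{rem:eich-grun-poin} applied on $[x_j-\delta,x_j+\delta]$ (endpoints bounded away from $\partial\D^2$, interior touching it); the blow-up is reserved for the \emph{improved} inequality of \cref{prop:bu-clo}, and your version would additionally have to rule out energy escaping between bubbles, which your sketch does not address.
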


The main distinguishing feature is that, due to the Dirichlet boundary conditions, ``implosions'' cannot occur. Thus, one does not require the ``blow-up-isometries'' $\phi_n$ for the clamped flow. So, the singular limit $\widetilde{\gamma_{\infty}}$ satisfies the boundary data of the flow, i.e.
\begin{equation}
    \widetilde{\gamma_{\infty}}(\pm1) = \gamma_0(\pm1) \quad\text{and}\quad\partial_s\widetilde{\gamma_{\infty}}(\pm1)=\partial_s\gamma_0(\pm1).
\end{equation}

\begin{remark}
    We give an energy threshold for $\E(\gamma_0)$ below which \cref{thm:quan-cla} yields full convergence of suitable reparametrizations of $\gamma(t)$ for $t\to\infty$. For $p_{\pm1}\in\D^2$ and $v_{\pm1}\in T_{p_{\pm1}}\D^2$ with $|v_{\pm1}|_g=1$, define 
    \begin{align}
        M(p_{\pm1},v_{\pm1}) = \inf\big\{\E(\gamma) &\mid\gamma\in C^0([-1,1],\overline{\D^2})\cap C^{\infty}([-1,1]\setminus\{x_0\},\D^2)\text{ for }x_0\in (-1,1)\\
        &\quad\text{with }|\gamma(x_0)|=1, \gamma(\pm1)=p_{\pm1}\text{ and } \partial_s\gamma(\pm1)=v_{\pm1}\\
        &\quad\text{such that } \gamma|_{(-1,x_0)},\gamma|_{(x_0,1)}\text{ either is a segment of a geodesic} \\
        &\quad \text{or an asymptotically geodesic elastica, respectively}\big\}.
    \end{align}
    With this definition, if 
    \begin{equation}
        \E(\gamma_0)<8+M\big(\gamma_0(\pm1),\partial_s\gamma_0(\pm1)\big),
    \end{equation}
    then \eqref{eq:en-ie-cla} yields $m=0$ and therefore $\Ll_{\D^2}(\widetilde{\gamma_{\infty}})<\infty$. This is due to the properties of $\widetilde{\gamma_{\infty}}$ in \cref{thm:quan-cla}. In particular, a standard subsequence argument yields that the hyperbolic length is uniformly bounded along the evolution, that is, $\sup_{t\in[0,\infty)}\Ll_{\D^2}(\gamma(t))<\infty$. Then full convergence of suitable reparametrizations of $\gamma(t)$ for $t\to\infty$ follows from \cite[Theorem~4.5 with $a\equiv -2$]{schlierf2023}.

    Note that this energy threshold is equivalent to the recently introduced energy threshold in \cite[Theorem~1.1]{eichmann2024} (and independently obtained in \cite[Theorem~1.3 and Appendix~D]{schlierf2024DirichletWillmore}) for the convergence of the clamped Willmore flow of surfaces of revolution.
\end{remark}

Lastly, blow-ups at those singularities $x_j$ always yield asymptotically geodesic elastica: 

\begin{proposition}[Blow-up analysis]\label{prop:bu-clo}
    In the setting of \cref{thm:quan-clo} or \cref{thm:quan-cla}, fix $1\leq j\leq m$. Then, for any $\delta>0$ sufficiently small, there are isometries $\psi_n\colon\D^2\to\D^2$ (``blow-ups'') such that, for the reparametrizations 
    \begin{equation}
        \widetilde{\tau_n}\colon[-1,1]\to\D^2\text{ of }\tau_n=\psi_n\circ\widetilde{\gamma_n}|_{(x_j-\delta,x_j+\delta)}\text{ by constant Euclidean speed},
    \end{equation}
    there are $\widetilde{\tau_{\infty}}\colon[-1,1]\to\overline{\D^2}$, $n_j\in\N$ and $y_1<\dots<y_{n_j-1}\in(-1,1)$ such that $\widetilde{\tau_{\infty}}|_{(y_{i-1},y_{i})}$ all parametrize an asymptotically geodesic elastica with constant Euclidean speed, respectively for $i=1,\dots,n_j$ where $y_0=-1$ and $y_{n_j}=1$. Furthermore, $\widetilde{\tau_n}\to\widetilde{\tau_{\infty}}$ uniformly in $[-1,1]$ and smoothly on compact intervals in $(-1,1)\setminus\{y_1,\dots,y_{n_j-1}\}$.

    Lastly, one has the improved energy inequality
    \begin{equation}\label{eq:impr-ei}
        \E(\widetilde{\gamma_{\infty}}|_{I\setminus\{x_1,\dots,x_m\}})\leq \E(\gamma_0) - 8\sum_{j=1}^m n_j.
    \end{equation}    
\end{proposition}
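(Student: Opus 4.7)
The plan is to mimic the concentration-compactness scheme underlying \Cref{thm:quan-clo,thm:quan-cla}, now at the scale of a single singularity $x_j$. I would choose $\delta>0$ so small that $(x_j-\delta,x_j+\delta)$ is free of other singularities and $\widetilde{\gamma_n}(x_j\pm\delta)\to\widetilde{\gamma_\infty}(x_j\pm\delta)\in\D^2$. For each $n$, let $\psi_n\colon\D^2\to\D^2$ be a hyperbolic isometry with $\psi_n(\widetilde{\gamma_n}(x_j))=0$; set $\tau_n=\psi_n\circ\widetilde{\gamma_n}|_{(x_j-\delta,x_j+\delta)}$ and denote its constant Euclidean speed reparametrization on $[-1,1]$ by $\widetilde{\tau_n}$. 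By isometry invariance, $\E(\tau_n)\leq\E(\gamma_0)$, and the approximate criticality $\|\nabla\E(\widetilde{\gamma_n})\|_{L^2_{\mathrm{loc}}}\to 0$ supplied by the elastic flow transfers equivariantly to $\tau_n$.

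The sequence $\widetilde{\tau_n}$ then satisfies the hypotheses of the compactness/regularity machinery used in the proofs of \Cref{thm:quan-clo,thm:quan-cla}, which, up to a subsequence, produces $\widetilde{\tau_\infty}$ with the asserted regularity and convergence, together with a singular set $\{y_1,\ldots,y_{n_j-1}\}\subset(-1,1)$ on whose complement $\widetilde{\tau_\infty}$ is a critical point of $\E$. Moreover $\widetilde{\tau_\infty}(\pm1)\in\partial\D^2$: the hyperbolic distance $d_{\H^2}(\widetilde{\gamma_n}(x_j),\widetilde{\gamma_n}(x_j\pm\delta))$ diverges (the first argument tending to $\partial\D^2$, the second staying in the interior), and since the isometries $\psi_n$ anchor the former at the origin, they push the latter out to $\partial\D^2$. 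The same reasoning applied locally around each $y_i$ gives $\widetilde{\tau_\infty}(y_i)\in\partial\D^2$, so every piece $\widetilde{\tau_\infty}|_{(y_{i-1},y_i)}$ is a complete critical point of $\E$ in $\D^2$ with both endpoints on $\partial\D^2$.

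The main obstacle is to exclude geodesic pieces in favor of asymptotically geodesic elastica. I would argue by iteration: if a blow-up produces a geodesic segment, then because a geodesic carries zero energy while the concentration at $x_j$ is at least $8$ by \Cref{thm:quan-clo,thm:quan-cla}, the missing concentration must localize at (one of) the geodesic's boundary endpoints. A secondary blow-up centered there reveals an asymptotically geodesic elastica by the Langer--Singer classification. Since each asymptotically geodesic elastica carries at least $8$ units of energy (cf.\ the analysis via $\lambda$-figure-eights in \cite{muellerspener2020}), the iteration terminates after at most $\E(\gamma_0)/8$ refinements; the quantity $n_j$ denotes the total number of asymptotically geodesic elastica pieces after full resolution.

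Finally, \cref{eq:impr-ei} is obtained by concentration bookkeeping. Smooth convergence on $I\setminus\bigcup_j(x_j-\delta,x_j+\delta)$ gives convergence of energies there, while on each $(x_j-\delta,x_j+\delta)$ the blow-up analysis yields $\lim_n\E(\widetilde{\tau_n})\geq\E(\widetilde{\tau_\infty})+8(n_j-1)\geq 8n_j+8(n_j-1)=16n_j-8$, using that each of the $n_j-1$ interior singularities of the blow-up costs at least $8$ by the same quantization. Summing over $j$ and sending $\delta\to 0$ produces $\E(\gamma_0)\geq\E(\widetilde{\gamma_\infty}|_{I\setminus\{x_1,\ldots,x_m\}})+16\sum_jn_j-8m$, which, since each $n_j\geq 1$, dominates the stated $\E(\widetilde{\gamma_\infty}|_{I\setminus\{x_1,\ldots,x_m\}})+8\sum_jn_j$.
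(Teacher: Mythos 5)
Your compactness setup and the identification of the singular set are fine, but there is a genuine gap at the decisive step: excluding geodesic pieces in the blow-up limit. The paper does \emph{not} center the blow-up at $\widetilde{\gamma_n}(x_j)$ as you do. Instead it passes to the half-plane via $\Phi$, locates the point $\hat y_n$ where $u_n^{(2)}$ is \emph{minimal} on $[x_j-\delta,x_j+\delta]$, and takes $\psi_n$ to be (the conjugate under $\Phi$ of) the translation-and-dilation normalizing $u_n^{(2)}(\hat y_n)=1$. This choice forces $\Phi^{(2)}\circ\tau_n\geq 1$ everywhere, i.e.\ the entire blown-up curve lies in $\overline{B}_{1/2}((0,\tfrac12)^t)$, a set whose closure meets $\partial\D^2$ only at the single point $(0,1)^t$. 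Consequently \emph{every} singular point $y_i$ of $\widetilde{\tau_\infty}$ is mapped to $(0,1)^t$, so each piece $\widetilde{\tau_\infty}|_{(y_{i-1},y_i)}$ is an elastica of finite energy whose two ideal endpoints coincide; by \cref{rem:el-sm-elen} it is then geodesic or asymptotically geodesic, and a geodesic is ruled out because its two ideal endpoints are distinct. Your normalization gives no such containment: the pieces of your limit may well have distinct boundary endpoints, and geodesics cannot be excluded. Your proposed remedy---iterated secondary blow-ups wherever a geodesic appears---is not an argument for the statement as formulated, which asserts the existence of a \emph{single} sequence of isometries $\psi_n$ whose blow-up limit already consists only of asymptotically geodesic pieces, with $n_j$ counting the pieces of \emph{that} limit and entering \cref{eq:impr-ei}. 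Redefining $n_j$ as ``the total number of pieces after full resolution'' changes the statement, and the iteration itself is not carried out: you give no construction of the secondary isometries, no reason why the residual concentration must sit at an endpoint of the geodesic piece, no termination argument beyond a heuristic energy count, and no way to reassemble the multi-scale limits into one curve $\widetilde{\tau_\infty}\colon[-1,1]\to\overline{\D^2}$ that is the uniform limit of the $\widetilde{\tau_n}$.

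A secondary issue is the final bookkeeping. The paper deduces \cref{eq:impr-ei} simply from $\E(\widetilde{\tau_\infty}|_{(y_{i-1},y_i)})=8$ for each of the $n_j$ pieces (a complete asymptotically geodesic elastica has energy exactly $8$, not merely at least $8$), giving $\limsup_n\E(\widetilde{\gamma_n}|_{(x_j-\delta,x_j+\delta)})\geq 8n_j$ and then arguing as in \cref{eq:com-ei}. Your stronger claim $\lim_n\E(\widetilde{\tau_n})\geq\E(\widetilde{\tau_\infty})+8(n_j-1)$, crediting an extra $8$ to each interior singularity of the blow-up, is not needed, is not justified in your sketch (it would require a separate concentration argument at each $y_i$ via \cref{rem:eich-grun-poin} together with control of the energy of the tails of the limit pieces), and in any case cannot repair the missing exclusion of geodesics.
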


For two special classes of initial data, one for the elastic flow of closed curves, the other for the clamped elastic flow, we can show even stronger results. In fact, for the reparametrizations by constant Euclidean speed, we can show convergence for $t\to\infty$ to an explicitly given geodesic. In particular, the limit does not depend on a subsequence of times anymore. Furthermore, for the flow of closed curves with this initial datum, we can exclude aforementioned ``implosions'' so that the isometries $\phi_n$ are not needed. In the following theorem, one makes the identification $\S^1=\nicefrac{[-2,2]\ }{\sim}$ where the endpoints $\pm2$ are equivalent by $\sim$.

\begin{theorem}\label{thm:sf8-conv}
    Let $\gamma_0\colon\S^1\to\D^2$ be a symmetric figure-eight with $16<\E(\gamma_0)<32$ and denote by $\gamma\colon[0,\infty)\times\S^1\to\D^2$ a smooth family of immersions satisfying \cref{eq:clo-ef}. Then there are reparametrizations $\widetilde{\gamma(t)}$ of $\gamma(t)$ with constant Euclidean speed such that $\widetilde{\gamma(t)}\to \widetilde{\gamma_{\infty}}$ uniformly for $t\to\infty$ and smoothly on $\S^1\setminus\{\pm 1\}$ where $\widetilde{\gamma_{\infty}}\colon\S^1\to\overline{\D^2}$,
    \begin{equation}
        \widetilde{\gamma_{\infty}}(x)=\begin{cases}
            (0,-x-2)^t&-2\leq x\leq-1\\
            (0,x)^t&-1\leq x\leq 1\\
            (0,-x+2)^t&1\leq x\leq 2.
        \end{cases}
    \end{equation}
\end{theorem}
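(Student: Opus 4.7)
The plan is to leverage the reflection symmetries characteristic of a symmetric figure-eight together with \cref{thm:quan-clo} to pin down the singular limit explicitly. A symmetric figure-eight $\gamma_0$ possesses two symmetries realized by ambient isometries $\sigma_j\colon\D^2\to\D^2$ (the reflection across the vertical axis, and the point reflection at the origin, up to convention) together with parametric involutions $\rho_j\colon\S^1\to\S^1$ satisfying $\gamma_0=\sigma_j\circ\gamma_0\circ\rho_j$. Since $\E$ and $\nabla\E$ commute with ambient isometries and parametric diffeomorphisms, and since \cref{eq:clo-ef} admits unique smooth solutions, the identity $\gamma(t)=\sigma_j\circ\gamma(t)\circ\rho_j$ persists for all $t\geq 0$. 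In particular, the node of the figure-eight remains pinned at the origin throughout the evolution.

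Next, I would apply \cref{thm:quan-clo} to an arbitrary sequence $t_n\nearrow\infty$ to produce a subsequential limit $\widetilde{\gamma_\infty}$ with singular set $\{x_1,\dots,x_m\}\subseteq\S^1$. The pinning of the node at the origin (a fixed point of both $\sigma_j$) excludes the ``implosion'' case: if $\gamma(t_n)$ imploded to a boundary point after composing with $\phi_n$, then the node would have to lie on $\partial\D^2$ as well, which is impossible. Hence the $\phi_n$ may be taken to be the identity, and $\widetilde{\gamma_\infty}$ inherits both symmetries. The quantization \cref{eq:en-ie-clo} together with $\E(\gamma_0)<32$ forces $m\leq 3$, while invariance of $\{x_1,\dots,x_m\}$ under $\rho_1,\rho_2$ forces $m$ to be even, leaving $m\in\{0,2\}$. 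The case $m=0$ would produce a smooth closed elastica in $\D^2$ of energy larger than $16$ carrying both symmetries, which the Langer--Singer classification excludes; so $m=2$, and the symmetries force $\{x_1,x_2\}=\{-1,1\}$ in the identification $\S^1=[-2,2]/\sim$.

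Finally, by \cref{thm:quan-clo} each regular arc $\widetilde{\gamma_\infty}|_{I_i}$ is a geodesic or an asymptotically geodesic elastica. Invariance under reflection across the vertical axis forces each arc to lie on the vertical axis, which is itself a geodesic in $\D^2$; matching the endpoints $(0,0),(0,\pm 1)$ and imposing constant Euclidean speed then yields precisely the stated piecewise formula. The explicit limit does not depend on the sequence $t_n$, so a standard subsequence argument upgrades subsequential convergence to full convergence as $t\to\infty$. I expect the most delicate step to be the rigorous exclusion of $m=0$: ruling out a smooth closed hyperbolic elastica carrying the figure-eight symmetries at energy in $(16,32)$ seems to require a careful application of the Langer--Singer classification, or alternatively a barrier argument based on the equivariant structure of the flow showing that the two loops must be driven outward to $\partial\D^2$.
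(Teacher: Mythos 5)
Your overall architecture coincides with the paper's: symmetry preservation via geometric uniqueness of the flow, pinning of the node at the origin to exclude implosion, quantization plus parity of $m$ to reduce to $m\in\{0,2\}$, and identification of the limit followed by a subsequence argument. However, two steps as you state them contain genuine gaps.

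First, the exclusion of $m=0$. You claim the limit would be a closed elastica ``of energy larger than $16$'', but nothing in the argument provides a lower energy bound for the limit: the energy is monotone decreasing and \cref{eq:en-ie-clo} only gives an upper bound, so you cannot rule out a closed elastica of small energy this way. The correct mechanism is topological, not energetic: the winding number ($=0$ for a figure-eight) is preserved along the flow and under smooth convergence on all of $\S^1$, and there is no closed free elastica in the hyperbolic plane with vanishing winding number (\cite[Corollary 5.8]{muellerspener2020}). Your hedge that this step ``seems to require a careful application of the Langer--Singer classification'' points in the right direction but does not supply the argument.

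Second, the identification of the two regular arcs. The claim that ``invariance under reflection across the vertical axis forces each arc to lie on the vertical axis'' fails on two counts: the reflection symmetry (S2) \emph{swaps} the two arcs $(-1,1)$ and $\S^1\setminus[-1,1]$ rather than mapping each to itself (only the singular parameters $\pm1$ are fixed), and even a curve invariant under a reflection need not lie on the mirror line. In particular you never rule out that an arc is an asymptotically geodesic elastica, which can perfectly well be symmetric about the vertical axis. The paper excludes this via (S1): the endpoints satisfy $\widetilde{\gamma_{\infty}}(-x_1)=-\widetilde{\gamma_{\infty}}(x_1)$, hence are antipodal and in particular distinct points of $\partial\D^2$, whereas a complete asymptotically geodesic elastica has coinciding endpoints on $\partial\D^2$ (cf.\ \Cref{fig:elastica}). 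Each arc is therefore a geodesic with antipodal boundary endpoints, i.e.\ a Euclidean diameter; evaluating (S2) at its fixed parameters $\pm1$ gives $\widetilde{\gamma_{\infty}}(\pm1)=\mathrm{diag}(-1,1)\,\widetilde{\gamma_{\infty}}(\pm1)$ and hence pins the diameter to be vertical, and the constant-Euclidean-speed normalization (or, as you suggest, invariance of $Z$ under $y\mapsto 2-y$) yields $x_1=1$. With these two repairs your proof matches the paper's.
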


More details on the definition of ``symmetric figure-eights'' are provided in \cref{sec:fig-eights}. Here it suffices to note that the aforementioned examples of initial data resulting in singular behavior for the elastic flow in \cite{muellerspener2020} is contained in the class of such symmetric figure-eights. They are called $\lambda$-figure-eights and are visualized in \Cref{fig:lamb-fig-eights}.

\begin{figure}[!ht]
    \centering
    \begin{subfigure}{0.46\textwidth}
        \includegraphics[width=\linewidth]{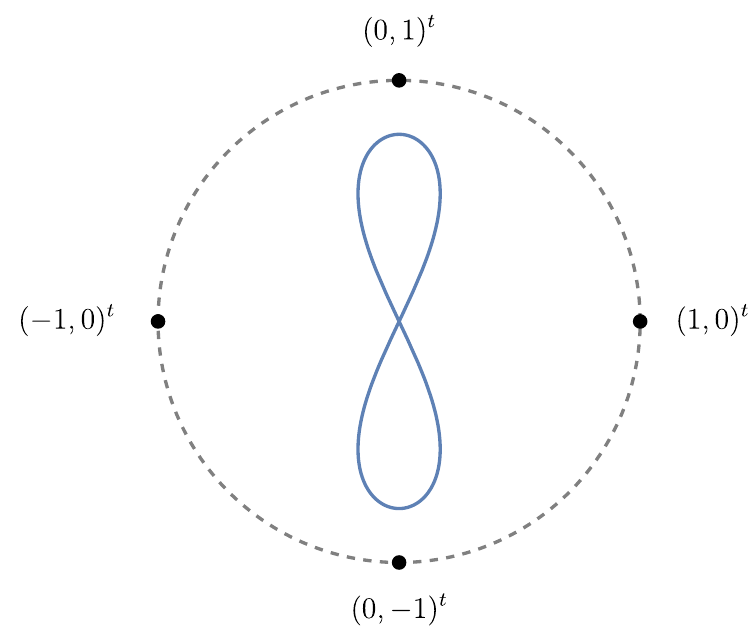}
    \end{subfigure}
    \qquad
    \begin{subfigure}{0.46\textwidth}
        \includegraphics[width=\linewidth]{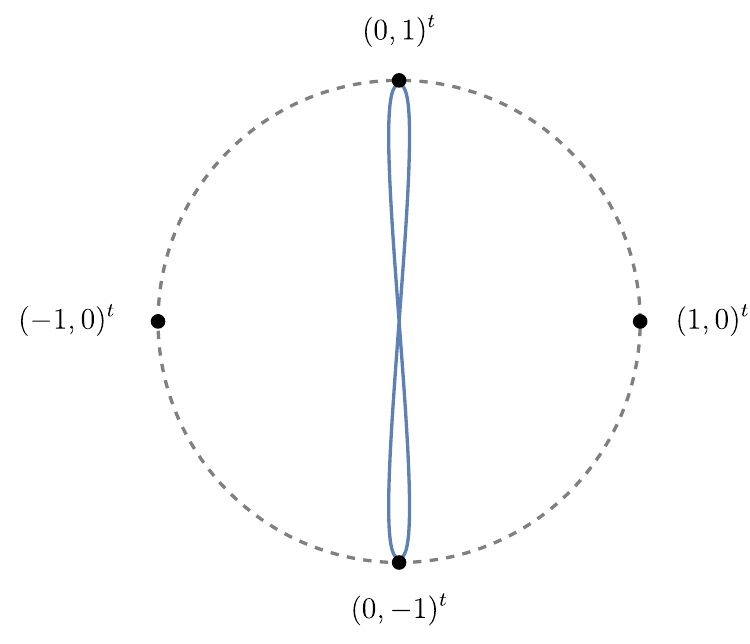}
    \end{subfigure}
    \caption{Plots of $\lambda$-figure-eights for $\lambda=\frac12$ (left) and $\lambda=\frac{1}{200}$ (right).}
    \label{fig:lamb-fig-eights}
\end{figure}

A discussion for a corresponding class of curves for the clamped flow containing the initial data constructed in \cite[Section 5]{schlierf2023} is provided in \cref{sec:vert-cla}.

\subsection{Willmore flow: Quantitative relations and qualitative deviations}

Another, isometric description for the hyperbolic plane is given by the half-plane model $\H^2=\{(u^{(1)},u^{(2)})^t\in\R^2:u^{(2)}>0\}$ with metric $g_u=\langle\cdot,\cdot\rangle_g=\frac{1}{(u^{(2)})^2}\langle\cdot,\cdot\rangle$. Let $u\colon I\to\H^2$ be an immersion where $I$ is either an interval or $\S^1$. The surface of revolution $f_u$ canonically associated to the profile curve $u$ is defined on $\Sigma=I\times\S^1$ by $f_u\colon\Sigma\to\R^3$ where 
\begin{equation}\label{eq:def-surf-rev}
    f(x,\varphi)= (u^{(1)}(x),u^{(2)}(x)\cos(\varphi),u^{(2)}(x)\sin(\varphi))^t.
\end{equation}
Consider an immersion $f\colon \Sigma'\to \R^3$ of an orientable surface $\Sigma'$ with not necessarily empty boundary. Then $H$ denotes the mean curvature, i.e. the arithmetic mean of the principal curvatures of $f$, and $\mu_f$ the area-element induced by the metric $f^*\langle\cdot,\cdot\rangle$ on $\Sigma'$. Its \emph{Willmore energy} is defined by $\W(f)=\int_{\Sigma'} H^2\dd \mu_f$. The following fundamental relation between $\W(f_u)$ and $\E(u)$ goes back to \cite{bryantgriffiths1986}. In its cited form, it can be found in \cite[Equation (2.11)]{eichmanngrunau2019}. Writing $|\cdot|$ for the Euclidean norm in $\R^2$, one has
\begin{equation}\label{eq:br-gr}
    \frac{2}{\pi}\W(f_u) = \E(u) - 4\frac{\partial_xu^{(2)}}{|\partial_xu|}\Big|_{\partial I}.
\end{equation}

While \cref{eq:br-gr} already quantifies the relation between the hyperbolic elastic energy of a profile curve and the Willmore energy of its surface of revolution, there also is a direct relation between the associated $L^2$ gradients --- and thus between the elastic flow and the famous Willmore flow. More precisely, by computations in \cite[Theorem 4.1]{dallacquaspener2018}, one has that, if $u\colon [0,T)\times I\to\H^2$ satisfies
\begin{equation}\label{eq:wf-vs-ef}
    \partial_t u = -\frac{1}{4(u^{(2)})^4} \nabla\E(u),
\end{equation}
then the associated surfaces of revolution satisfy the Willmore flow equation
\begin{equation}
    \partial_t f_u = - \nabla\W(f_u).
\end{equation}
So solving the Willmore flow starting in a surface of revolution equates solving \cref{eq:wf-vs-ef} which obviously is strongly related to solving the elastic flow.

Consider now the aforementioned $\lambda$-figure-eights constructed in \cite{muellerspener2020}. These are used in \cite{dallacquamullerschatzlespener2020} as initial data for the Willmore flow \cref{eq:wf-vs-ef}. The authors then show that the Willmore flow starting in those figure-eights also develops singularities.

Now our result in \cref{thm:sf8-conv} translated to the half-plane model $\H^2$ yields that the solution to the elastic flow starting in one such symmetric figure-eight converges (in a suitable sense) to a double-parametrization of the vertical geodesic with trace $\{(0,y)^t:y>0\}$. In terms of the associated surfaces of revolution, one thus gets convergence to a plane (in a suitable sense), cf. \Cref{fig:wf-vs-ef}.

However, numerical simulations in \cite[Appendix~B, Figures~B.2 -- B.4]{barrettgarckenuernberg2021} suggest that the solution to \eqref{eq:wf-vs-ef}, i.e. the Willmore flow, asymptotically resembles the left illustration in \Cref{fig:wf-vs-ef}. So the Willmore flow stays bounded in Euclidean space, while the elastic flow becomes unbounded and loses all of its Willmore energy in the limit! Furthermore, for the elastic flow, global existence is always known while, for the Willmore flow, it is only conjectured \cite[p.~883]{barrettgarckenuernberg2021} but generally not known.

\begin{figure}[h!]
    \centering
    \begin{subfigure}{0.45\textwidth}
        \includegraphics[width=\linewidth]{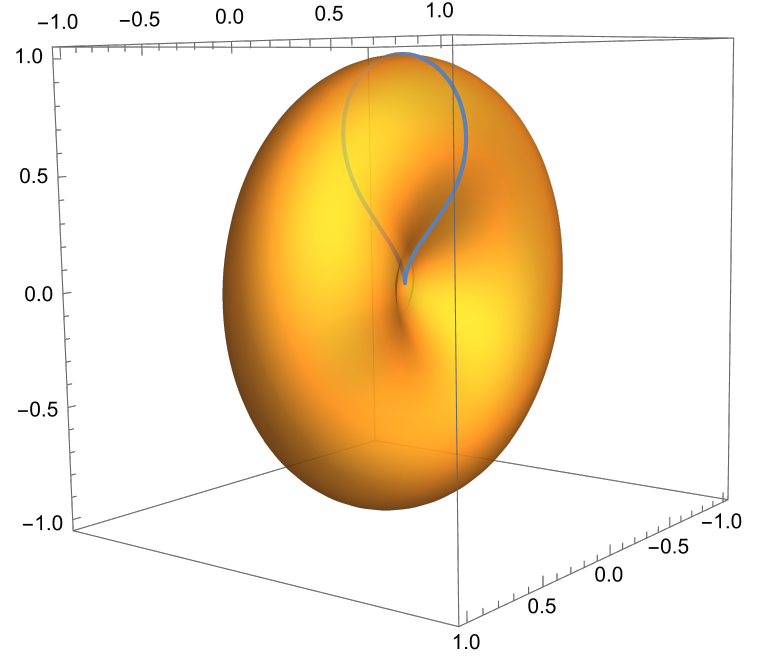}
    \end{subfigure}
    \qquad
    \begin{subfigure}{0.45\textwidth}
        \includegraphics[width=\linewidth]{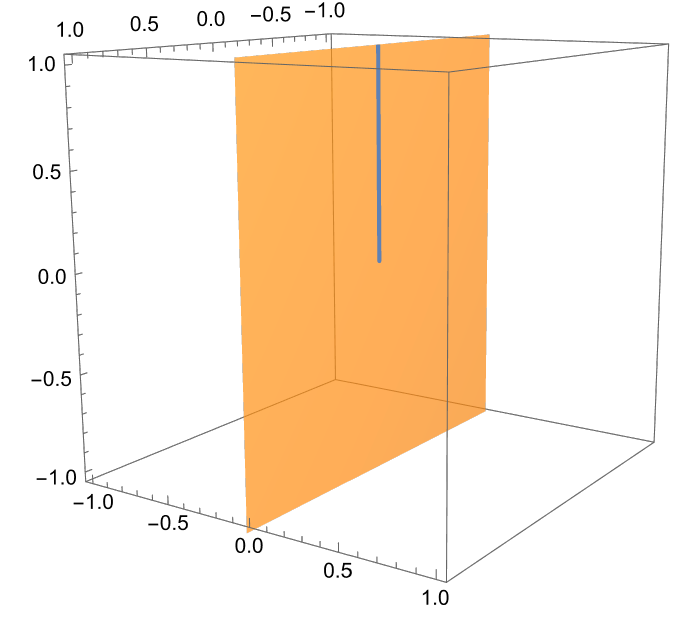}
    \end{subfigure}
    \caption{Surfaces of revolution one obtains as limits of Willmore flow (left) and elastic flow (right).}
    \label{fig:wf-vs-ef}
\end{figure}

\subsection{Outline and structure of the proofs}

\cref{sec:geo-prelim} first reviews some basic geometric preliminaries on the nature of the isometry between $\H^2$ and $\D^2$ as well as some fundamental relations on curvature and the elastic energy in the hyperbolic plane. Then, the fundamental estimates for proving the main results are developed. Namely, in \cref{subsec:en-est}, a general quantification result is proved in $\H^2$ which is the crucial step in showing that $\{|\widetilde{\gamma_{\infty}}|=1\}$, i.e. the set of singularities, is finite in \cref{thm:quan-clo,thm:quan-cla}. Furthermore, in \cref{subsec:len-est}, a relation between approaching ``hyperbolic infinity'', i.e. $\partial \D^2\subseteq\R^2$, the Euclidean length and the hyperbolic elastic energy is made. This turns out to be crucial in lifting the quantification result to the Poincar\'e disk $\D^2$.

With these preliminaries, \cref{sec:proof-main} is devoted to proving \cref{thm:quan-clo,thm:quan-cla,prop:bu-clo}. However, in the interest of brevity, we first provide some meta-results on compactness and on boundedness of the Euclidean length for solutions to \cref{eq:clo-ef,eq:cla-ef}.

In \cref{sec:fig-eights}, \cref{thm:sf8-conv} is proved while a corresponding construction and analysis for the clamped elastic flow is carried out in \cref{sec:vert-cla}. 

\section{Geometric preliminaries}\label{sec:geo-prelim}

\subsection{Curves in the hyperbolic plane: half-plane vs. Poincar\'e model}

Throughout this article, two isometric descriptions of the hyperbolic plane are considered. Recall that, by $\H^2=\{(u^{(1)},u^{(2)})^t\in\R^2:u^{(2)}>0\}$ we denote the \emph{half-plane} model and by $\D^2=\{(p^{(1)},p^{(2)})^t\in\R^2:(p^{(1)})^2+(p^{(2)})^2<1\}$ the \emph{Poincar\'e} model. 

In the half-plane model, the metric $g$ at a point $u=(u^{(1)},u^{(2)})^t\in\H^2$ is given by $\langle\cdot,\cdot\rangle_g=\frac{1}{(u^{(2)})^2}\langle\cdot,\cdot\rangle$. In the Poincar\'e model, the metric $g$ at a point $p\in\D^2$ is defined by $\langle\cdot,\cdot\rangle_g = \frac{4}{(1-|p|^2)^2}\langle\cdot,\cdot\rangle$. Here, $|\cdot|$ and $\langle\cdot,\cdot\rangle$ refer to the Euclidean norm and scalar product in $\R^2$, respectively. An isometry $\Phi\colon\D^2\to\H^2$ is given by 
\begin{equation}\label{eq:def-Phi}
    \Phi(p^{(1)},p^{(2)}) = \frac{1}{(p^{(1)})^2+(p^{(2)}-1)^2} \big(2p^{(1)},1-(p^{(1)})^2-(p^{(2)})^2\big)^t
\end{equation}
while its inverse satisfies 
\begin{equation}\label{eq:inv-Phi}
    \Phi^{-1}(u^{(1)},u^{(2)}) = \frac{1}{(u^{(1)})^2+(u^{(2)}+1)^2} \big( 2u^{(1)}, (u^{(1)})^2+(u^{(2)})^2-1 \big)^t.
\end{equation}
In particular, for its differential $D\Phi(p^{(1)},p^{(2)})$, one gets the expression
\begin{equation}\label{eq:Jacobi-Phi}
    \frac{2}{((p^{(1)})^2+(p^{(2)}-1)^2)^2}\begin{pmatrix}
        (p^{(2)}-1)^2-(p^{(1)})^2 & -2
          p^{(1)} (p^{(2)}-1) \\
        2 p^{(1)} (p^{(2)}-1) & (p^{(2)}-1)^2-(p^{(1)})^2 
       \end{pmatrix}
\end{equation}
and, using that $\Phi$ is an isometry, for any curve $\gamma\colon I\to\D^2$ and $u=\Phi\circ\gamma\colon I\to\H^2$, one has $|\partial_x\gamma(x)|_g = |\partial_x u(x)|_g$ so that, by \eqref{eq:def-Phi}, 
\begin{equation}\label{eq:vel-u-gamma}
    |\partial_x u| = u^{(2)} |\partial_xu|_g = u^{(2)} |\partial_x\gamma|_g = \frac{1-|\gamma|^2}{|\gamma-(0,1)^t|^2} \frac{2|\partial_x\gamma|}{1-|\gamma|^2} = \frac{2}{|\gamma-(0,1)^t|^2}|\partial_x\gamma|.
\end{equation}
Moreover, one immediately finds
\begin{equation}\label{eq:hyp-length}
    \Ll_{\D^2}(\gamma) = \int_I |\partial_x\gamma|_g\dd x = \int_I |\partial_xu|_g\dd x = \Ll_{\H^2}(u)
\end{equation}
and we refer to the common value as the \emph{hyperbolic length} of $\gamma$ or $u$.

The elastic flow of curves in the hyperbolic plane has already been studied in several previous works where the half-plane model $\H^2$ was always used to describe the hyperbolic plane. The half-plane approach turns out to be quite well-suited when studying the flow in the energy regime where one has convergence --- i.e., where no singularities occur. However, as it turns out, when studying singularities of the elastic flow, the Poincar\'e model is better suited for describing the hyperbolic plane. Indeed, in the Poincar\'e model, one does not distinguish between different ``types of $\infty$''. That is, hyperbolic distances blow-up in $\D^2$ if one approaches the boundary of $\D^2$, i.e. $\S^1\subseteq\R^2$. However, in the half-plane model, there are two natural ``types of $\infty$''. Namely, approaching the axis $\{(u^{(1)},0)^t:u^{(1)}\in\R\}$ and approaching infinity in the Euclidean sense. However, in hyperbolic geometry, those two ``types'' are equivalent. Thus, it simplifies the subsequent discussions to work in the Poincar\'e model $\D^2$.

Let now $M$ be either $\H^2$ or $\D^2$ and $I$ either a compact real interval or $\S^1$. Denote by $\nabla$ covariant differentiation and by $\Gamma^{k}_{ij}$ the Christoffel symbols in $M$. Consider an immersion $\gamma\colon I\to M$. Let $V$ be a smooth vector field in $M$ along $\gamma$. Writing $e_1=(1,0)^t$ and $e_2=(0,1)^t$, we define
\begin{equation}\label{eq:cov-der}
    \nabla_xV = \nabla_{\partial_x\gamma} V = \partial_x V + \sum_{i,j,k=1}^2 (\Gamma^k_{ij}\circ\gamma) \partial_x\gamma^{(i)} V^{(j)} e_k.
\end{equation}
For convenience, we also write $\nabla_s$ instead of $\nabla_{\partial_s\gamma}$. Furthermore, for a vector field $V$ in $M$ along $\gamma$, we write $V^{\bot}=V-\langle V,\partial_s\gamma\rangle_g\partial_s\gamma$. In order to make sense of a signed curvature function, define the normal field 
\begin{equation}
    \vec{n} = \begin{pmatrix}
        0& -1\\
        1&0\\
    \end{pmatrix} \cdot \partial_s\gamma.
\end{equation}
Then $\scurv = \langle\curv,\vec{n}\rangle_g$ denotes the signed curvature of $\gamma$ where $\curv=\nabla_s\partial_s\gamma$. 

\begin{remark}
    As in \cite[Equation (2.9)]{eichmanngrunau2019}, one explicitly computes in the case where $M=\H^2$ and $u=\gamma$ that
    \begin{equation}\label{eq:curv-formula}
        \scurv = \frac{\partial_x^2u^{(2)}\partial_xu^{(1)}u^{(2)}-\partial_x^2u^{(1)}\partial_xu^{(2)}u^{(2)}+\partial_xu^{(1)}|\partial_xu|^2}{|\partial_xu|^3}.
    \end{equation}
\end{remark}
With the computations in \cite[Remark 2.5]{dallacquaspener2017}, one finds
\begin{equation}\label{eq:first-var}
    \frac{\dd}{\dd t} \E(\gamma+tV) \Big|_{t=0} = \int_I \langle 2({\nabla_s^{\bot}})^2\curv+|\curv|_g^2\curv - 2\curv,V\rangle_g\dd s =\vcentcolon \int_I \langle \nabla\E(\gamma),V\rangle_g\dd s 
\end{equation}
where $V\colon I\to\R^2$ is smooth and, if $I\neq \S^1$, $V=0$ and $\nabla_s^{\bot}V=0$ on $\partial I$.

The following properties of immersions in the hyperbolic plane are elementary and their proof is given in the appendix for the reader's convenience.

\begin{lemma}\label{lem:basic-facts}
    One has the following.
    \begin{enumerate}[(a)]
        \item For any immersion $\gamma\colon[a,b)\to\D^2$ with $|\gamma(x)|\to 1$ for $x\nearrow b$, we have $\Ll_{\D^2}(\gamma)=\infty$.
        \item If $\gamma\colon I\to \D^2$ is an immersion where $I$ is an interval or $\S^1$, then $\Ll_{\D^2}(\gamma)\geq2\Ll_{\R^2}(\gamma)$.
    \end{enumerate} 
\end{lemma}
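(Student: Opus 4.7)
The plan for \textbf{(b)} is essentially immediate from the explicit form of the Poincar\'e metric: since $|\gamma(x)|<1$ for all $x\in I$, one has $\frac{2}{1-|\gamma(x)|^2}\geq 2$, so that
\begin{equation*}
    \Ll_{\D^2}(\gamma) = \int_I \frac{2|\partial_x\gamma|}{1-|\gamma|^2}\dd x \geq 2\int_I |\partial_x\gamma|\dd x = 2\Ll_{\R^2}(\gamma).
\end{equation*}

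For \textbf{(a)}, the plan is to reduce matters to a one-dimensional total-variation argument in the radial variable. Set $r(x)\defeq|\gamma(x)|$, so that $r\colon [a,b)\to[0,1)$ is locally Lipschitz with $|r'(x)|\leq|\partial_x\gamma(x)|$ almost everywhere. I introduce the radial primitive $f(\rho)\defeq\frac{1}{2}\log\frac{1+\rho}{1-\rho}$, which satisfies $f'(\rho)=\frac{1}{1-\rho^2}$ and $f(\rho)\to\infty$ as $\rho\nearrow 1$. Then one estimates
\begin{equation*}
    \Ll_{\D^2}(\gamma) = \int_a^b \frac{2|\partial_x\gamma|}{1-r^2}\dd x \geq 2\int_a^b \frac{|r'(x)|}{1-r(x)^2}\dd x = 2\int_a^b |(f\circ r)'(x)|\dd x.
\end{equation*}
The assumption $|\gamma(x)|\to 1$ for $x\nearrow b$ forces $(f\circ r)(x)\to\infty$, so for any $N>0$ one may pick $b'<b$ with $(f\circ r)(b')\geq (f\circ r)(a)+N$; this gives $\int_a^{b'}|(f\circ r)'|\dd x\geq N$, and sending $N\to\infty$ yields $\Ll_{\D^2}(\gamma)=\infty$.

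The only subtlety is that $r=|\gamma|$ need not be monotone in $x$, so one genuinely needs the total-variation bound $\int|g'|\geq|g(b')-g(a)|$ rather than a telescoping identity; this is exactly why passing through the primitive $f$ is convenient. Apart from this minor point, both claims are elementary consequences of the explicit form of the metric, and I anticipate no serious obstacle.
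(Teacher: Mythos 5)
Your proposal is correct and follows essentially the same route as the paper: part (b) is identical, and for part (a) the paper likewise combines the fundamental theorem of calculus with a logarithmic primitive blowing up at the boundary (the paper uses $-\log(1-|\gamma|^2)$ together with Cauchy--Schwarz on $\langle\gamma,\partial_x\gamma\rangle$, which is the same total-variation argument as yours with the primitive $\tfrac12\log\tfrac{1+\rho}{1-\rho}$). No gaps.
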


\subsection{Energy estimates}\label{subsec:en-est}

Firstly we recall the following lemma.

\begin{lemma}[{\cite[Lemma 2.4]{schlierf2023}}]\label{lem:eich-grun}
    Let $I$ be a compact interval and let $u_n\colon I\to\H^2$ be immersions such that, for $\alpha>0$, $u_n^{(2)}\geq\alpha$ on $\partial I$ for each $n\in\N$. Furthermore, suppose that
    \begin{equation}\label{eq:eich-grun}
        \sup_{n\in\N} \E(u_n) < 8.
    \end{equation}
    Then there exists $c>0$ with $u_n^{(2)}\geq c$ for any $n\in\N$.
\end{lemma}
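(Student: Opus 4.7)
The plan is to argue by contradiction via a blow-up and compactness analysis. Assume the conclusion fails; after passing to a subsequence we have $\epsilon_n\defeq \min_I u_n^{(2)}\to 0$. Since $u_n^{(2)}\geq \alpha$ on $\partial I$, this minimum is attained at an interior point $x_n\in I^{\circ}$, so in particular $\partial_x u_n^{(2)}(x_n)=0$. Exploiting the $\H^2$-isometries of horizontal translation and hyperbolic dilation $u\mapsto \lambda u$, I would pass to the rescaled immersions $v_n\defeq (u_n-(u_n^{(1)}(x_n),0))/\epsilon_n$. These satisfy $v_n(x_n)=(0,1)$, $v_n^{(2)}\geq 1$ on $I$, $v_n^{(2)}|_{\partial I}\geq \alpha/\epsilon_n\to \infty$, and $\E(v_n)=\E(u_n)\leq E_0 <8$.

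Next, I would reparametrize $v_n$ by hyperbolic arc length, centered so that $x_n\mapsto 0$, and use the identity $\partial_s\log v_n^{(2)}=\sin\theta_n$ (where $\theta_n$ denotes the Euclidean angle of the tangent with the $u^{(1)}$-axis), together with $|\sin\theta_n|\leq 1$ and $\log v_n^{(2)}|_{\partial I}\geq \log(\alpha/\epsilon_n)\to\infty$, to conclude that the parameter intervals exhaust $\R$. The uniform bound $\E(v_n)\leq E_0$ combined with a standard $W^{2,2}_{\loc}$-compactness argument yields (along a further subsequence) a limit immersion $v_{\infty}\colon \R\to \H^2$ satisfying $v_{\infty}(0)=(0,1)$, $v_{\infty}^{(2)}\geq 1$, and $\E(v_{\infty})\leq E_0<8$ by lower semicontinuity. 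The classification of hyperbolic elastica from~\cite{langersinger1984}, coupled with $v_{\infty}^{(2)}\geq 1$ (which excludes accumulation to the axis $\{u^{(2)}=0\}$), is expected to force $v_{\infty}$ to be asymptotic to vertical geodesics at both ends, so that $v_{\infty}^{(2)}(s)\to \infty$ and $\theta_{\infty}(s)\to \pm \pi/2$ according to the sign of $s$.

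The contradiction then follows from the Bryant--Griffiths identity \cref{eq:br-gr}: applying it to $v_{\infty}$ on $[-R,R]$ and using $\W(f_{v_{\infty}})\geq 0$,
\begin{equation*}
    \E(v_{\infty}|_{[-R,R]}) \geq 4\bigl(\sin\theta_{\infty}(R) - \sin\theta_{\infty}(-R)\bigr) \xrightarrow{R\to\infty} 4\bigl(1-(-1)\bigr) = 8,
\end{equation*}
whence $\E(v_{\infty})\geq 8$, contradicting $\E(v_{\infty})\leq E_0<8$. The main obstacle will be justifying the asymptotic behavior $\theta_{\infty}\to\pm\pi/2$ rigorously; this requires combining the finite energy bound $\int|\curv_{\infty}|_g^2\dd s<\infty$ with the ODE classification of complete hyperbolic elastica to rule out oscillatory behavior and semi-circular asymptotes. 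Once that is in place, the sharp threshold $8$ in the hypothesis corresponds precisely to the energy of an asymptotically geodesic elastica escaping to hyperbolic infinity, so the strict inequality $\E(u_n)<8$ is exactly what is needed for the contradiction.
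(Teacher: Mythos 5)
First, note that the paper does not prove this lemma itself but imports it verbatim from \cite[Lemma~2.4]{schlierf2023}, so your argument can only be judged on its own merits. Your overall strategy (normalize by hyperbolic isometries, pass to a limit, and derive a contradiction from \eqref{eq:br-gr} together with $\W\geq 0$) is in the right spirit, and several steps are correct: the rescaling, the identity $\partial_s\log v_n^{(2)}=\sin\theta_n$, the fact that the arc-length intervals exhaust $\R$, and the final inequality $\E(v_\infty|_{[-R,R]})\geq 4(\sin\theta_\infty(R)-\sin\theta_\infty(-R))$. However, there is a genuine gap at exactly the step you yourself flag as ``the main obstacle'': the limit $v_\infty$ is \emph{not} an elastica. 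The $u_n$ are arbitrary immersions with an energy bound; they satisfy no Euler--Lagrange equation, so neither does the $W^{2,2}_{\mathrm{loc}}$-weak limit. Consequently the Langer--Singer classification \cite{langersinger1984} and any ``ODE classification of complete hyperbolic elastica'' are simply inapplicable, and the asserted asymptotics $\theta_\infty(s)\to\pm\pi/2$ --- which is the entire content of the lemma, since it is what produces the number $8$ --- is left without proof. Note also that you cannot even assume $v_\infty^{(2)}(s)\to\infty$ as $s\to\pm\infty$: you only control $v_n^{(2)}$ at the endpoints of the parameter interval, and $v_n^{(2)}$ may dip back toward its minimum value arbitrarily far from $s=0$.

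The gap can be closed, and in fact more directly, without any blow-up. Applying \eqref{eq:br-gr} with $\W\geq0$ to arbitrary sub-arcs shows that $\sin\theta_n$ can increase by at most $\tfrac14\sup_m\E(u_m)<2$ in total; moreover, writing $\scurv=\partial_s\theta+\cos\theta$ and applying Cauchy--Schwarz to $\int\scurv\cos\theta\dd s=[\sin\theta]+\int\cos^2\theta\dd s$ yields a bound $\int_I\cos^2\theta_n\dd s\leq C$ depending only on the energy bound. Since the descent from height $\alpha$ at $\partial I$ to height $\min_I u_n^{(2)}=\varepsilon_n$ forces $\int_{\{\sin\theta_n<0\}\cap[a,x_n]}|\sin\theta_n|\dd s\geq\log(\alpha/\varepsilon_n)$, the set where the curve descends has hyperbolic length tending to $\infty$, while the set where $\cos^2\theta_n\geq\eta$ has length at most $C/\eta$; hence for $\varepsilon_n$ small there is a point $s_1<x_n$ with $\sin\theta_n(s_1)\leq-1+\eta$, and symmetrically a point $s_2>x_n$ with $\sin\theta_n(s_2)\geq1-\eta$. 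Then $\E(u_n)\geq\E(u_n|_{[s_1,s_2]})\geq 4(\sin\theta_n(s_2)-\sin\theta_n(s_1))\geq 8-8\eta$, which contradicts \eqref{eq:eich-grun} once $\eta$ is chosen small enough. This quantitative argument is what your appeal to the elastica classification must be replaced by; as written, your proof does not go through.
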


In view of the previous section, we reformulate a contraposition to \cref{lem:eich-grun} in the setting of the Poincar\'e model $\D^2$ for the hyperbolic plane.

\begin{lemma}\label{rem:eich-grun-poin}
    Let $I$ be a compact interval. Consider immersions $\gamma_n\colon I\to\D^2$ such that, for some $\alpha\in (0,1)$, $|\gamma_n(y)|\leq 1-\alpha$ for all $y\in\partial I$ and $n\in\N$. Then
    \begin{equation}
        \limsup_{n\to\infty}\ (\max_{I}|\gamma_n|) = 1 \implies \limsup_{n\to\infty}\E(\gamma_n)\geq 8.
    \end{equation}
\end{lemma}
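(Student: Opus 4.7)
The plan is to argue by contradiction, transferring everything to $\H^2$ and invoking \cref{lem:eich-grun}. Suppose for contradiction that $\limsup_{n\to\infty}\E(\gamma_n)<8$; after passing to a subsequence we may assume $\sup_n\E(\gamma_n)<8$ and $\max_I|\gamma_n|\to 1$. Choose $x_n\in I$ with $|\gamma_n(x_n)|=\max_I|\gamma_n|$ and, passing to a further subsequence, $\gamma_n(x_n)\to p_0\in\R^2$ with $|p_0|=1$.

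The natural attempt is to set $u_n=\Phi\circ\gamma_n$ and apply \cref{lem:eich-grun} in $\H^2$. The only obstruction is the single boundary point $(0,1)^t\in\partial\D^2$, which is the preimage of ``infinity'' under $\Phi$ and around which $\Phi$ fails to extend continuously; everywhere else, formula \eqref{eq:def-Phi} extends continuously to $\overline{\D^2}\setminus\{(0,1)^t\}$ and maps $\partial\D^2\setminus\{(0,1)^t\}$ into the axis $\{u^{(2)}=0\}$. This is what the proof will exploit.

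To avoid this obstruction, I would first rotate $\D^2$: pick $R\in O(2)$ (an isometry of $\D^2$) such that $Rp_0\neq(0,1)^t$, which is possible since only one direction is excluded. Then set $u_n\defeq\Phi\circ R\circ\gamma_n\colon I\to\H^2$. Since $R$ and $\Phi$ are both isometries between the respective models, $\E(u_n)=\E(\gamma_n)<8$. For the boundary condition of \cref{lem:eich-grun}, note that the endpoints $R\gamma_n(\partial I)$ lie in the compact set $K=\{|p|\leq 1-\alpha\}\subseteq\D^2\setminus\{(0,1)^t\}$, on which $\Phi$ is continuous with strictly positive second component; this yields a uniform constant $c_1>0$ with $u_n^{(2)}|_{\partial I}\geq c_1$. \cref{lem:eich-grun} then produces $c>0$ with $u_n^{(2)}\geq c$ on all of $I$, for every $n$.

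To derive the contradiction, evaluate at $x_n$: by continuity of $\Phi$ at $Rp_0\neq(0,1)^t$,
\begin{equation}
    u_n^{(2)}(x_n)\ \longrightarrow\ \Phi(Rp_0)^{(2)}\ =\ \frac{1-|Rp_0|^2}{|Rp_0-(0,1)^t|^2}\ =\ 0,
\end{equation}
since $|Rp_0|=1$ while the denominator stays positive. This contradicts $u_n^{(2)}\geq c>0$ and closes the argument. The main (and really only) point requiring care is the rotation step: without it, $p_0$ could coincide with $(0,1)^t$ and both the endpoint bound $u_n^{(2)}|_{\partial I}\geq c_1$ and the limit computation at $x_n$ would break down.
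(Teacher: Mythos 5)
Your argument is correct and is essentially the paper's own proof: both pass to a subsequence, rotate so that the boundary point being approached is not the singular point $(0,1)^t$ of $\Phi$ (the paper normalizes it to $(0,-1)^t$), transfer to $\H^2$ via $u_n=\Phi\circ\gamma_n$, verify the uniform lower bound for $u_n^{(2)}$ on $\partial I$ from the hypothesis $|\gamma_n|\leq 1-\alpha$ there, and conclude from \cref{lem:eich-grun} together with $u_n^{(2)}(x_n)\to 0$. The only difference is presentational (contradiction versus contraposition), so nothing further is needed.
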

\begin{proof}
    Without relabeling, consider any subsequence with $\max_I|\gamma_n|\to 1$ for $n\to\infty$. Fix any sequence of points $(p_n)_{n\in\N}$ with $p_n\in \gamma_n(I)$ and $|p_n|\to 1$. After passing to a further subsequence, we can arrange without loss of generality $p_n\to p_{\infty}\in\partial\D^2$. Again without loss of generality, by applying a rotation to each $\gamma_n$ (which is an isometry of $\D^2$ and thus does not change the elastic energy), we may assume that $p_{\infty}=(0,-1)^t$.

    Define $u_n=\Phi\circ\gamma_n$ with $\Phi$ as in \cref{eq:def-Phi}. Then, for $(x_n)_{n\in\N}\subseteq I$ with $\gamma_n(x_n)=p_n$,
    \begin{equation}
        u_n^{(2)}(x_n) = \Phi(p_n)^{(2)} = \frac{1-|p_n|^2}{|p_n-(0,1)^t|^2} \to \frac04=0
    \end{equation}
    for $n\to\infty$. Moreover, for $y\in\partial I$ and $n\in\N$, 
    \begin{equation}
        u_n^{(2)}(y) = \frac{1-|\gamma_n(y)|^2}{|\gamma_n(y)-(0,1)^t|^2} \geq \frac{\alpha(2-\alpha)}{4}
    \end{equation}
    by assumption. By \cref{lem:eich-grun} applied to the (sub-)sequence $(u_n)_{n\in\N}$,
    \begin{equation}
        \sup_{n\in\N}\E(\gamma_n) = \sup_{n\in\N}\E(u_n) \geq 8,
    \end{equation}
    and the claim follows.
\end{proof}

Next, using the special view on curves in $\H^2$ given by considering their surface of revolution, we deduce the following fundamental estimate from some explicit computations and the Gauss-Bonnet theorem.

\begin{lemma}
    Consider a compact real interval $I$ and an immersion $u\colon I\to\H^2$. Then
    \begin{equation}\label{eq:est-dxu1}
        \int_I \frac{(\partial_xu^{(1)})^2}{|\partial_xu|u^{(2)}}\dd x \leq \E(u) + 4.
    \end{equation}
\end{lemma}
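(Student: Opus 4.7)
The plan is to pass to the surface of revolution $f_u\colon I\times\S^1\to\R^3$ generated by $u$ and rewrite the integrand as the square of one of its principal curvatures. Parametrizing the profile $u$ by Euclidean arc length (writing $\rho=u^{(2)}$) one has that $f_u$ is given, up to rotation, by a warped product with metric $ds^2+\rho^2\dd\varphi^2$ and unit outward normal in $\R^3$ leading to two principal curvatures: the ``meridian'' $k_m$, which equals the signed Euclidean curvature of the profile, and the ``parallel'' $k_p=\partial_su^{(1)}/\rho$. Since $d\mu_{f_u}=\rho\dd s\dd\varphi$, the quantity we wish to estimate is precisely
\begin{equation}
    \int_I\frac{(\partial_xu^{(1)})^2}{|\partial_xu|u^{(2)}}\dd x=\int_I\frac{(\partial_su^{(1)})^2}{\rho}\dd s=\frac{1}{2\pi}\int_\Sigma k_p^2\dd\mu_{f_u}.
\end{equation}

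Next I would use the elementary pointwise inequality $k_p^2\leq k_p^2+k_m^2=(k_m+k_p)^2-2k_mk_p=4H^2-2K$, so that
\begin{equation}
    \int_I\frac{(\partial_xu^{(1)})^2}{|\partial_xu|u^{(2)}}\dd x\leq \frac{2}{\pi}\W(f_u)-\frac{1}{\pi}\int_\Sigma K\dd\mu_{f_u}.
\end{equation}
For the first term, the Bryant--Griffiths identity \cref{eq:br-gr} gives $\frac{2}{\pi}\W(f_u)=\E(u)-4\big[\partial_xu^{(2)}/|\partial_xu|\big]_{\partial I}$. For the second, Gauss--Bonnet on the cylinder $\Sigma=I\times\S^1$ (which has $\chi(\Sigma)=0$) together with an explicit computation of the geodesic curvature of the boundary parallels in the warped product $ds^2+\rho^2\dd\varphi^2$ gives $\int_\Sigma K\dd\mu_{f_u}=-2\pi\big[\partial_xu^{(2)}/|\partial_xu|\big]_{\partial I}$. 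Substituting both, the coefficient $-4$ from Bryant--Griffiths and the $+2$ coming from the Gauss--Bonnet term partially cancel, leaving
\begin{equation}
    \int_I\frac{(\partial_xu^{(1)})^2}{|\partial_xu|u^{(2)}}\dd x\leq \E(u)-2\left[\frac{\partial_xu^{(2)}}{|\partial_xu|}\right]_{\partial I}.
\end{equation}

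Finally, since $\partial_xu^{(2)}/|\partial_xu|$ is a component of a Euclidean unit vector, it lies in $[-1,1]$ at each endpoint, so the boundary trace is bounded by $2$ in absolute value; this yields the claimed bound $\E(u)+4$. The only mildly delicate point will be double-checking sign conventions in the Gauss--Bonnet step (orientation of the two boundary parallels and the inward versus outward normal) so that the boundary contribution from $\int K\,\dd\mu_{f_u}$ comes out with the correct sign to partially cancel the Bryant--Griffiths boundary term; once the bookkeeping is straightened out, the estimate is immediate.
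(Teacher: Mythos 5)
Your argument is correct and is essentially the paper's proof with different bookkeeping: the pointwise inequality $k_p^2\le k_m^2+k_p^2=4H^2-2K$ is exactly the paper's step of discarding the square of the meridian term in the expansion of $\scurv^2=(u^{(2)})^2(k_p-k_m)^2$, and both arguments then control the Gaussian-curvature contribution via Gauss--Bonnet on the cylinder $\Sigma=I\times\S^1$ together with $|\partial_su^{(2)}|\le 1$ at the two boundary parallels. The only cosmetic difference is that you detour through $\W(f_u)$ and \cref{eq:br-gr} and evaluate the Gauss--Bonnet boundary term exactly --- your signs do check out, since $\int_\Sigma K\dd\mu_{f_u}=-2\pi\int_I\partial_s^2u^{(2)}\dd s=-2\pi\bigl[\partial_su^{(2)}\bigr]_{\partial I}$ --- whereas the paper works with $\scurv^2$ directly and only needs the one-sided bound $\int_\Sigma K\dd\mu_{f_u}\le 4\pi$.
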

\begin{proof}
    Without loss of generality, suppose that $|\partial_xu|\equiv 1$. Write $\Sigma=I\times \S^1$ and $f_u\colon \Sigma\to\R^3$ for the associated surface of revolution. Note that its Euler characteristic is given by $\chi(\Sigma)=0$. Therefore, since $\partial \Sigma$ consists of two round circles, the Gauss-Bonnet theorem (cf. \cite[Theorem 2 on p.104]{docarmo1994}) yields
    \begin{equation}\label{eq:gau-bon}
        \int_{\Sigma} K\dd \mu_{f_u} \leq 4\pi.
    \end{equation}
    Moreover, by direct computation (cf. \cite[p.10]{dallacquaspener2018}), one has that the Gaussian curvature is given by
    \begin{equation}\label{eq:gauss-curv}
        K = (\partial_x^2u^{(1)}\partial_xu^{(2)}-\partial_x^2u^{(2)}\partial_xu^{(1)})\frac{\partial_xu^{(1)}}{u^{(2)}}.
    \end{equation}
    Therefore, using \cref{eq:curv-formula} and $|\partial_xu|\equiv 1$,
    \begin{align}
        \E(u) &= \int_I (\partial_x^2u^{(2)}\partial_xu^{(1)}u^{(2)}-\partial_xu^{(2)}\partial_x^2u^{(1)}u^{(2)}+\partial_xu^{(1)})^2\frac{1}{u^{(2)}}\dd x \\
        &\geq \int_I \frac{(\partial_xu^{(1)})^2}{u^{(2)}}\dd x + 2 \int_I (\partial_x^2u^{(2)}\partial_xu^{(1)}-\partial_x^2u^{(1)}\partial_xu^{(2)})\partial_xu^{(1)}\dd x\\
        &= \int_I \frac{(\partial_xu^{(1)})^2}{u^{(2)}}\dd x - 2 \int_I K\cdot u^{(2)}\dd x = \int_I \frac{(\partial_xu^{(1)})^2}{u^{(2)}}\dd x - \frac{1}{\pi} \int_{\Sigma} K\dd \mu_{f_u},
    \end{align}
    using $\dd\mu_{f_u}=u^{(2)}\dd x\otimes \dd\varphi$. Now \cref{eq:est-dxu1} follows by \cref{eq:gau-bon}.
\end{proof}

The above estimate is crucial in the following quantization theorem. More precisely, \cref{cor:ch-ve} quantifies how a sequence of immersions can approach ``hyperbolic infinity'' in terms of a bound on the elastic energy. Clearly, this understanding is crucial if one aims to better understand the behavior of singularities of curve evolutions in $\H^2$. Its proof uses a Choksi-Veneroni-type trick, cf. \cite[Step II in proof of Proposition 2]{choksiveneroni2013}. 

\begin{theorem}\label{cor:ch-ve}
    Consider immersions $u_n\colon I\to\H^2$ on a compact interval $I\subseteq\R$ and suppose that $0<\ell\leq |\partial_x u_n|\leq L<\infty$. If $(|\partial_xu_n|)_{n\in\N}$ converges uniformly on $I$,
    \begin{equation}
        u_n\rightharpoonup^*u_{\infty}\text{ in $W^{1,\infty}(I)$}\quad\text{and}\quad u_n\to u_{\infty}\text{ uniformly on $I$}
    \end{equation}
    and
    \begin{equation}\label{eq:ch-ve-1}
        \sup_{n\in\N}\E(u_n) \leq E\cdot 8 + \eta\quad\text{for some $E\in\N_0$ and $\eta<8$},
    \end{equation}
    then $Z=\{u_{\infty}^{(2)}=0\}\cap I^{\circ}$ consists of at most $E$ points.
\end{theorem}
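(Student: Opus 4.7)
The plan is to argue by contradiction: assume $|Z|\geq E+1$ and derive $8\leq\eta$, contradicting $\eta<8$. The strategy is to isolate $E+1$ interior zeros of $u_\infty^{(2)}$ in disjoint closed sub-intervals $J_i\subset I^\circ$ whose endpoints still satisfy $u_\infty^{(2)}>0$, and then, on each $J_i$, extract an energy contribution of at least $8$ via the contrapositive of \cref{lem:eich-grun}. Summing the contributions yields $8(E+1)\leq\liminf_n\E(u_n)\leq 8E+\eta$, the contradiction.

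First I have to rule out the degenerate possibility that $u_\infty^{(2)}$ vanishes identically on some sub-interval $[a,b]\subset I$; otherwise the intervals $J_i$ may fail to exist. Here I would use the preceding lemma: if $u_\infty^{(2)}\equiv 0$ on $[a,b]$, then $\|u_n^{(2)}\|_{L^\infty([a,b])}\to 0$ uniformly, and \eqref{eq:est-dxu1} combined with $|\partial_x u_n|\leq L$ gives
\begin{equation}
\int_a^b (\partial_x u_n^{(1)})^2\dd x \leq L\|u_n^{(2)}\|_{L^\infty([a,b])}\,(\E(u_n)+4)\to 0,
\end{equation}
hence $\partial_x u_n^{(1)}\to 0$ in $L^2([a,b])$. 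Combined with the uniform limit $|\partial_x u_n|^2\to\beta^2$ where $\beta\geq\ell>0$, this forces $(\partial_x u_n^{(2)})^2\to\beta^2$ in $L^1([a,b])$, even though $u_n^{(2)}\to 0$ uniformly. So $u_n^{(2)}$ must oscillate increasingly rapidly on $[a,b]$, with many turning points where $\partial_x u_n^{(2)}=0$ and hence $|\partial_x u_n^{(1)}|\geq\ell$. Using the curvature formula \eqref{eq:curv-formula} and the identification $\scurv=u^{(2)}\scurv_{\mathrm{euc}}+\partial_x u^{(1)}/|\partial_x u|$, each turning contributes a uniformly positive amount to $\E(u_n)$, so the growing number of turnings drives $\E(u_n)\to\infty$, contradicting the energy bound.

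Once this degenerate case is excluded, the set $Z$ is contained in the closure of $\{u_\infty^{(2)}>0\}$ in the sense that between and around any $E+1$ chosen points of $Z$ there are points with $u_\infty^{(2)}>0$. I pick $z_1<\dots<z_{E+1}$ in $Z$ and construct pairwise disjoint $J_i=[a_i,b_i]\subset I^\circ$ with $z_i\in(a_i,b_i)$ and $\alpha\defeq\tfrac12\min_{i,\pm}u_\infty^{(2)}(a_i)>0$. Uniform convergence then yields $u_n^{(2)}\geq\alpha$ on $\partial J_i$ for $n\geq N$, while $u_n^{(2)}(z_i)\to 0$ gives $\min_{J_i}u_n^{(2)}\to 0$. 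If some $i$ had $\liminf_n\E(u_n|_{J_i})<8$, then along a subsequence $(u_{n_k})$ with $\sup_{k\geq K}\E(u_{n_k}|_{J_i})<8$, \cref{lem:eich-grun} would give a uniform lower bound $u_{n_k}^{(2)}\geq c>0$ on $J_i$, contradicting $\min_{J_i}u_{n_k}^{(2)}\to 0$. Hence $\liminf_n\E(u_n|_{J_i})\geq 8$ for every $i$.

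To conclude: by pairwise disjointness and super-additivity of $\liminf$ on finite sums,
\begin{equation}
8(E+1)\leq\sum_{i=1}^{E+1}\liminf_n\E(u_n|_{J_i})\leq\liminf_n\sum_{i=1}^{E+1}\E(u_n|_{J_i})\leq\liminf_n\E(u_n)\leq 8E+\eta,
\end{equation}
so $\eta\geq 8$, contradicting $\eta<8$. The main obstacle is the structural step ruling out an entire subinterval of zeros; formalizing the blowup of $\E(u_n)$ from the compulsory oscillations of $u_n^{(2)}$ near zero is the delicate technical point. Alternatively, one could translate to the associated surfaces of revolution via \eqref{eq:br-gr} and use a Gauss-Bonnet-type argument to detect the unavoidable Willmore energy cost of many near-axis passages, which is precisely the Choksi-Veneroni trick.
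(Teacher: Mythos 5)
Your overall architecture is right, and the second half of your argument --- isolating $E+1$ points of $Z$ in intervals $J_i$ with pairwise disjoint interiors and endpoints where $u_\infty^{(2)}>0$, extracting $\liminf_n\E(u_n|_{J_i})\geq 8$ from each via a subsequence argument with \cref{lem:eich-grun}, and summing by superadditivity of $\liminf$ --- is correct; it is in fact slightly more careful than the paper's own wording of this step, which sums the bounds $\sup_n\E(u_n|_{[x_j,x_{j+1}]})\geq 8$ over $j$ and strictly speaking needs exactly your subsequence/liminf formulation to justify the addition. The genuine gap is where you flag it yourself: excluding a nondegenerate interval $[a,b]$ on which $u_\infty^{(2)}\equiv 0$. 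Up to the deduction $\partial_xu_n^{(1)}\to 0$ in $L^2([a,b])$ from \eqref{eq:est-dxu1}, your argument is fine, but the remaining claim --- that each turning point of $u_n^{(2)}$ contributes a uniformly positive amount of energy and that the number of turnings blows up --- is asserted, not proved, and it is not true in the naive form stated. At a turning point one has $\partial_xu_n^{(2)}=0$, so the curve moves horizontally there; the energy quantum is not attached to the turning itself but to the transition between a turning point and a nearby point where the motion is nearly vertical. By \eqref{eq:br-gr} only the \emph{increases} of $\partial_xu_n^{(2)}/|\partial_xu_n|$ are directly paid for by $\E$ (the decreases are offset by the nonnegative Willmore term), and not every constant-sign component of $\{\partial_xu_n^{(2)}\neq 0\}$ contains a near-vertical point, so one must group the (possibly infinitely many) components into blocks and locate one good point per block by a pigeonhole on $\int|\partial_xu_n^{(1)}|\,\dd x$. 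This is precisely the content of \cref{cor:bu-len}, whose proof in \cref{app:el} takes a full page of bookkeeping; your sketch compresses it into one sentence.

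For comparison, the paper closes this step by a different mechanism, which is the actual Choksi--Veneroni trick you mention only as an alternative: \eqref{eq:br-gr} gives a bound on the total variation of $\partial_xu_n^{(2)}/|\partial_xu_n|$ that is uniform in $n$ (by additivity of $\E$ and $\W$ over partitions), hence strong $L^1$-convergence $\partial_xu_n^{(2)}\to\partial_xu_\infty^{(2)}$ along a subsequence; combined with $\partial_xu_n^{(1)}\to 0$ in $L^2(Z)$ and $|\partial_xu_n|\geq\ell$ this forces $|\partial_xu_\infty^{(2)}|\geq\ell$ a.e.\ on $Z$, whereas Rademacher's theorem applied to the Lipschitz function $u_\infty^{(2)}$, which attains its minimum at every point of $Z$, forces $\partial_xu_\infty^{(2)}=0$ a.e.\ on $Z$; hence $|Z|=0$ and every connected component of $Z$ is a singleton. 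Either route can be made to work, but as written your proposal leaves the decisive step unproved.
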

Already in \cite[Lemma~6]{choksiveneroni2013}, an upper bound for the number of elements of $\{u^{(2)}=0\}$ is obtained for a so-called \emph{generalized generator} $u\colon[-1,1]\to\R\times[0,\infty)$, see \cite[Definition~3]{choksiveneroni2013}. In comparison to \cite[Lemma~6]{choksiveneroni2013}, with \eqref{eq:ch-ve-1}, we obtain a new upper bound in the above theorem with a different proof and can allow for curves which are not necessarily parametrized by constant Euclidean speed.
\begin{proof}[Proof of \cref{cor:ch-ve}.]
    As a first step, we argue that $Z$ is made up of at most finitely many connected components $(I_j)_{j=1,\dots,{m}}$ for some $1\leq m\leq E+2$. Indeed, otherwise, there are points $a_1<\dots<a_{E+3}$ all belonging to different connected components of $Z$. In particular, there exist $x_1,\dots,x_{E+2}\in I^{\circ}\setminus Z$ with $a_j<x_j<a_{j+1}$ for all $j=1,\dots,E+2$. Specifically, using the uniform convergence $u_n\to u_{\infty}$, one has $u_n^{(2)}(x_j)\to u_{\infty}^{(2)}(x_j)>0$, $u_n^{(2)}(x_{j+1})\to u_{\infty}^{(2)}(x_{j+1})>0$  and $u_n^{(2)}(a_{j+1})\to u_{\infty}^{(2)}(a_{j+1})=0$ for all $j=1,\dots,E+1$, using the definition of $Z$. So \cref{lem:eich-grun} yields that $\sup_{n\in\N}\E(u_n|_{[x_j,x_{j+1}]})\geq 8$ for $j=1,\dots,E+1$, i.e., $\sup_{n\in\N}\E(u_n)\geq 8(E+1)$ which contradicts \cref{eq:ch-ve-1}. Therefore, $Z=\bigcup_{j=1}^{{m}}I_j$ for the connected components $(I_j)_{j=1,\dots,{m}}$ of $Z$ where $1\leq m\leq E+2$. We later argue that actually $m\leq E$. 
    
    Note that each $I_j$ is an interval as a connected subset of $\R$. So $|Z|=0$ if and only if all connected components contain exactly one point. In order to prove $|Z|=0$, we proceed as follows. For any interval $(a,b)\subseteq I$, using \cref{eq:br-gr},
    \begin{align}
        \Big| 4 \frac{\partial_xu_n^{(2)}}{|\partial_xu_n|}\Big|_a^b \Big| = \Big| \frac{2}{\pi}\W(f_{u_n|_{[a,b]}}) - \E(u_n|_{[a,b]}) \Big| \leq \frac{2}{\pi} \W(f_{u_n|_{[a,b]}}) + \E(u_n|_{[a,b]}).
    \end{align}
    Since $\W(f_{u_n})$ and $\E(u_n)$ are uniformly bounded in $n$ by \cref{eq:ch-ve-1} and \cref{eq:br-gr}, one thus obtains that the variation $V_{I}(\partial_xu_n^{(2)}/|\partial_xu_n|)$ is uniformly bounded in $n$. Therefore, after passing to a subsequence without relabeling, we have $\partial_xu_n^{(2)}/|\partial_xu_n|\to \sigma$ strongly in $L^1(I)$ for some $\sigma\in L^1$. By the hypotheses, there exists $\vartheta\in C^0(I)$ with $|\partial_xu_n|\to\vartheta$ uniformly. The uniform bounds $\ell\leq \vartheta\leq L$ yield
    \begin{equation}
        \partial_xu_n^{(2)} \to \sigma\vartheta\quad\text{in $L^1$}.
    \end{equation}
    Since however $|\partial_xu_n^{(2)}|\leq L$, this immediately improves to $\partial_xu_n^{(2)}\to \sigma\vartheta$ strongly in $L^p(I)$ for any $1\leq p<\infty$. By the hypotheses, we have $\partial_xu_n^{(2)}\rightharpoonup^* \partial_xu_{\infty}^{(2)}$ in $L^{\infty}(I)$. Together, this yields $\sigma\vartheta=\partial_xu_{\infty}^{(2)}$. That is,
    \begin{equation}\label{eq:dir-conv-1}
        \partial_xu_n^{(2)} \to \partial_xu_{\infty}^{(2)}\quad\text{strongly in $L^p(I)$ for any $1\leq p<\infty$}.
    \end{equation}
    Let $\varepsilon>0$ be arbitrary. By the uniform convergence $u_n\to u_{\infty}$, there exists $N(\varepsilon)\in\N$ with $u_n^{(2)}\leq \varepsilon$ on $Z$ for all $n\geq N(\varepsilon)$. In particular, using \cref{eq:est-dxu1,eq:ch-ve-1}, we obtain for $n\geq N(\varepsilon)$
    \begin{equation}
        8E+\eta+4 \geq \E(u_n) + 4 \geq \int_Z \frac{(\partial_xu_n^{(1)})^2}{L\cdot u_n^{(2)}}\dd x \geq \frac{1}{L\cdot \varepsilon} \|\partial_xu_n^{(1)}\|_{L^2(Z)}^2.
    \end{equation}
    Therefore, one has $\partial_x u_n^{(1)}\to 0$ strongly in $L^2(Z)$. Thus, using also \cref{eq:dir-conv-1}, $(\partial_x u_n^{(2)})^2 = |\partial_xu_n|^2-(\partial_x u_n^{(1)})^2\geq \ell^2-(\partial_x u_n^{(1)})^2$ yields $\partial_xu_{\infty}^{(2)}\neq 0$ a.e. on $Z$. However, since $u_{\infty}^{(2)}$ is Lipschitz and minimal everywhere on $Z$, one has $\partial_xu_{\infty}^{(2)}=0$ a.e. on $Z$ by Rademacher's theorem. Alternatively, $\partial_xu_{\infty}^{(2)}=0$ a.e. on $Z$ also follows by a Lemma of De La Vall\'ee Poussin, cf. \cite[Lemma 2.25]{butazzogiaquintahildebrandt1998}. This, \eqref{eq:dir-conv-1} and the obtained convergence $\partial_xu_n^{(1)}\to 0$ in $L^2(Z)$ yields that $|\partial_xu_n|\to 0$ in $L^2(Z)$. Since for all $n\in\N$ one has $\ell\leq |\partial_xu_n|$, we infer that $|Z|=0$. As noted above, this yields $Z=\{a_1,\dots,a_{{m}}\}$. 

    Note that we necessarily have ${m}\leq E$ by \cref{eq:ch-ve-1}, since \cref{lem:eich-grun} yields that, for some choice of $x_1,\dots,x_{{m}+1}\in I^{\circ}$ with $x_j<a_j<x_{j+1}$ for $1\leq j\leq {m}$, $\sup_{n\in\N}\E(u_n|_{[x_j,x_{j+1}]})\geq 8$, similarly as at the beginning of the proof. 
\end{proof}

\subsection{Elastic energy of curves approaching hyperbolic infinity with non-vanishing Euclidean length}\label{subsec:len-est}

In the following, we rigorously show that, if a sequence of curves in $\D^2$ approaches ``hyperbolic infinity'', i.e. $|\gamma_n|\to 1$ uniformly, and if $\E(\gamma_n)\leq M$ for all $n\in\N$, then their Euclidean lengths must converge to $0$. To this end, we firstly deduce the following lemma in the half-plane model $\H^2$ as a direct consequence of \cref{eq:est-dxu1}.

\begin{lemma}\label{cor:bu-elen}
    Consider immersions $u_n\colon I\to\H^2$ and measurable $E_n\subseteq I$ with $\sup_{n\in\N}\Ll_{\R^2}(u_n)<\infty$, $\int_{E_n}|\partial_xu_n^{(1)}|\dd x \geq \ell>0$ and $\|u_n^{(2)}\|_{L^{\infty}(E_n)} \to 0$. Then $\E(u_n) \to \infty$.
\end{lemma}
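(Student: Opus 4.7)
The plan is to apply the Cauchy--Schwarz inequality on $E_n$ to decouple $|\partial_xu_n^{(1)}|$ into a factor matching the integrand in \cref{eq:est-dxu1} and a factor whose integral can be controlled by $\|u_n^{(2)}\|_{L^\infty(E_n)}$ and the Euclidean length. Specifically, I would write
\begin{equation}
    |\partial_xu_n^{(1)}| \;=\; \frac{|\partial_xu_n^{(1)}|}{\sqrt{|\partial_xu_n|\,u_n^{(2)}}} \cdot \sqrt{|\partial_xu_n|\,u_n^{(2)}}
\end{equation}
and apply Cauchy--Schwarz to obtain
\begin{equation}
    \ell^2 \;\leq\; \Big(\int_{E_n}|\partial_xu_n^{(1)}|\dd x\Big)^2 \;\leq\; \Big(\int_{E_n}\frac{(\partial_xu_n^{(1)})^2}{|\partial_xu_n|\,u_n^{(2)}}\dd x\Big)\cdot\Big(\int_{E_n}|\partial_xu_n|\,u_n^{(2)}\dd x\Big).
\end{equation}

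Next, I would estimate both factors on the right. The first factor is bounded by $\E(u_n)+4$, directly by \cref{eq:est-dxu1}. The second factor is bounded by $\|u_n^{(2)}\|_{L^\infty(E_n)}\cdot\Ll_{\R^2}(u_n|_{E_n})\leq \|u_n^{(2)}\|_{L^\infty(E_n)}\cdot\Ll_{\R^2}(u_n)$, and hence by $C\cdot\|u_n^{(2)}\|_{L^\infty(E_n)}$ where $C=\sup_{n\in\N}\Ll_{\R^2}(u_n)<\infty$ by hypothesis. Combining the two bounds gives
\begin{equation}
    \ell^2 \;\leq\; \big(\E(u_n)+4\big)\cdot C\cdot\|u_n^{(2)}\|_{L^\infty(E_n)}.
\end{equation}

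Since $\ell>0$ is fixed and $\|u_n^{(2)}\|_{L^\infty(E_n)}\to 0$, rearranging forces $\E(u_n)+4\to\infty$, and therefore $\E(u_n)\to\infty$, as claimed. The entire argument is very short: the only substantive ingredient is \cref{eq:est-dxu1}, and the only ``trick'' is choosing the Cauchy--Schwarz split so that the $u_n^{(2)}$-weight cancels correctly; once that decomposition is written down, there is no real obstacle.
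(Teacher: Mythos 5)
Your proof is correct and follows essentially the same route as the paper: the only substantive ingredient is \eqref{eq:est-dxu1} combined with Cauchy--Schwarz on $E_n$. The paper first normalizes to constant Euclidean speed and uses $u_n^{(2)}\leq\varepsilon$ inside the integral, whereas your weighted split with $\sqrt{|\partial_xu_n|\,u_n^{(2)}}$ is parametrization-invariant and slightly streamlines the same estimate.
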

\begin{proof}
    Fix $\varepsilon>0$. Then for $n$ sufficiently large, $u_n^{(2)}\leq \varepsilon$ a.e. on $E_n$. Without loss of generality, $u_n$ is parametrized by constant Euclidean speed on $I=[0,1]$. That is, $|\partial_xu_n|\equiv \Ll_{\R^2}(u_n)$. Using \cref{eq:est-dxu1} and Cauchy-Schwarz,
    \begin{align}
        \E(u_n)+4&\geq \int_0^1 \frac{(\partial_xu_n^{(1)})^2}{|\partial_xu_n|u_n^{(2)}}\dd x = \frac{1}{\Ll_{\R^2}(u_n)}\Bigl\| \frac{\partial_xu_n^{(1)}}{{u_n^{(2)}}^{\frac{1}{2}}} \Bigr\|_{L^2(I)}^2  \geq \frac{1}{\Ll_{\R^2}(u_n)} \Bigl({\int_{E_n} \frac{|\partial_xu_n^{(1)}|}{{u_n^{(2)}}^{\frac{1}{2}}} \dd x}\Bigr)^2\\
        &\geq \frac{1}{\Ll_{\R^2}(u_n)\cdot\varepsilon} \Bigl({\int_{E_n} |\partial_xu_n^{(1)}|\dd x}\Bigr)^2 \geq \frac{\ell^2}{\Ll_{\R^2}(u_n)\cdot\varepsilon}. \qedhere 
    \end{align}
\end{proof}

The following proposition generalizes the result of \cref{cor:bu-elen} in $\H^2$ from a lower bound on Euclidean movements only in the horizontal direction (i.e. of $u_n^{(1)}$) to a lower bound on the full Euclidean length of $u_n$. The main idea in its proof is the observation that, if $u_n^{(2)}\to 0$ uniformly and if $\int_I |\partial_xu_n^{(1)}|\dd x\to 0$ while $\Ll_{\R^2}(u_n)\geq \ell$, then each $u_n$ has to have an in $n$ unbounded amount of individual segments with $\partial_xu_n^{(2)}>0$ and $\partial_xu_n^{(2)}<0$. For each of these segments, we deduce a lower bound on the elastic energy using \eqref{eq:br-gr}. Since the proof is rather technical, it is postponed to \cref{app:el}.

\begin{proposition}\label{cor:bu-len}
    Consider immersions $u_n\colon I\to\H^2$ such that $\|u_n^{(2)}\|_{L^{\infty}(I)}\to 0$ and suppose that $\Ll_{\R^2}(u_n)\geq \ell>0$. Then $\limsup_{n\to\infty}\E(u_n)=\infty$.
\end{proposition}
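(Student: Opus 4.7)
My plan is a dichotomy on the relative contributions of horizontal versus vertical Euclidean motion. Since $\E$ is monotone under subinterval restriction, by replacing $u_n$ by $u_n|_{J_n}$ for suitable $J_n \subseteq I$ with $\Ll_{\R^2}(u_n|_{J_n}) = \ell$, I may assume $\Ll_{\R^2}(u_n) = \ell$ (bypassing the upper-bound hypothesis required by \cref{cor:bu-elen}). If $\int_I |\partial_x u_n^{(1)}|\,dx \geq \ell'>0$ along a subsequence, then \cref{cor:bu-elen} with $E_n=I$ yields $\E(u_n)\to\infty$ along that subsequence, and we are done. I focus on the complementary case $\int_I |\partial_x u_n^{(1)}|\,dx \to 0$, which via $|\partial_x u_n|\leq |\partial_x u_n^{(1)}|+|\partial_x u_n^{(2)}|$ forces $\int_I |\partial_x u_n^{(2)}|\,dx \geq \ell - o(1)$.

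Fix $\delta\in(0,1)$ with $c(\delta):=\sqrt{2\delta-\delta^2}$. On the ``vertical'' set $G_n := \{|\partial_x u_n^{(1)}| \leq c(\delta)|\partial_x u_n|\}$ one has $|\sin\theta|\geq 1-\delta$ for $\sin\theta:=\partial_x u_n^{(2)}/|\partial_x u_n|$; on its complement $B_n$ the reverse bound yields $\int_{B_n}|\partial_x u_n|\,dx \leq c(\delta)^{-1}\int_I |\partial_x u_n^{(1)}|\,dx = o(1)$. Writing $N_n^{\mathrm{good}}(t)$, resp.\ $N_n^{\mathrm{bad}}(t)$, for the number of preimages of $t$ under $u_n^{(2)}$ lying in $G_n$, resp.\ $B_n$, the coarea formula gives
\begin{equation}
    \int_0^{\|u_n^{(2)}\|_{L^\infty(I)}} N_n^{\mathrm{good}}\,dt \geq (1-\delta)(\ell-o(1)), \qquad \int_0^{\|u_n^{(2)}\|_{L^\infty(I)}} N_n^{\mathrm{bad}}\,dt = o(1).
\end{equation}
Since $\|u_n^{(2)}\|_{L^\infty(I)}\to 0$, an averaging argument applied to $N_n^{\mathrm{good}} - 10\,N_n^{\mathrm{bad}}$, combined with Sard's theorem, furnishes a regular value $t_n \in (0, \|u_n^{(2)}\|_{L^\infty(I)})$ of $u_n^{(2)}$ satisfying $N_n^{\mathrm{good}}(t_n) \geq 10\,N_n^{\mathrm{bad}}(t_n)$ and $N_n^{\mathrm{good}}(t_n)\to\infty$.

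At the level $t_n$, label the finitely many transverse crossings $x_1<\dots<x_N$; they bound alternating ``arches'' ($u_n^{(2)}>t_n$) and ``valleys'' ($u_n^{(2)}<t_n$). On a valley $V=[x_j,x_{j+1}]$, monotonicity at the endpoints forces $\sin\theta(x_j)\leq 0$ and $\sin\theta(x_{j+1})\geq 0$, and if both endpoints lie in $G_n$ -- call such $V$ \emph{good} -- this strengthens to $\sin\theta(x_j)\leq -(1-\delta)$ and $\sin\theta(x_{j+1})\geq 1-\delta$. Applying \eqref{eq:br-gr} to $u_n|_V$ together with $\W(f_{u_n|_V})\geq 0$ then yields
\begin{equation}
    \E(u_n|_V)\geq 4\bigl(\sin\theta(x_{j+1})-\sin\theta(x_j)\bigr)\geq 8(1-\delta).
\end{equation}
Each crossing in $B_n$ is an endpoint of at most two segments, so at most $2N_n^{\mathrm{bad}}(t_n)$ valleys fail to be good; combined with $N_n^{\mathrm{bad}}(t_n)\leq N_n^{\mathrm{good}}(t_n)/10$ and the trivial lower bound (\# of valleys) $\geq N_n^{\mathrm{good}}(t_n)/2 - 1$, the number of good valleys is at least $\tfrac{3}{10}N_n^{\mathrm{good}}(t_n)-1 \to \infty$. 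Summing \eqref{eq:br-gr} over the pairwise disjoint good valleys delivers $\E(u_n)\to\infty$, hence $\limsup_n \E(u_n)=\infty$.

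The chief obstacle is converting the global $L^1$-smallness of $\partial_x u_n^{(1)}$ into pointwise control at well-chosen level-set points; this is the role of the averaging/coarea step, which selects a level at which the tangent directions at nearly all crossings are essentially vertical. The hypothesis $\|u_n^{(2)}\|_{L^\infty(I)}\to 0$ enters decisively through the denominator in the mean-value bound, which makes the count of good crossings tend to infinity. A subsidiary reduction to subintervals of fixed Euclidean length is needed only to supply the upper bound on $\Ll_{\R^2}(u_n)$ required by \cref{cor:bu-elen}.
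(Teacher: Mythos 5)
Your proof is correct. It shares the outer structure of the paper's argument --- the reduction to $\Ll_{\R^2}(u_n)$ of order $\ell$, the dichotomy that dispatches the case $\int_I|\partial_x u_n^{(1)}|\,\dd x\not\to 0$ via \cref{cor:bu-elen}, and the use of \eqref{eq:br-gr} with $\W\geq 0$ as the sole source of energy --- but the combinatorial core is genuinely different. The paper decomposes $\{\partial_xu_n^{(2)}\neq 0\}$ into monotonicity intervals of $u_n^{(2)}$, observes that each is short because the variation of $u_n^{(2)}$ across it is at most $\varepsilon$, groups them by pigeonhole into roughly $\varepsilon^{-1/2}$ blocks of length $\sqrt{\varepsilon}$, and in each block pairs a nearly vertical point with a monotonicity endpoint where $\partial_xu_n^{(2)}=0$, so that each block contributes energy about $4\sqrt{\ell^2-\varepsilon}/L$. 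You instead run a coarea/level-set count: the Banach indicatrix of $u_n^{(2)}$ restricted to the nearly vertical set $G_n$ has integral at least $(1-\delta)(\ell-o(1))$ over a range of levels of length $\|u_n^{(2)}\|_{L^\infty}\to 0$, so some regular level is crossed $\gtrsim \ell/\|u_n^{(2)}\|_{L^\infty}$ times with almost all crossings nearly vertical, and each good valley between consecutive crossings contributes $8(1-\delta)$ by \eqref{eq:br-gr}. Your route is arguably cleaner, recovers the essentially sharp per-oscillation cost $8(1-\delta)$ rather than roughly $2$, and yields the explicit rate $\E(u_n)\gtrsim \ell/\|u_n^{(2)}\|_{L^\infty(I)}$. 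In a final write-up you should spell out the routine points that make the averaging step legitimate: measurability of the counting functions $N_n^{\mathrm{good}}$ and $N_n^{\mathrm{bad}}$, finiteness of the preimage at a regular value (compactness of $I$), and the fact that the two outermost segments of $I$, not being bounded by two crossings, are simply discarded --- which your bound ``$\#$valleys $\geq N/2-1$'' already absorbs.
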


\begin{corollary}\label{cor:bu-en-poin}
    Let $\gamma_n\colon I\to\D^2$ be immersions with the property that $\Ll_{\R^2}(\gamma_n|_{E_n})\geq \ell>0$ and $\sup_{E_n}|1-|\gamma_n||\to 0$ for intervals $E_n\subseteq I$. Then $\E(\gamma_n)\to\infty$ for $n\to\infty$.
\end{corollary}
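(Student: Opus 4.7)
My plan is to reduce the claim to \cref{cor:bu-len} via the isometry $\Phi\colon\D^2\to\H^2$ from \cref{eq:def-Phi}, and to argue by contradiction against a uniform bound $\E(\gamma_n)\leq M<\infty$. The main subtlety is that $\Phi$ sends $\partial\D^2\setminus\{(0,1)^t\}$ into $\{u^{(2)}=0\}$ but blows up at the north pole $(0,1)^t$, where the conversion between $|\partial_x\gamma|$ and $|\partial_xu|$ in \cref{eq:vel-u-gamma} degenerates and $u^{(2)}$ need not vanish even though $|\gamma|\to 1$. Portions of $\gamma_n|_{E_n}$ approaching $\partial\D^2$ near the north pole must therefore be excised before the $\H^2$-estimate can be invoked.

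Assuming $\E(\gamma_n)\leq M$ along a subsequence which I relabel, I pick $p_n\in\gamma_n(E_n)$. Since $|p_n|\to 1$, a further subsequence yields $p_n\to p_\infty\in\partial\D^2$. Rotations about the origin are isometries of $\D^2$ preserving $|\cdot|$, Euclidean length, and $\E$, so I may rotate each $\gamma_n$ (and relabel) so that $p_n\to(0,-1)^t$; this places the marker point as far as possible from the exceptional pole.

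Next I extract a sub-interval $F_n\subseteq E_n$ on which $\gamma_n$ stays at Euclidean distance at least $\tfrac12$ from $(0,1)^t$. Letting $x_n\in E_n$ with $\gamma_n(x_n)=p_n$, I take $F_n$ to be the connected component of $\{t\in E_n:|\gamma_n(t)-(0,1)^t|\geq\tfrac12\}$ containing $x_n$. If $F_n=E_n$, then $\Ll_{\R^2}(\gamma_n|_{F_n})\geq\ell$. Otherwise $F_n$ has a boundary point $t_n\in E_n$ with $|\gamma_n(t_n)-(0,1)^t|=\tfrac12$, and combining $\gamma_n(x_n)\to(0,-1)^t$ with the triangle inequality gives
\begin{equation}
    \Ll_{\R^2}(\gamma_n|_{F_n})\geq|\gamma_n(x_n)-\gamma_n(t_n)|\geq 2-\tfrac12-o(1)>1
\end{equation}
for $n$ large. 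In all cases $\Ll_{\R^2}(\gamma_n|_{F_n})\geq\min(\ell,1)>0$ eventually.

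Setting $u_n=\Phi\circ\gamma_n$, the bound $\tfrac12\leq|\gamma_n-(0,1)^t|\leq 2$ on $F_n$ and \cref{eq:vel-u-gamma} give $|\partial_xu_n|\geq\tfrac12|\partial_x\gamma_n|$, so $\Ll_{\R^2}(u_n|_{F_n})$ remains uniformly bounded below. Simultaneously, $u_n^{(2)}=(1-|\gamma_n|^2)/|\gamma_n-(0,1)^t|^2\leq 4(1-|\gamma_n|^2)\to 0$ uniformly on $F_n\subseteq E_n$. Applying \cref{cor:bu-len} to $u_n|_{F_n}$ then forces $\limsup_n\E(u_n|_{F_n})=\infty$, contradicting $\E(u_n|_{F_n})\leq\E(u_n)=\E(\gamma_n)\leq M$. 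I expect the principal obstacle to be the construction of $F_n$ in the third paragraph: one has to rule out that the assumed Euclidean length could concentrate in a vanishing neighborhood of the bad pole $(0,1)^t$, and this is exactly what the anchored connected-component choice enforces quantitatively via the triangle inequality.
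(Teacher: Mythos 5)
Your proof is correct, and it invokes the same key lemma (\cref{cor:bu-len}) as the paper; the difference is entirely in how the curves are kept uniformly away from the pole $(0,1)^t$, where $\Phi$ degenerates. The paper first shrinks $E_n$ to sub-intervals of Euclidean length between $\ell'$ and $2\ell'<\tfrac12$, reparametrizes by constant speed, and extracts a $W^{1,\infty}$ weak-$*$ limit; since the limit is then an arc of $\partial\D^2$ of Euclidean length at most $\tfrac12$, a single rotation moves it — and hence, by uniform convergence, the curves themselves for large $n$ — away from $(0,1)^t$. You instead anchor at a marker point $p_n\to(0,-1)^t$ and cut at the circle $|z-(0,1)^t|=\tfrac12$, obtaining the length lower bound on the connected component $F_n$ directly from the triangle inequality ($\geq \tfrac32-o(1)$ when the cut is active, $\geq\ell$ otherwise). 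Your route is more elementary and quantitatively explicit: no compactness, no lower semicontinuity, and no need to shorten the interval so that the limit arc is "rotatable" away from the pole. Two minor points you should make explicit when writing this up: first, \cref{cor:bu-len} is stated for immersions of a fixed interval while your $F_n$ vary, but since $\E$, $\Ll_{\R^2}$ and $\|u_n^{(2)}\|_{L^\infty}$ are parametrization-invariant one simply reparametrizes $u_n|_{F_n}$ over $[0,1]$ first (the paper performs exactly this normalization at the start of the proof of \cref{cor:bu-len}); second, the passage from the contradiction hypothesis along a subsequence to the stated conclusion $\E(\gamma_n)\to\infty$ is the standard subsequence argument, which the paper also records at the end of its proof.
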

\begin{proof}
    Without loss of generality, write $I=[0,1]$. By restricting to intervals $E_n'\subseteq E_n$, one can arrange for some fixed $\ell'\in(0,\min\{\ell,\frac14\})$ that
    \begin{equation}\label{eq:lenbound}
        0<\ell' \leq \Ll_{\R^2}(\gamma_n|_{E_n'}) \leq 2\ell' < \frac{1}{2}
    \end{equation}
    for all $n\in\N$. Denote by $\widetilde{\gamma_n}\colon I\to\D^2$ the reparametrization of $\gamma_n|_{E_n'}$ by constant Euclidean speed on $I$. By the parametrization and \cref{eq:lenbound} and since $\D^2$ is bounded in $\R^2$, after passing to a subsequence, $\widetilde{\gamma_n}\rightharpoonup^*\widetilde{\gamma_{\infty}}$ in $W^{1,\infty}(I)$. Lower semi-continuity yields that
    \begin{equation}
        \| |\partial_x\widetilde{\gamma_{\infty}}| \|_{L^{\infty}(I)} \leq \liminf_{n\to\infty} \| |\partial_x\widetilde{\gamma_n}| \|_{L^{\infty}(I)} = \liminf_{n\to\infty} \Ll_{\R^2}(\widetilde{\gamma_n}) < \frac{1}{2}
    \end{equation}
    using \cref{eq:lenbound}. Therefore, $\Ll_{\R^2}(\widetilde{\gamma_{\infty}}) \leq \frac{1}{2}$. Furthermore, by the assumptions in the statement, $|\widetilde{\gamma_{\infty}}|\equiv 1$, that is $\widetilde{\gamma_{\infty}}(I)\subseteq\partial\D^2=\S^1$ has Euclidean length at most $\frac12$. 

    Therefore, there exists a rotation $O\in\R^{2\times 2}$ such that $\mathrm{dist}_{\mathbb{R}^2}(O\cdot \widetilde{\gamma_{\infty}}(I),(0,1)^t)>\frac{1}{2}$. Since $O$ is an isometry of $\D^2$ and thus keeps the elastic energy fixed, we can suppose without loss of generality that $\mathrm{dist}_{\mathbb{R}^2}(\widetilde{\gamma_{\infty}}(I),(0,1)^t)>\frac12$. 

    By the uniform convergence $\widetilde{\gamma_n}\to\widetilde{\gamma_{\infty}}$, there exists $N_0\in\N$ with $\mathrm{dist}_{\mathbb{R}^2}(\widetilde{\gamma_n}(I),(0,1)^t)>\frac14$ for $n\geq N_0$. Then, writing $u_n=\Phi\circ\widetilde{\gamma_n}\colon I\to\H^2$ with $\Phi$ as in \cref{eq:def-Phi}, using \cref{eq:vel-u-gamma},
    \begin{equation}
        \Ll_{\R^2}(u_n)=\int_I |\partial_xu_n|\dd x = 2 \int_I \frac{|\partial_x\widetilde{\gamma_n}|}{|\widetilde{\gamma_n}-(0,1)^t|^2}\dd x,
    \end{equation}
    so $\Ll_{\R^2}(u_n)\geq \frac12 \Ll_{\R^2}(\widetilde{\gamma_n}) \geq \frac12\ell'$ for all $n\geq N_0$, using also \cref{eq:lenbound}. Moreover, for $n\geq N_0$,
    \begin{equation}
        u_n^{(2)} = \frac{1-|\widetilde{\gamma_n}|^2}{|\widetilde{\gamma_n}-(0,1)^t|^2} \leq 16 (1-|\widetilde{\gamma_n}|^2) 
    \end{equation}
    which converges uniformly to $0$ for $n\to\infty$ by assumption. Altogether, \cref{cor:bu-len} yields that $\limsup_{n\to\infty}\E(\gamma_n)\geq \limsup_{n\to\infty}\E(\widetilde{\gamma_n}) = \limsup_{n\to\infty}\E(u_n) = \infty$. The claim then follows by a standard sub-sequence argument.
\end{proof}

As explained before, in this work, we focus on the Poincar\'e model $\D^2$ for the hyperbolic plane. Thus, we deduce the following analogous formulation of \cref{cor:ch-ve} in the Poincar\'e model, using \cref{cor:bu-en-poin}. 

\begin{corollary}\label{cor:ch-ve-poin}
    Consider immersions $\gamma_n\colon I\to\D^2$ on a compact interval $I\subseteq\R$ and suppose that $0<\ell\leq |\partial_x \gamma_n|\leq L<\infty$. If $(|\partial_x\gamma_n|)_{n\in\N}$ converges uniformly on $I$,
    \begin{equation}
        \gamma_n\rightharpoonup^*\gamma_{\infty}\text{ in $W^{1,\infty}(I)$}\quad\text{and}\quad \gamma_n\to \gamma_{\infty}\text{ uniformly on $I$}
    \end{equation}
    and
    \begin{equation}\label{eq:ch-ve-poi}
        \sup_{n\in\N}\E(\gamma_n) \leq E\cdot 8 + \eta\quad\text{for some $E\in\N_0$ and $\eta<8$},
    \end{equation}
    then $Z=\{|\gamma_{\infty}|=1\}\cap I^{\circ}$ consists of at most $E$ points.
\end{corollary}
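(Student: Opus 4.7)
The plan is to argue by contradiction: assuming $|Z| \geq E+1$, I will produce $E+1$ essentially disjoint sub-intervals of $I$ on each of which the elastic energy of $\gamma_n$ contributes at least $8$ in the liminf, thus forcing $\liminf_n \E(\gamma_n) \geq 8(E+1) > 8E + \eta$ and contradicting \eqref{eq:ch-ve-poi}. This parallels the proof of \cref{cor:ch-ve}, but replaces \cref{lem:eich-grun} with \cref{rem:eich-grun-poin} and, crucially, uses \cref{cor:bu-en-poin} to circumvent the Choksi--Veneroni step in favor of a direct Euclidean-length argument.

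As a first step, I would rule out non-trivial interval components of $Z$. If $[a,b] \subseteq Z$ with $a < b$, then $|\gamma_{\infty}| \equiv 1$ on $[a,b]$, so the uniform convergence $\gamma_n \to \gamma_\infty$ yields $\sup_{[a,b]}(1-|\gamma_n|) \to 0$, while the lower bound $|\partial_x \gamma_n| \geq \ell$ guarantees $\Ll_{\R^2}(\gamma_n|_{[a,b]}) \geq \ell(b-a) > 0$. Then \cref{cor:bu-en-poin}, applied with the constant sequence $E_n = [a,b]$, yields $\E(\gamma_n) \to \infty$, contradicting \eqref{eq:ch-ve-poi}. Hence $Z$ has empty interior in $I$.

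Suppose now for contradiction that $Z$ contains $E+1$ points $a_1 < \dots < a_{E+1}$, all lying in $I^\circ$ by definition of $Z$. Since $Z$ has empty interior in $I$, I can choose separators $x_0 < a_1 < x_1 < \dots < x_E < a_{E+1} < x_{E+1}$ in $I$ with $x_j \in I \setminus Z$, ensuring $|\gamma_{\infty}(x_j)| < 1$. Uniform convergence then yields $\alpha > 0$ with $|\gamma_n(x_j)| \leq 1 - \alpha$ for each $j$ and all sufficiently large $n$. On each sub-interval $J_j = [x_{j-1}, x_j]$ containing $a_j \in Z$, the endpoint values lie in $\{|\cdot| \leq 1-\alpha\}$ while $|\gamma_n(a_j)| \to 1$ forces $\max_{J_j}|\gamma_n| \to 1$, so \cref{rem:eich-grun-poin} gives $\limsup_n \E(\gamma_n|_{J_j}) \geq 8$.

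The main obstacle is that $\limsup$ does not combine well across the $E+1$ intervals; what I really need is $\liminf_n \E(\gamma_n|_{J_j}) \geq 8$ for every $j$. I would obtain this by a subsequence-extraction argument: were $\liminf_n \E(\gamma_n|_{J_j}) < 8$ for some $j$, then along a subsequence $n_k$ one would have $\E(\gamma_{n_k}|_{J_j}) \leq 8 - \varepsilon$ eventually, yet the hypotheses of \cref{rem:eich-grun-poin} persist along $(\gamma_{n_k})_k$ and force $\limsup_k \E(\gamma_{n_k}|_{J_j}) \geq 8$, a contradiction. Since the $J_j$ are pairwise disjoint up to endpoints, additivity of $\E$ yields $\E(\gamma_n) \geq \sum_{j=1}^{E+1} \E(\gamma_n|_{J_j})$, and superadditivity of $\liminf$ then gives $\liminf_n \E(\gamma_n) \geq 8(E+1) > 8E + \eta$, contradicting \eqref{eq:ch-ve-poi}. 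Therefore $|Z| \leq E$.
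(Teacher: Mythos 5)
Your proof is correct, and it takes a genuinely different route from the paper's. The paper proves this corollary by transferring to the half-plane model: after a rotation it sets $u_n=\Phi\circ\gamma_n$, verifies all hypotheses of \cref{cor:ch-ve} for $(u_n)_n$ (including the weak-$*$ convergence of $\partial_x u_n$, which requires the explicit Jacobian of $\Phi$), and invokes that theorem, whose own proof rests on the Choksi--Veneroni-type argument (uniform BV bounds on $\partial_xu_n^{(2)}/|\partial_xu_n|$ via \eqref{eq:br-gr}, the estimate \eqref{eq:est-dxu1}, and Rademacher's theorem) to show $|Z|=0$. You instead stay entirely in $\D^2$ and replace that measure-theoretic step by a single application of \cref{cor:bu-en-poin}: the uniform lower speed bound $|\partial_x\gamma_n|\geq\ell$ immediately gives $\Ll_{\R^2}(\gamma_n|_{[a,b]})\geq\ell(b-a)$ on any nondegenerate interval where $|\gamma_{\infty}|\equiv1$, which forces $\E(\gamma_n)\to\infty$ --- this is sharper than the paper's own use of \cref{cor:bu-en-poin} in its Step~2, which only shows each component of $Z$ has a one-point image. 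There is no circularity, since \cref{cor:bu-en-poin} is established before this corollary and independently of \cref{cor:ch-ve}. Your $\limsup$-to-$\liminf$ upgrade via subsequences is the right fix for combining the energy contributions of the $E+1$ separating intervals, and the final superadditivity step is sound. Two cosmetic points: your empty-interior argument in fact rules out nondegenerate intervals in the full set $\{|\gamma_{\infty}|=1\}$ (not only in $Z=\{|\gamma_{\infty}|=1\}\cap I^{\circ}$), and this slightly stronger statement is what you need to guarantee that the outermost separators $x_0,x_{E+1}$ can be chosen with $|\gamma_{\infty}(x_j)|<1$ even near $\partial I$; and you never use the uniform convergence of $(|\partial_x\gamma_n|)_n$ or the weak-$*$ convergence in $W^{1,\infty}$, so your argument proves the statement under weaker hypotheses. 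What the paper's detour buys is the half-plane quantization result \cref{cor:ch-ve} as a standalone theorem; for this corollary alone, your route is leaner.
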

\begin{proof}
    Clearly, this is proved by applying \cref{cor:ch-ve}. However, we first need to better understand the sequence $(\gamma_n)_{n\in\N}$ in order to be able to verify the assumptions of \cref{cor:ch-ve} for the sequence $u_n=\Phi\circ\gamma_n$ with $\Phi$ as in \cref{eq:def-Phi}.

    \textbf{Step 1.} Using \cref{rem:eich-grun-poin}, one argues just as in the proof of \cref{cor:ch-ve} that $Z=\bigcup_{j=1}^{{m}}I_j$ for finitely many connected components $I_1,\dots,I_{{m}}$ of $Z$.

    \textbf{Step 2.} Argue that ${\gamma_{\infty}}(I_j)$ consists of exactly one point in $\partial \D\subseteq\R^2$ for each $j=1,\dots,{m}$. Suppose that this is not the case for some $1\leq j\leq {m}$. In particular, there exist $p_1,p_2\in {\gamma_{\infty}}(I_j)$ with $\delta\vcentcolon=|p_1-p_2|>0$. Due to the uniform convergence ${\gamma_n}\to{\gamma_{\infty}}$, there exists $N_0\in\N$ such that, for each $n\geq N_0$, there are points $\xi_n^{(1)},\xi_n^{(2)}\in I_j$ with $|{\gamma_n}(\xi_n^{(i)})-p_i|\leq \frac14\delta$ for $i=1,2$. Therefore,
    \begin{equation}
        \Ll_{\R^2}({\gamma_n}|_{I_j}) \geq |{\gamma_n}(\xi_n^{(1)})-{\gamma_n}(\xi_n^{(2)})| \geq \frac12\delta
    \end{equation}
    for $n\geq N_0$. Using that $|{\gamma_{\infty}}|\equiv 1$ on $I_j$, \cref{cor:bu-en-poin} yields $\limsup_{n\to\infty}\E({\gamma_n}|_{I_j})=\infty$, a contradiction! This verifies Step 2.
    
    \textbf{Step 3.} Applying \cref{cor:ch-ve}. Using Step 2 and the uniform convergence ${\gamma_n}\to{\gamma_{\infty}}$, after applying a rotation to $\D^2$, without loss of generality one has for some $\varepsilon>0$ that
    \begin{equation}\label{eq:cvp-1}
        |{\gamma_n}(x)-(0,1)^t|\geq \varepsilon\quad\text{for all $n\in\N$ sufficiently large and $x\in I$.}
    \end{equation}
    Write  $u_n=\Phi\circ{\gamma_n}$ with $\Phi$ as in \cref{eq:def-Phi}. By \cref{eq:vel-u-gamma},
    \begin{equation}
        |\partial_xu_n| = 2\frac{|\partial_x{\gamma_n}|}{|{\gamma_n}-(0,1)^t|^2} 
    \end{equation}
    which, using \cref{eq:cvp-1}, converges uniformly on $I$. Moreover, $\frac12\ell\leq |\partial_xu_n| \leq \frac{2L}{\varepsilon^2}$ on $I$ for $n$ sufficiently large. In addition, again using \cref{eq:cvp-1} and \cref{eq:def-Phi}, $(u_n)_{n\in\N}$ converges uniformly to $u_{\infty} \vcentcolon=\Phi\circ \gamma_{\infty}$. The only assumption of \cref{cor:ch-ve} which remains to be verified is that $\partial_xu_n\rightharpoonup^*\partial_xu_{\infty}$ in $L^{\infty}(I)$. To this end, using \cref{eq:Jacobi-Phi}, one computes that 
    \begin{equation}
        \partial_xu_n^{(1)} = 2\frac{(\gamma^{(2)}-1)^2-(\gamma^{(1)})^2}{|\gamma-(0,1)^t|^4}\partial_x\gamma^{(1)} - 4\frac{\gamma^{(1)}(\gamma^{(2)}-1)}{|\gamma-(0,1)^t|^4}\partial_x\gamma^{(2)},
    \end{equation}
    \begin{equation}
        \partial_xu_n^{(2)} =4 \frac{\gamma^{(1)}(\gamma^{(2)}-1)}{|\gamma-(0,1)^t|^4}\partial_x\gamma^{(1)} + 2\frac{(\gamma^{(2)}-1)^2-(\gamma^{(1)})^2}{|\gamma-(0,1)^t|^4}\partial_x\gamma^{(2)}.
    \end{equation}
    Using \cref{eq:cvp-1}, $\gamma_n\to\gamma_{\infty}$ uniformly and $\partial_x\gamma_n^{(i)}\rightharpoonup^*\partial_x\gamma_{\infty}^{(i)}$ in $L^{\infty}(I)$ for $i=1,2$, it follows that $\partial_xu_n\rightharpoonup^*\partial_xu_{\infty}$ in $L^{\infty}(I)$.
    
    Altogether, \cref{cor:ch-ve} yields the claim, using $\{u_{\infty}^{(2)}=0\}=\{|\gamma_{\infty}|=1\}$ by \cref{eq:cvp-1}. 
\end{proof}

\subsection{On critical points of the elastic energy in the hyperbolic plane}

In this section, $M$ denotes either $\H^2$ or $\D^2$.

\begin{definition}\label{def:elastica}
    A smooth immersion $\gamma\colon I\to M$ is called \emph{$\lambda$-constrained (hyperbolic) elastica} if its scalar curvature $\scurv$ satisfies
    \begin{equation}\label{eq:elastica-eq}
        2\partial_s^2\scurv+\scurv^3-(\lambda+2)\scurv = 0
    \end{equation}
    for some $\lambda\in\R$. If $\scurv$ satisfies \cref{eq:elastica-eq} with $\lambda=0$, $\gamma$ is also referred to as \emph{free elastica} or simply \emph{elastica}.
\end{definition}

\begin{remark}
    Note that, for $\gamma\colon I\to M$ smooth and parametrized by hyperbolic arc-length, \cref{eq:elastica-eq} is equivalent to  $\nabla\E(\gamma) = \lambda\curv$. So free elastica are exactly the critical points of $\E$.
\end{remark}

The following two results of \cite{langersinger1984,muellerspener2020} completely characterize hyperbolic elastica:

\begin{lemma}[{\cite[Proposition 2.7]{muellerspener2020}}]
    Let $\gamma\colon I\to M$ be a $\lambda$-constrained elastica which is parametrized by arc-length in $M$. Then there is a constant $C\in\R$ such that
    \begin{equation}\label{eq:char-el-0}
        (\scurv')^2+\frac{1}{4}\scurv^4-\frac{\lambda+2}{2}\scurv^2=C
    \end{equation}
    and $\zeta\defeq\scurv^2$ is a non-negative solution of 
    \begin{equation}\label{eq:char-el-1}
        (\zeta')^2+\zeta^3-(2\lambda+4)\zeta^2-4C\zeta=0.
    \end{equation}
\end{lemma}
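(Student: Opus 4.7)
The plan is a classical first-integral argument for the elastica ODE, followed by a simple algebraic substitution.

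First, I would work directly from the definition: since $\gamma$ is parametrized by arc-length in $M$, the $\lambda$-constrained elastica condition in \eqref{eq:elastica-eq} reads
\begin{equation}
    2\scurv'' + \scurv^3 - (\lambda+2)\scurv = 0,
\end{equation}
where $'$ denotes $\partial_s$. Multiplying by $\scurv'$ and recognizing the result as a total $s$-derivative,
\begin{equation}
    \frac{\dd}{\dd s}\Bigl[(\scurv')^2 + \tfrac{1}{4}\scurv^4 - \tfrac{\lambda+2}{2}\scurv^2\Bigr] = \scurv'\bigl(2\scurv'' + \scurv^3 - (\lambda+2)\scurv\bigr) = 0,
\end{equation}
so there exists $C \in \R$ (depending only on the initial data of $\scurv, \scurv'$) such that $(\scurv')^2 + \frac{1}{4}\scurv^4 - \frac{\lambda+2}{2}\scurv^2 \equiv C$ on $I$. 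This establishes \eqref{eq:char-el-0}.

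For \eqref{eq:char-el-1}, I would set $\zeta \defeq \scurv^2$ and simply differentiate: $\zeta' = 2\scurv\scurv'$, hence $(\zeta')^2 = 4\scurv^2(\scurv')^2 = 4\zeta(\scurv')^2$. Substituting the expression for $(\scurv')^2$ from \eqref{eq:char-el-0},
\begin{equation}
    (\zeta')^2 = 4\zeta\Bigl(C - \tfrac{1}{4}\zeta^2 + \tfrac{\lambda+2}{2}\zeta\Bigr) = -\zeta^3 + (2\lambda+4)\zeta^2 + 4C\zeta,
\end{equation}
which after rearrangement is precisely \eqref{eq:char-el-1}. Non-negativity of $\zeta$ is immediate from its definition as a square.

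There is no real obstacle here: the argument is purely ODE-theoretic, using only the elastica equation and the arc-length parametrization. The sole thing to double-check is that the sign conventions for $\scurv$ (defined via the normal $\vec{n}$ just before \eqref{eq:curv-formula}) are consistent with the ODE in \Cref{def:elastica}; since \eqref{eq:elastica-eq} is odd in $\scurv$ while the first integral is even, the choice of sign of $\scurv$ does not affect \eqref{eq:char-el-0}, which makes the passage to $\zeta = \scurv^2$ particularly clean.
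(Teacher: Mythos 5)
Your proof is correct and is the standard first-integral argument; the paper itself does not reprove this lemma but cites \cite[Proposition 2.7]{muellerspener2020}, whose proof proceeds in exactly this way (multiply \eqref{eq:elastica-eq} by $\scurv'$, recognize a total derivative on the connected domain $I$, then substitute $\zeta=\scurv^2$). Nothing is missing.
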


In the following proposition, standard notation for the Jacobi-elliptic functions $\dn$ and $\cn$ is used. 

\begin{proposition}[{\cite[Proposition 2.8]{muellerspener2020}}]\label{prop:2.8}
    Each non-negative solution $\zeta$ of \cref{eq:char-el-1} is global and attains a global maximum $\scurv_0^2\defeq\sup_{x\in\R}\zeta(x)$. Therefore, all non-negative solutions of \cref{eq:char-el-1} are translations of solutions with the following initial conditions: $\zeta(0)=\scurv_0^2$ and $\zeta'(0)=0$. Furthermore, for $0<\scurv_0^2<\lambda+2$, there exist no $\lambda$-constrained elastica and the cases with $\scurv_0^2\geq \lambda+2$ are exhaustively classified by the following:
    \begin{enumerate}[(a)]
        \item (Circular elastica) $\scurv_0^2=\lambda+2$, $C<0$ and $\zeta(s)=\lambda+2$.
        \item (Orbit-like elastica) $\scurv_0^2\in(\lambda+2,2\lambda+4)$, $C<0$ and $\zeta(s)=\scurv_0^2\dn^2(rs,p)$ where $r=\frac{1}{2}\sqrt{\frac{2\lambda+4}{2-p^2}}$ and $p\in(0,1)$ is such that $\scurv_0^2=\frac{2\lambda+4}{2-p^2}$.
        \item (Asymptotically geodesic elastica) $\scurv_0^2=2\lambda+4$, $C=0$ and $\zeta(s)=\scurv_0^2\frac{1}{\cosh^2(rs)}$ where $r=\frac{1}{2}\sqrt{2\lambda+4}$.
        \item (Wave-like elastica) $\scurv_0^2>2\lambda+4$, $C>0$ and $\zeta(s)=\scurv_0^2\cn^2(rs,p)$ where $r=\frac{1}{2}\sqrt{\frac{2\lambda+4}{2p^2-1}}$ and $p\in(\frac{1}{\sqrt{2}},1)$ is such that $\scurv_0^2=\frac{(2\lambda+4)p^2}{2p^2-1}$.
    \end{enumerate} 
\end{proposition}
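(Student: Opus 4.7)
The plan is to read \eqref{eq:char-el-1} as a conservation law and carry out a phase-portrait analysis of the cubic $P(\zeta) = -\zeta^3 + (2\lambda+4)\zeta^2 + 4C\zeta = \zeta\bigl(-\zeta^2 + (2\lambda+4)\zeta + 4C\bigr)$. Since $(\zeta')^2 = P(\zeta)$, any non-negative solution is confined to $\{\zeta \geq 0 : P(\zeta) \geq 0\}$, whose boundary consists of $0$ and the (at most two) non-negative roots $\zeta_{\pm} = (\lambda+2) \pm \sqrt{(\lambda+2)^2 + 4C}$ of the quadratic factor. I would first record that any such solution is global: local-in-time theory for the autonomous first-order ODE $(\zeta')^2 = P(\zeta)$, combined with the \emph{a priori} bound on $\zeta$ coming from the confinement, precludes blow-up. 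Moreover the supremum is attained --- either $\zeta$ is constant, or $\zeta'$ has isolated zeros, and standard phase-plane reasoning shows that the orbit touches the outer positive root of $P$.

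At any interior maximum $\zeta'$ vanishes, so $\scurv_0^2$ is itself a root of $P$. Solving $P(\scurv_0^2) = 0$ with $\scurv_0^2 > 0$ yields $\scurv_0^2 = (\lambda+2) + \sqrt{(\lambda+2)^2 + 4C}$, equivalently $\scurv_0^2 \geq \lambda+2$ together with $C = \tfrac14 \scurv_0^2\bigl(\scurv_0^2 - (2\lambda+4)\bigr)$. Inspecting this identity in the four declared ranges for $\scurv_0^2$ yields, respectively: a triple-root structure for $P$ with $C<0$, forcing $\zeta \equiv \lambda+2$ (case (a)); three distinct positive roots with $C<0$ (case (b)); a double root at $0$ with $C=0$ (case (c)); and a simple root at $0$ together with two non-trivial positive roots, with $C>0$ (case (d)). The same algebra excludes $0 < \scurv_0^2 < \lambda+2$, since there $P(\scurv_0^2) < 0$.

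In each non-constant case I would translate so that $\zeta(0) = \scurv_0^2$, $\zeta'(0) = 0$, justified by translation-invariance of the autonomous equation and the previous step, and then separate variables to reduce the equation to
\begin{equation}
    \pm s = \int_{\zeta(s)}^{\scurv_0^2} \frac{\mathrm{d}\eta}{\sqrt{P(\eta)}}.
\end{equation}
In case (b), the three distinct positive roots of $-P$ convert this into a standard Jacobi integral; substituting the ansatz $\zeta(s) = \scurv_0^2 \dn^2(rs, p)$ and matching $(\zeta')^2$ against $P(\zeta)$ via $(\dn)' = -p^2 \sn\,\cn$, $\sn^2 + \cn^2 = 1$, $p^2 \sn^2 + \dn^2 = 1$ uniquely determines $p$ from $\scurv_0^2 = (2\lambda+4)/(2-p^2)$ and then $r = \tfrac12\sqrt{(2\lambda+4)/(2-p^2)}$. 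Case (d) is analogous with a $\cn^2$-ansatz --- here $0$ is a simple root of $P$, which triggers the $\cn$-branch --- while case (c), the degenerate limit $p \to 1$, reduces to $\operatorname{sech}^2$ via $\cosh^{-2}(rs) = \lim_{p \to 1} \dn^2(rs,p)$.

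The main obstacle is the bookkeeping in this ansatz-matching step. One must show not only that the $\dn^2$/$\cn^2$/$\operatorname{sech}^2$ ansätze satisfy $(\zeta')^2 = P(\zeta)$, but also that the parameters $(r,p)$ must be chosen in the claimed sub-range --- $p \in (0,1)$ in (b) versus $p \in (1/\sqrt{2},1)$ in (d) --- and that uniqueness in the phase plane promotes this local matching to a characterization of every non-constant solution. The endpoint degenerations $p \to 0^+$ and $p \to 1^-$ connect the orbit-like and wave-like families continuously to the circular case (a) and the asymptotically geodesic case (c), providing a sanity check on the classification.
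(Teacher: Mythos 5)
The paper offers no proof of this statement---it is quoted directly from \cite[Proposition~2.8]{muellerspener2020}---so your proposal can only be judged on its own terms. The phase-portrait strategy is the right one, and your ansatz-matching computations check out: with $\scurv_0^2=\frac{2\lambda+4}{2-p^2}$ one indeed gets $P(\zeta)=\scurv_0^6p^4\sn^2\cn^2\dn^2$ against $(\zeta')^2=4r^2p^4\scurv_0^4\sn^2\cn^2\dn^2$, forcing $4r^2=\scurv_0^2$, and similarly in cases (c) and (d). But the exclusion of $0<\scurv_0^2<\lambda+2$ does not work as written. You assert that $P(\scurv_0^2)<0$ on that range, yet for $-(\lambda+2)^2/4<C<0$ the quadratic factor has a \emph{second} positive root $\zeta_-=(\lambda+2)-\sqrt{(\lambda+2)^2+4C}\in(0,\lambda+2)$, and the constant function $\zeta\equiv\zeta_-$ is a perfectly good non-negative solution of \eqref{eq:char-el-1} whose supremum lies in the supposedly forbidden range. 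No amount of algebra on \eqref{eq:char-el-1} alone can exclude it; one must return to \eqref{eq:elastica-eq}: constant $\zeta$ means constant $\scurv$, whence $\scurv^3-(\lambda+2)\scurv=0$ and so $\scurv^2=\lambda+2$, which is case (a). (Relatedly, your root bookkeeping is off: in (a) $P$ has a double root at $\lambda+2$ and a simple root at $0$, not a triple root, and in (b) the roots are $0$, $2\lambda+4-\scurv_0^2$ and $\scurv_0^2$, i.e.\ two positive roots plus zero. These slips do not affect the conclusions, but the $\zeta_-$ issue does.)

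The second genuine gap is the exhaustiveness step, which you flag but propose to close by ``uniqueness in the phase plane.'' For the first-order equation $(\zeta')^2=P(\zeta)$ such uniqueness is simply false at the roots of $P$: the constant $\zeta\equiv\scurv_0^2$ and the $\dn^2$-solution both pass through the phase point $(\scurv_0^2,0)$, and in case (d) both $\zeta\equiv 0$ and the $\cn^2$-solution pass through $(0,0)$. The separation-of-variables integral likewise degenerates at simple roots. The standard repair is to differentiate \eqref{eq:char-el-1}---equivalently, to work with $\scurv$ itself and the second-order equation \eqref{eq:elastica-eq}, whose right-hand side is smooth and hence enjoys genuine uniqueness---and to match the data $(\scurv(0),\scurv'(0))$ there. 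This is also what recovers the sign information lost in passing to $\zeta=\scurv^2$ and what actually separates the orbit-like family ($\scurv$ of one sign, $\dn$) from the wave-like family ($\scurv$ changing sign, $\cn$). Without this step the argument classifies candidate profiles but does not show that every non-constant solution is one of them.
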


\begin{remark}
    By the fundamental theorem of curve theory in $\H^2$ and since solutions to \cref{eq:char-el-1} are global, one obtains that any arc-length parametrized elastica defined on an interval can be extended to a globally defined elastica, cf. \cite[Lemma 5.9]{schlierf2023}. 
\end{remark}

\begin{remark}\label{rem:on-as-geo-el}
    The following property of segments of free, asymptotically geodesic elastica is used later. If $\gamma\colon [0,1)\to\H^2$ is asymptotically geodesic with $\partial_s\gamma(0)=(0,\pm1)^t$ and $\lim_{x\nearrow 1}\gamma(x)\in \R\times\{0\}$, then $\gamma^{(1)}(0)\neq \lim_{x\nearrow 1}\gamma^{(1)}(x)$. We briefly outline an argument for this observation in \cref{app:el}.
\end{remark}

\begin{remark}\label{rem:el-sm-elen}
    Suppose that $\gamma\colon I\to M$ is a free elastica with unbounded hyperbolic length and finite elastic energy. Using the explicit formulae for $\scurv$ in \cref{prop:2.8} and the fact that the Jacobi-elliptic functions $\dn$, $\cn$ are periodic, one immediately sees that segments of circular, orbit-like and wave-like elastica with unbounded hyperbolic length also have infinite elastic energy. Thus, $\gamma$ either is a geodesic or a segment of an asymptotically geodesic elastica.
\end{remark}

The proof of the following lemma is postponed to \cref{app:el}. The lemma highlights an advantage of considering $\D^2$ instead of $\H^2$ as model for the hyperbolic plane --- the statement fails in $\H^2$. 

\begin{lemma}\label{lem:el-fin-len}
    There is no elastica in $\D^2$ with finite elastic energy and infinite Euclidean length.
\end{lemma}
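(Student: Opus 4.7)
The plan is to argue by contraposition. Suppose $\gamma\colon I\to\D^2$ is a (free) elastica with $\E(\gamma)<\infty$ and $\Ll_{\R^2}(\gamma)=\infty$; we aim for a contradiction. By \cref{lem:basic-facts}(b), one has $\Ll_{\D^2}(\gamma)\geq 2\Ll_{\R^2}(\gamma)=\infty$, so the hyperbolic length is infinite as well. \cref{rem:el-sm-elen} then forces $\gamma$ to parametrize either a geodesic or a segment of an asymptotically geodesic elastica, and the proof reduces to ruling out infinite Euclidean length in both cases.

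A useful preliminary observation is that the orientation-preserving isometries of $\D^2$ are Möbius transformations of the form $z\mapsto e^{i\theta}(z-a)/(1-\bar a z)$ with $|a|<1$, and these extend smoothly to the compact set $\overline{\D^2}$ with uniformly bounded Jacobians thereon. Consequently such isometries preserve the property of having finite Euclidean length (although not its actual value), so it suffices to verify the claim on a convenient isometric representative of each class.

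In the geodesic case, a hyperbolic isometry brings $\gamma$ onto (a subset of) the horizontal diameter, whose hyperbolic arc-length parametrization is $s\mapsto(\tanh(s/2),0)$; the Euclidean length equals $\int_{\R}\tfrac{1-\tanh^2(s/2)}{2}\,ds=2$, contradicting $\Ll_{\R^2}(\gamma)=\infty$. In the asymptotically geodesic case, \cref{prop:2.8}(c) yields the explicit arc-length curvature $\scurv(s)=\scurv_0/\cosh(rs)$, which decays exponentially, so $\gamma$ asymptotes to two well-defined geodesics as $s\to\pm\infty$. Combined with the identity $|\partial_s\gamma|_{\R^2}=(1-|\gamma|^2)/2$ for a hyperbolic arc-length parametrization, finiteness of the Euclidean length will follow once one shows that $1-|\gamma(s)|^2$ decays exponentially in $|s|$.

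To secure this exponential decay I would pass to the half-plane model via $\Phi$, invoke \cref{rem:on-as-geo-el} to pin down the two \emph{distinct} asymptotic limit points of $\Phi\circ\gamma$ on $\partial\H^2$, compare $\gamma$ to its two asymptotic geodesics through the first integral \cref{eq:char-el-0} (a Grönwall-type argument on $\scurv$ and its derivative), and then transfer the resulting rates back to $\D^2$ via $\Phi^{-1}$, whose Jacobian is bounded on compact subsets of $\H^2$ away from the preimage of infinity. The quantitative matching of $\gamma$ to its asymptotic geodesics---so that $\int_{\R}(1-|\gamma(s)|^2)\,ds$ actually converges---is the main technical obstacle, whereas the geodesic case is immediate.
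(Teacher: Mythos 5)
Your reduction is the same as the paper's: \cref{lem:basic-facts}(b) plus \cref{rem:el-sm-elen} leave only geodesics and asymptotically geodesic elastica, and your observation that every isometry of $\D^2$ is a M\"obius map that extends smoothly (hence Lipschitz) to the compact set $\overline{\D^2}$, so that finiteness of Euclidean length is isometry-invariant, is a clean way to pass to a single representative of each class --- the paper instead computes $|\partial_x(F_{0,c}\circ\gamma)|$ for every $c\in\D^2$ and checks integrability directly, so your reduction would in fact streamline that part. The geodesic case is fine.

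The gap is the asymptotically geodesic case, which is the entire content of the lemma and which you leave as a plan rather than a proof: you need $\int_\R\bigl(1-|\gamma(s)|^2\bigr)\dd s<\infty$ for the hyperbolic arc-length parametrization, you correctly identify exponential decay of $1-|\gamma(s)|^2$ as the required estimate, and then you defer it to an unexecuted ``Gr\"onwall-type'' comparison with asymptotic geodesics, explicitly calling it ``the main technical obstacle.'' Nothing in \cref{eq:char-el-0} or \cref{rem:on-as-geo-el} hands you this decay rate without further work; in particular, exponential decay of $\scurv$ does not by itself control how fast the curve approaches $\partial\D^2$ in arc length, and your appeal to ``two \emph{distinct} asymptotic limit points'' is off --- both ends of a globally defined asymptotically geodesic elastica converge to the \emph{same} boundary point (this is exactly why such elastica close up into the loop of \Cref{fig:elastica}, and it is used in the proof of \cref{thm:sf8-conv}). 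The missing step has an elementary resolution that your own setup makes available: since \cref{prop:2.8}(c) determines the curvature of the (unique up to isometry) asymptotically geodesic elastica, one may take the explicit arc-length representative $u(x)=\frac{1}{x^2+\cosh^2(x)}(x,\cosh(x))^t$ in $\H^2$, pull it back to $\D^2$ via $\Phi^{-1}$, and verify directly that $|\partial_x(\Phi^{-1}\circ u)|$ decays like $\mathrm{sech}(x)$, hence is integrable over $\R$; combined with your isometry reduction this closes the argument. As written, however, the decisive estimate is asserted rather than proved.
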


\section{Energy quantization of singular limits of the elastic flow}\label{sec:proof-main}

Recall that the elastic flow for closed curves and the clamped elastic flow in the hyperbolic plane are already defined in \cref{eq:clo-ef,eq:cla-ef} with \cref{eq:first-var}, respectively. As a feature of both flows, solutions are always global. Indeed, by \cite[Theorem 4.1]{dallacquaspener2017}, maximal solutions to \cref{eq:clo-ef} exist and are always global. Moreover, the existence of a maximal solution for \cref{eq:cla-ef} can be shown as in \cite{ruppspener2020}. Using the energy estimates in \cite[proof of Theorem~3.14]{schlierf2023}, one argues as in \cite[proof of Theorem~4.1]{dallacquaspener2017} that also any maximal solution to \cref{eq:cla-ef} is global in time. 

The singularities we investigate in this article thus form in infinite time in the sense that the hyperbolic length of $\gamma(t)$ is unbounded as $t\to\infty$, or, equivalently, that the solutions $\gamma(t)$ do not converge (after reparametrization) to a smooth elastica for $t\to\infty$.

\begin{remark}\label{rem:hyp-len-lb}
    For any $\gamma$ solving \cref{eq:clo-ef} or \cref{eq:cla-ef}, one has $\inf_{t\geq 0} \Ll_{\D^2}(\gamma(t))>0$.

    For \cref{eq:clo-ef} note that, by Fenchel's theorem, cf. \cite[Theorem 2.3]{dallacquaspener2017}, one has that $2\pi\leq \int_{\S^1} |\curv(t)|_g\dd s \leq \sqrt{\E(\gamma(t))} \sqrt{\Ll_{\D^2}(\gamma(t))}$ so that, using $\partial_t\E(\gamma(t))\leq 0$ by \cref{eq:first-var},
    \begin{equation}\label{eq:fenchel}
        \Ll_{\D^2}(\gamma(t))\geq \frac{4\pi^2}{\E(\gamma(t))}\geq \frac{4\pi^2}{\E(\gamma_0)}.
    \end{equation}
    Regarding \cref{eq:cla-ef}, this is somewhat more technical. In view of \eqref{eq:hyp-length}, we may work in $\H^2$ and briefly outline the argument for an immersion $u\colon[-1,1]\to\H^2$. If $u(-1)\neq u(1)$, then $\Ll_{\H^2}(u)\geq \mathrm{dist}_{\H^2}(u(-1),u(1))$. Moreover, if $u(-1)=u(1)$ and $\partial_su(-1)=\partial_su(1)$, then $u\in W^{2,2}(\S^1)$ and one can apply the above argument involving Fenchel's theorem after approximation with smooth immersions of $\S^1$. 
    
    Finally, if $u(-1)=u(1)=p\in\H^2$ but $\partial_su(-1)\neq\partial_su(1)$, let $\eta=|\partial_su(-1)-\partial_su(1)|_g$ and $v=\frac{1}{\eta \cdot p^{(2)}}(\partial_su(-1)-\partial_su(1))$. Then $|v|=1$ and, since $p^{(2)}=u^{(2)}(\pm1)$ and $\curv=\nabla_s\partial_su$, 
    \begin{align}
        \eta&=p^{(2)}|\langle\partial_su(-1),v\rangle_g - \langle\partial_su(1),v\rangle_g| \leq \int_{-1}^1 |\langle \curv, u^{(2)}v\rangle_g + \langle \partial_su,\nabla_s (u^{(2)}v)\rangle_g|\dd s \\
        &\leq \sqrt{\Ll_{\H^2}(u)}\sqrt{\E(u)}+\Ll_{\H^2}(u) \leq \frac12\eta + \Big(\frac{\E(u)}{2\eta}+1\Big)\Ll_{\H^2}(u),
    \end{align}
    using the fundamental theorem of calculus, compatibility of the connection $\nabla$ with the metric $g$ and $|\nabla_s (u^{(2)}v)|_g\leq 1$ by the explicit version of \eqref{eq:cov-der} in \cite[Equation~(10)]{dallacquaspener2017}. Indeed, for the last estimate, one computes that the only non-vanishing Christoffel symbols in $\H^2$ are $\Gamma^1_{12}\circ u=\Gamma^1_{21}\circ u=-\frac{1}{u^{(2)}}$ and $\Gamma^2_{11}\circ u=-\Gamma^2_{22}\circ u=\frac{1}{u^{(2)}}$ so that 
    \begin{equation}
        \nabla_s(u^{(2)}v)=\partial_s(u^{(2)}v) +v^{(1)} \begin{pmatrix}
            -\partial_su^{(2)}\\\partial_su^{(1)} 
        \end{pmatrix}- v^{(2)} \begin{pmatrix}
            \partial_su^{(1)}\\\partial_su^{(2)} 
        \end{pmatrix}=\partial_su^{(1)}\begin{pmatrix}
            -v^{(2)}\\v^{(1)}
        \end{pmatrix}. 
    \end{equation}
\end{remark}
 
The following remark shows how \cref{rem:hyp-len-lb} can be applied to obtain lower bounds on the Euclidean length in $\D^2$ as long as the curve stays away from the boundary $\partial\D^2$ at least at one point.

\begin{remark}\label{rem:euc-len-lb}
    Consider a sequence $(\gamma_n)_{n\in\N}$ of immersions of an interval or $\S^1$ into $\D^2$ with $\Ll_{\D^2}(\gamma_n)\geq \ell>0$. Furthermore, suppose that there exists a sequence $(x_n)_{n\in\N}$ such that $|\gamma_n(x_n)|^2<1-\varepsilon$ for some $\varepsilon>0$. Then $\inf_{n\in\N}\Ll_{\R^2}(\gamma_n)>0$.

    Indeed, if $\Ll_{\R^2}(\gamma_n)\to 0$ for a subsequence (without relabeling), then there exists $N_0\in\N$ such that, for $n\geq N_0$, $|\gamma_n|^2\leq 1-\frac12\varepsilon$. Consequently,
    \begin{equation}
        \Ll_{\R^2}(\gamma_n) = \int |\partial_x\gamma_n|\dd x = \int \frac{1-|\gamma_n|^2}{2} |\partial_x\gamma_n|_g\dd x \geq \frac{\varepsilon}{4} \ell,
    \end{equation}
    a contradiction!
\end{remark}

\subsection{Preliminaries: Meta results}

The following lemma is rather technical in its general form, but proves to be a useful tool and is applied at several points throughout this article. As it turns out, reparametrizations of curves in $\D^2$ or $\H^2$ by constant \emph{Euclidean} speed are quite practical when studying singularities. In fact, if the hyperbolic geometry blows up, then there is no reason why some Euclidean quantities should not behave better. Thus, we first introduce the following definition. 

\begin{definition}[Reparametrization by constant \emph{Euclidean} speed]
    Let $I,J\subseteq\R$ be compact intervals, $|I|,|J|>0$, and $\gamma\colon I\to\R^2$ an immersion. Then the reparametrization of $\gamma$ by constant Euclidean speed on $J$ is given by $\widetilde{\gamma}\colon J\to \R^2$, $\widetilde{\gamma}(x)=\gamma(\varphi(x))$ where $\varphi\colon J\to I$ is a smooth diffeomorphism and $\varphi^{-1}(y)=\min(J)+\frac{|J|}{\Ll_{\R^2}(\gamma)} \int_{I\cap (-\infty,y]} |\partial_x\gamma|\dd x$. 

    Moreover, if $\gamma\colon\S^1\to\R^2$, then identify $\S^1=\nicefrac{I\ }{\sim}$ for some compact interval $I\subseteq\R$ where the endpoints are equivalent by ``$\sim$'' and apply the above construction with $J=I$ to obtain $\widetilde{\gamma}\colon\S^1\to\R^2$. 
\end{definition}

\begin{lemma}[Compactness]\label{lem:int-lem}
    Denote by $I$ either a compact interval or $\S^1$ and let $\gamma\colon[0,\infty)\times I\to \D^2$ be a solution to either \cref{eq:clo-ef} or \cref{eq:cla-ef}. For sub-intervals $J_n\subseteq I$ and isometries $\phi_n\colon\D^2\to\D^2$ and $t_n\nearrow \infty$, write $\gamma_n=\phi_n\circ \gamma(t_n,\cdot)|_{J_n}$. Choose a compact interval $I'\subseteq\R$ with $|I'|>0$ and denote by $\widetilde{\gamma_n}\colon I'\to\D^2$ the reparametrizations of $\gamma_n$ by constant Euclidean speed.

    If $0<\ell\leq\Ll_{\R^2}(\gamma_n)\leq L<\infty$ for all $n\in\N$, then there exists $\widetilde{\gamma_{\infty}}\in W^{1,\infty}(I',\overline{\D^2})$ such that, after passing to a subsequence, $\widetilde{\gamma_n} \rightharpoonup^* \widetilde{\gamma_{\infty}} $ in $W^{1,\infty}(I',\R^2)$ and strongly in $C^{\alpha}(I')$ for any $\alpha\in(0,1)$. Furthermore, writing $Z=\{|\widetilde{\gamma_{\infty}}|=1\}$, one has $\widetilde{\gamma_{\infty}}|_{I'\setminus Z}\in C^{\infty}$ and $\widetilde{\gamma_n}\to\widetilde{\gamma_{\infty}}$ in $C^{\infty}_{\mathrm{loc}}(I'\setminus Z)$\footnote{By ${(\cdot)}_{\mathrm{loc}}$ we mean convergence on compact subintervals.}.
    
    Lastly, $\nabla\E(\widetilde{\gamma_{\infty}})=0$ pointwise on $I'\setminus Z$.
\end{lemma}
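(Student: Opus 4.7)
The plan splits into weak-$*$ compactness in $W^{1,\infty}$, an upgrade to $C^\infty_{\mathrm{loc}}$ regularity on $I' \setminus Z$, and passing to the limit in the Euler-Lagrange equation.

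First, by the reparametrization, $|\partial_x \widetilde{\gamma_n}| \equiv \Ll_{\R^2}(\gamma_n)/|I'| \in [\ell/|I'|, L/|I'|]$, so the sequence is uniformly bounded in $W^{1,\infty}(I', \R^2)$ and takes values in the Euclidean compact set $\overline{\D^2}$. Banach-Alaoglu and Arzelà-Ascoli yield, after a subsequence, $\widetilde{\gamma_n} \rightharpoonup^* \widetilde{\gamma_\infty}$ in $W^{1,\infty}$ and $\widetilde{\gamma_n} \to \widetilde{\gamma_\infty}$ uniformly; interpolating this uniform convergence against the Lipschitz bound produces $C^\alpha(I')$-convergence for any $\alpha \in (0,1)$. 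The limit lies in $\overline{\D^2}$.

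Second, I fix a compact subinterval $K \subseteq I' \setminus Z$. Uniform convergence supplies $\eta > 0$ with $|\widetilde{\gamma_n}| \leq 1 - \eta$ on $K$ for all large $n$, so the portions $\widetilde{\gamma_n}|_K$ lie in a compact subset of $\D^2$ where the hyperbolic and Euclidean metrics are smoothly equivalent. Since isometries and reparametrizations preserve $\E$ and $\E(\gamma(t))$ is non-increasing along the flow by \eqref{eq:first-var}, one has $\E(\widetilde{\gamma_n}|_K) \leq \E(\gamma_0)$, which together with the Euclidean-speed bounds yields a uniform $H^2(K, \R^2)$-bound. For higher regularity I would appeal to standard Dziuk-Kuwert-Schätzle-type smoothing for the elastic flow (cf.\ \cite{dallacquaspener2017}): on the time window $[t_n - 1, t_n]$, once the relevant portion of $\gamma(t, \cdot)$ is confined to a fixed compact subset of $\D^2$, one integrates the differential inequalities for the higher-order curvature energies $\int |\nabla_s^k \curv|_g^2\dd s$ to get uniform bounds on all $\nabla_s^k \curv$ at time $t_n$. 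A diagonal argument over a compact exhaustion of $I' \setminus Z$ then promotes the convergence to $C^\infty_{\mathrm{loc}}(I'\setminus Z)$.

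Third, for criticality, the gradient-flow identity from \eqref{eq:first-var} gives $\int_0^\infty \|\nabla\E(\gamma(t))\|_{L^2(\dd s)}^2\,\dd t \leq \E(\gamma_0) < \infty$, so by a mean-value argument there exist $\tau_n \in [t_n, t_n + 1]$ with $\|\nabla\E(\gamma(\tau_n))\|_{L^2} \to 0$. Using $\partial_t\gamma = -\nabla\E$ together with the smoothing of Step 2, the time-$\tau_n$ analogues of $\widetilde{\gamma_n}$ share the same $C^\infty_{\mathrm{loc}}(I' \setminus Z)$-limit $\widetilde{\gamma_\infty}$, and passing to the pointwise limit in $\nabla\E = 0$ yields $\nabla\E(\widetilde{\gamma_\infty}) = 0$ on $I' \setminus Z$. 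The hard part will be Step 2: transferring parabolic smoothing estimates --- naturally phrased for the full flow in its original parametrization --- to the truncated, isometrically transported and Euclidean-speed reparametrized family $\widetilde{\gamma_n}$. The key technical verification is that, for each compact $K \subseteq I' \setminus Z$, the portions of $\phi_n \circ \gamma(t, \cdot)$ corresponding to $K$ stay in a fixed compact subset of $\D^2$ uniformly for $t \in [t_n - 1, t_n]$; this should follow from combining the uniform convergence $\widetilde{\gamma_n} \to \widetilde{\gamma_\infty}$ with short-time control of how much points of the flow can move, extracted from $\partial_t \gamma = -\nabla\E$ and the integrated $L^2$-gradient bound.
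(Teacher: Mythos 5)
Your Step 1 is exactly the paper's argument and is fine. The problems are in Steps 2 and 3, where you choose a harder route than necessary and then leave precisely the hard parts unproven. For smoothness, you propose to re-run parabolic smoothing on the time windows $[t_n-1,t_n]$ and to localize in space to the portion of the curve corresponding to $K$; you yourself flag that the required confinement of $\phi_n\circ\gamma(t,\cdot)$ to a fixed compact subset of $\D^2$ \emph{uniformly over the time window} is unverified. This is a genuine gap: the integrated $L^2(\dd s)$-gradient bound controls displacement only in the hyperbolic $L^2$ sense with a time-dependent measure, and near $\partial\D^2$ small hyperbolic displacement does not prevent the relevant portion from drifting in the parametrization, so the confinement is not automatic. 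None of this localization is needed. The estimates (53) and (55) of \cite{dallacquaspener2017} (resp.\ the clamped analogues in \cite{schlierf2023}) are \emph{global in space and uniform in time}: $\sup_{t\geq0}\||\nabla_s^m\curv|_g(t)\|_\infty<\infty$ for every $m$. These quantities are intrinsic, hence invariant under the isometries $\phi_n$ and under reparametrization; the only place compactness of $K$ in $\D^2$ enters is in converting hyperbolic covariant derivatives into Euclidean $\partial_x^m$-bounds at the fixed times $t_n$, via the lower bound $1-|\widetilde{\gamma_n}|^2\geq\varepsilon/2$ on $K$ coming from the uniform convergence already established in Step 1. That is the paper's route, and it closes Step 2 with no time-window analysis at all.

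Step 3 has a second gap of the same kind. Passing to intermediate times $\tau_n\in[t_n,t_n+1]$ with $\|\nabla\E(\gamma(\tau_n))\|_{L^2}\to0$ forces you to prove that the $\tau_n$-curves (after their own truncation, isometry and reparametrization) have the \emph{same} $C^\infty_{\mathrm{loc}}$-limit as the $t_n$-curves; you assert this but the displacement estimate you invoke does not deliver it (it gives a bound on $\int_{t_n}^{\tau_n}\|\partial_t\gamma\|_{L^2(\dd s)}\dd t$, not uniform convergence of the reparametrized curves to the same limit). The paper sidesteps this entirely: with $h(t)=\int_I|\nabla\E(\gamma(t))|_g^2\dd s=-\partial_t\E(\gamma(t))\in L^1(0,\infty)$ and the interpolation estimates giving $\partial_t h\in L^\infty$, one gets $h(t)\to0$ for \emph{all} $t\to\infty$, so one may work directly at the times $t_n$ and pass to the limit using the smooth convergence from Step 2. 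As written, your proposal defers the two assertions that constitute the technical core of the lemma.
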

\begin{proof}
    By assumption, we have that $|\widetilde{\gamma_n}|\leq 1$ and, due to the parametrization by constant Euclidean speed on $I'$,
    \begin{equation}\label{eq:param-bd}        
        |\partial_x\widetilde{\gamma_n}| \equiv \frac{\Ll_{\R^2}(\widetilde{\gamma_n})}{|I'|} = \frac{\Ll_{\R^2}({\gamma_n})}{|I'|} \leq \frac{L}{|I'|},
    \end{equation}
    i.e., $(\widetilde{\gamma_n})_{n\in\N}\subseteq W^{1,\infty}(I')$ is uniformly bounded. Thus, there exists $\widetilde{\gamma_{\infty}}\in W^{1,\infty}(I',\R^2)$ such that, after passing to a subsequence without relabeling, $\widetilde{\gamma_n}\rightharpoonup^*\widetilde{\gamma_{\infty}}$ in $W^{1,\infty}(I')$ and $\widetilde{\gamma_n}\to\widetilde{\gamma_{\infty}}$ in $C^{\alpha}(I')$. In particular, $|\widetilde{\gamma_{\infty}}|\leq 1$ on $I'$. 

    In order to show smoothness of $\widetilde{\gamma_{\infty}}$ on $I'\setminus Z$ and convergence in $C^{\infty}_{\mathrm{loc}}(I'\setminus Z)$, we first recall the following energy estimates for derivatives of the curvature $\curv(t)$ of $\gamma(t)$ which can be obtained using interpolation techniques. One has
    \begin{equation}\label{eq:ines}
        \sup_{t\in[0,\infty)} \int_I |({\nabla_s^{\perp}})^m\curv|_g^2(t)\dd s + \||\nabla_{s}^m\curv|_g(t)\|_{\infty} < \infty \quad\text{for all $m\in\N_0$}.
    \end{equation}
    Indeed, if $I=\S^1$, i.e. for the closed elastic flow, this is \cite[(53) and (55)]{dallacquaspener2017}. For the clamped flow, see \cite[(3.41) and (3.43) with $a\equiv -2$]{schlierf2023}. The $L^{\infty}$-estimate in the case $m=0$ follows by interpolation using \cite[Proposition 4.1]{dallacquaspener2017} and only requires a uniform lower bound on the hyperbolic length of $\gamma(t)$ which is satisfied, cf. \cref{rem:hyp-len-lb}.

    Furthermore, by \cite[Lemma 2.7]{dallacquaspener2017}, for a normal vector field $N$ along $\widetilde{\gamma_n}$, one has that
    \begin{equation}\label{eq:lem2.7}
        \nabla_{x}^mN = |\partial_x\widetilde{\gamma_n}|_g^m\nabla_{s}^mN + \sum_{j=1}^{m-1} P_{m,j}(|\partial_x\widetilde{\gamma_n}|_g,\dots,\partial_x^{m-j}|\partial_x\widetilde{\gamma_n}|_g)\nabla_{s}^jN
    \end{equation}
    where $P_{m,j}$ is a polynomial of degree at most $m-1$. Moreover, 
    \begin{equation}\label{eq:der-g}
        \partial_x^m |\partial_x\widetilde{\gamma_n}|_g = \partial_x^m (\frac{2}{1-|\widetilde{\gamma_n}|^{2}}|\partial_x\widetilde{\gamma_n}|) = \frac{\Ll_{\R^2}(\widetilde{\gamma_n})}{|I'|} \partial_x^m \frac{2}{1-|\widetilde{\gamma_n}|^2}        
    \end{equation}
    is a polynomial in $\frac{1}{1-|\widetilde{\gamma_n}|^2}$ as well as $\widetilde{\gamma_n},\partial_x\widetilde{\gamma_n},\dots,\partial_x^m\widetilde{\gamma_n}$.
    
    Fix any $0<\varepsilon<1$ and define $I'_{\varepsilon}=\{|\widetilde{\gamma_{\infty}}|^2<1-\varepsilon\}$. Note that showing convergence in $C^{\infty}_{\mathrm{loc}}(I'\setminus Z)$ is equivalent to showing convergence in $C^{\infty}_{\mathrm{loc}}(I_{\varepsilon}')$ for all $0<\varepsilon<1$.

    The first step is showing that $|\partial_x^2\widetilde{\gamma_n}|$ is uniformly bounded. To this end, note that
    \begin{align}        
        \curv_n &= \nabla_s\partial_s\widetilde{\gamma_n} =  \frac{|I'|}{\Ll_{\R^2}(\gamma_n)}\frac{1-|\widetilde{\gamma_n}|^2}{2} \nabla_{x}\Bigl(\frac{|I'|}{\Ll_{\R^2}(\gamma_n)}\frac{(1-|\widetilde{\gamma_n}|^2)\partial_x\widetilde{\gamma_n}}{2}\Bigr)\\
        &=  \Bigl( \frac{|I'|}{\Ll_{\R^2}(\gamma_n)}\frac{1-|\widetilde{\gamma_n}|^2}{2} \Bigr)^2 \nabla_x\partial_x\widetilde{\gamma_n} -  \Bigl( \frac{|I'|}{\Ll_{\R^2}(\gamma_n)}\Bigr)^2 \frac{1-|\widetilde{\gamma_n}|^2}{2} \langle\widetilde{\gamma_n},\partial_x\widetilde{\gamma_n}\rangle \partial_x\widetilde{\gamma_n}
    \end{align}
    and, by \cref{eq:ines}, $|\curv_n|\leq |\curv_n|_g \leq \||\curv(t_n,\cdot)|_g\|_{\infty} \leq C$, using the invariance of the hyperbolic norm of the curvature under isometries of $\D^2$ and reparametrizations. Since $\widetilde{\gamma_n}\to\widetilde{\gamma_{\infty}}$ uniformly, one has that $1-|\widetilde{\gamma_n}|^2>\frac{\varepsilon}{2}$ on $I'_{\varepsilon}$ for $n$ sufficiently large. For such values of $n$, using \cref{eq:param-bd}, one finds 
    \begin{align}
        |\nabla_x\partial_x\widetilde{\gamma_n}|\leq \frac{16}{\varepsilon^2}\Bigl(\frac{L}{|I'|}\Bigr)^2\ (C + 1)\quad\text{on $I'_{\varepsilon}$}
    \end{align}
    so that particularly also $\sup_{n\in\N} \|\partial_x^2\widetilde{\gamma_n}\|_{L^{\infty}(I'_{\varepsilon})} < \infty$, using \cref{eq:cov-der}.

    With \cref{eq:lem2.7} applied to $N=\curv_n$ with $m-2$ in place of $m$, using also \cref{eq:der-g}, analogous computations inductively yield $\sup_{n\in\N}\|\partial_x^m\widetilde{\gamma_n}\|_{L^{\infty}(I'_{\varepsilon})} < \infty$ from \cref{eq:param-bd,eq:ines} for all $m\geq 3$. Therefore, the smooth convergence on compact intervals in $I'_{\varepsilon}$ follows. 
    
    We now argue that $|\partial_x\widetilde{\gamma_{\infty}}|>0$ on $I'\setminus Z$, i.e. $\widetilde{\gamma_{\infty}}$ is an immersion on $I'\setminus Z$. Indeed, by the parametrization of $\widetilde{\gamma_{n}}$ and the smooth convergence on compact intervals in $I'\setminus Z$, $|\partial_x\widetilde{\gamma_{\infty}}|\equiv \lim_{n\to\infty}\Ll_{\R^2}(\widetilde{\gamma_n})/|I'|\geq\ell/|I'|>0$ is constant in $I'\setminus Z$.

    Lastly, we verify the property that $\nabla\E(\widetilde{\gamma_{\infty}})=0$ on $I'\setminus Z$. To this end, using \cref{eq:first-var}, define $h\colon[0,\infty)\to\R$ by 
    \begin{equation}\label{eq:lim-is-el-poin}
        h(t) = \int_{I} |\nabla\E(\gamma(t))|_g^2\dd s = - \partial_t\E(\gamma(t))\in L^1(0,\infty).
    \end{equation}
    As in \cite[p.22]{dallacquaspener2017}, one uses the interpolation inequality in \cite[Proposition 4.3]{dallacquaspener2017} and \cref{eq:ines} to show $\partial_th\in L^{\infty}(0,\infty)$. Therefore, $h(t)\to 0$ for $t\to\infty$. So, using again the invariance of $\nabla\E$ and $|\cdot|_g$ with respect to isometries of $\D^2$ and reparametrizations,
    \begin{align}
        \int_{I'_{\varepsilon}} |\nabla\E(\widetilde{\gamma_{\infty}})|_g^2\dd s &= \lim_{n\to\infty} \int_{I'_{\varepsilon}} |\nabla\E(\widetilde{\gamma_{n}})|_g^2\dd s \\
        &\leq \lim_{n\to\infty} \int_{I} |\nabla\E(\gamma(t_n))|_g^2\dd s = \lim_{n\to\infty} h(t_n)=0.\qedhere
    \end{align}
\end{proof}

\begin{theorem}\label{thm:euc-len-poin}
    Let $\gamma$ and $I$ be as in \cref{lem:int-lem}. Furthermore, for each $t\geq 0$, let $\phi_t\colon\D^2\to\D^2$ be any isometry. Then $\sup_{t\in[0,\infty)}\Ll_{\R^2}(\phi_t\circ\gamma(t))<\infty$.
\end{theorem}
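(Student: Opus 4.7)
Argue by contradiction. Suppose there exist isometries $(\phi_n)_{n\in\N}$ of $\D^2$ and times $t_n\nearrow\infty$ with $L_n\defeq\Ll_{\R^2}(\phi_n\circ\gamma(t_n))\to\infty$, and set $\gamma_n=\phi_n\circ\gamma(t_n)$. The strategy is to reparametrize each $\gamma_n$ by constant Euclidean speed equal to $1$, extract compactness limits via \cref{lem:int-lem} on fixed-length windows $[0,M]$, glue them by a diagonal argument into a continuous curve $\widetilde{\gamma_\infty}\colon[0,\infty)\to\overline{\D^2}$ that is a piecewise smooth elastica with only finitely many singularities, and then derive a contradiction with \cref{lem:el-fin-len}.

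Fix a basepoint $x_0\in I$. For each $M>0$ and each $n$ with $L_n\geq M$, let $J_n^{(M)}\subseteq I$ be the sub-arc starting at $x_0$ (wrapping along $\S^1$ in the closed case) along which $\gamma_n$ has Euclidean length exactly $M$, and let $\widetilde{\gamma_n}\colon[0,M]\to\D^2$ be the reparametrization of $\gamma_n|_{J_n^{(M)}}$ by constant Euclidean speed, so $|\partial_x\widetilde{\gamma_n}|\equiv 1$. Then \cref{lem:int-lem} with $I'=[0,M]$ gives, along a subsequence, a limit $\widetilde{\gamma_\infty^{(M)}}\in W^{1,\infty}([0,M],\overline{\D^2})$ such that $\widetilde{\gamma_\infty^{(M)}}|_{[0,M]\setminus Z^{(M)}}$ is a smooth elastica of constant Euclidean speed $1$, where $Z^{(M)}\defeq\{|\widetilde{\gamma_\infty^{(M)}}|=1\}\cap(0,M)$. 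Writing $E_0\defeq\E(\gamma_0)$, monotonicity of $\E$ along the flow gives $\sup_n\E(\widetilde{\gamma_n}|_{[0,M]})\leq E_0$, so \cref{cor:ch-ve-poin} yields the $M$-independent bound $|Z^{(M)}|\leq\lfloor E_0/8\rfloor$.

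The sub-arcs $J_n^{(M)}$ are nested as $M$ increases, so the corresponding unit-Euclidean-speed reparametrizations are restrictions of each other; a diagonal extraction over $M=1,2,3,\dots$ therefore delivers a single subsequence and consistent limits which glue to a continuous curve $\widetilde{\gamma_\infty}\colon[0,\infty)\to\overline{\D^2}$ with $\widetilde{\gamma_\infty}|_{[0,M]}=\widetilde{\gamma_\infty^{(M)}}$ for every $M$. The singular set $Z\defeq\{|\widetilde{\gamma_\infty}|=1\}$ is finite thanks to the $M$-uniform cardinality bound on $Z^{(M)}$, and on each connected component of $[0,\infty)\setminus Z$ the curve $\widetilde{\gamma_\infty}$ is a smooth elastica parametrized with constant Euclidean speed $1$. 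A standard lower-semicontinuity argument (smooth convergence on compacta in $[0,M]\setminus Z^{(M)}$ combined with monotone convergence as $M\to\infty$) yields $\E(\widetilde{\gamma_\infty}|_{[0,\infty)\setminus Z})\leq E_0<\infty$.

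Now set $R\defeq\max Z$ if $Z\neq\emptyset$ and $R\defeq 0$ otherwise. Then $\widetilde{\gamma_\infty}|_{(R,\infty)}$ is a smooth elastica in $\D^2$ with infinite Euclidean length (unit Euclidean speed on an unbounded interval) and finite elastic energy, directly contradicting \cref{lem:el-fin-len}. The main technical step is the diagonal construction itself: one has to fix a common basepoint so that the window-limits for different $M$ assemble consistently into one curve, and verify the uniform-in-$M$ singularity bound; once this is in place, the elastica structure from \cref{lem:int-lem} and the energy quantization from \cref{cor:ch-ve-poin} combine cleanly to produce the object to which \cref{lem:el-fin-len} applies.
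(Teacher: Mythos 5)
Your proposal is correct and follows essentially the same route as the paper's proof: reparametrize by unit Euclidean speed, apply \cref{lem:int-lem} on windows $[0,M]$ (resp. $[0,R]$ in the paper), glue by a diagonal argument, bound the singular set uniformly via \cref{cor:ch-ve-poin}, and contradict \cref{lem:el-fin-len} with the tail elastica of infinite Euclidean length and finite energy. The only cosmetic difference is that you parametrize nested sub-arcs of prescribed Euclidean length from a fixed basepoint, whereas the paper reparametrizes the whole curve by Euclidean arc-length and restricts; these yield the same windows.
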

\begin{remark}
    This theorem reveals the main advantage of working in the Poincar\'e model $\D^2$ for the hyperbolic plane: Even after applying isometries to an evolution by elastic flow, the Euclidean lengths remain uniformly bounded in time. Therefore, even if there are singularities in the sense that the hyperbolic length in $\D^2$ is unbounded in time, one can always reparametrize by constant Euclidean speed to obtain uniform control of the parametrization. This clearly fails in $\H^2$ as scalings are isometries.
\end{remark}
\begin{proof}[Proof of \cref{thm:euc-len-poin}.]
    Suppose there exists a sequence of times $t_n\nearrow \infty$ with $\Ll_{\R^2}(\phi_{t_n}\circ \gamma(t_n))\to\infty$. Denote by $\widetilde{\gamma_n}\colon [0,\Ll_{\R^2}(\phi_{t_n}\circ\gamma(t_n))]\to \D^2$ the reparametrization of $\phi_{t_n}\circ\gamma(t_n)$ by Euclidean arc-length for each $n\in\N$, that is, $|\partial_x\widetilde{\gamma_n}|\equiv 1$.

    For $R\in\N$, there is $N_0\in\N$ such that, for $n\geq N_0$, $[0,R]\subseteq [0,\Ll_{\R^2}(\phi_{t_n}\circ\gamma(t_n))]$. Applying \cref{lem:int-lem} to the sequence $(\widetilde{\gamma_n}|_{[0,R]})_{n\geq N_0}$ yields that there exists ${}^{(R)}\widetilde{\gamma_{\infty}}\colon [0,R]\to\R^2$ such that $\widetilde{\gamma_n}|_{[0,R]}\to{}^{(R)}\widetilde{\gamma_{\infty}}$ in the sense of \cref{lem:int-lem}, after passing to a subsequence also depending on $R$. Iterating this procedure for $R=1,2,\dots$, passing to further subsequences in each step, one can achieve that ${}^{(R+1)}\widetilde{\gamma_{\infty}}$ is an extension of ${}^{(R)}\widetilde{\gamma_{\infty}}$, for all $R\in\N$. By choosing a suitable diagonal sequence without relabeling, one can achieve that $\widetilde{\gamma_{n}}|_{[0,R]}\to {}^{(R)}\widetilde{\gamma_{\infty}}$ in the sense of \cref{lem:int-lem}, for each $R\in\N$.
    
    So writing $\widetilde{\gamma_{\infty}}(x)={}^{(R)}\widetilde{\gamma_{\infty}}(x)$ for any $x\in[0,\infty)$ and any $R>x$, \cref{lem:int-lem} yields that $\widetilde{\gamma_{\infty}}\in W^{1,\infty}([0,\infty),\R^2)\cap C^{0}([0,\infty),\R^2)$ and, writing $Z=\{|\widetilde{\gamma_{\infty}}|=1\}$, that $\widetilde{\gamma_{\infty}}|_{[0,\infty)\setminus Z}\in C^{\infty}$ pointwise solves $\nabla\E(\widetilde{\gamma_{\infty}})=0$ on $[0,\infty)\setminus Z$.

    In particular, for any compact interval $J\subseteq[0,\infty)$, $\widetilde{\gamma_n}|_J\to\widetilde{\gamma_{\infty}}|_J$ uniformly and, for any compact interval $J\subseteq [0,\infty)\setminus Z$, $\widetilde{\gamma_n}|_J\to\widetilde{\gamma_{\infty}}|_J$ in $C^{\infty}$.

    \textbf{Step 1.} $Z$ consists of only finitely many points. Indeed, first choose $E\in\N$ such that $8E\leq \E(\gamma_0)<8(E+1)$. Fixing any $R>0$, we then show that $Z\cap (0,R)$ has at most $E$ elements. By \cref{lem:int-lem}, one has $\widetilde{\gamma_n}|_{[0,R]}\rightharpoonup^*\widetilde{\gamma_{\infty}}|_{[0,R]}$ in $W^{1,\infty}([0,R])$ and $\widetilde{\gamma_n}|_{[0,R]}\to\widetilde{\gamma_{\infty}}|_{[0,R]}$ uniformly on $[0,R]$. Moreover, $|\partial_x\widetilde{\gamma_n}|\equiv 1$. Lastly, $\E(\widetilde{\gamma_n}|_{[0,R]})\leq \E(\gamma(t_n))\leq \E(\gamma_0)$. \cref{cor:ch-ve-poin} then yields that $Z\cap(0,R)$ consists of at most $E$ elements. Since this is true for all $R$, $Z$ itself consists of at most $E$ elements. Write $Z=\{x_1<\dots<x_m\}$ with $0\leq m\leq E$.
    
    \textbf{Step 2.} Stumbling upon a contradiction. Since $|\partial_x\widetilde{\gamma_{\infty}}|\equiv 1$ on $[0,\infty)\setminus Z$, one obtains that 
    \begin{equation}
        \begin{cases}
            \widetilde{\gamma_{\infty}}|_{(x_m,\infty)}&\text{if }Z\neq\emptyset\\
            \widetilde{\gamma_{\infty}}&\text{if }Z=\emptyset
        \end{cases}
    \end{equation}
    is an elastica with infinite Euclidean length but finite elastic energy in $\D^2$. However, this contradicts \cref{lem:el-fin-len}.
\end{proof}

\subsection{Proofs of the general convergence, quantization and blow-up results}

In this section, we present proofs for \cref{thm:quan-clo,thm:quan-cla,prop:bu-clo}.

\begin{proof}[Proof of \cref{thm:quan-clo}.]
    If $\inf_{n\in\N}\Ll_{\R^2}(\gamma_n)>0$, choose $\phi_n(z)=z$ for all $n\in\N$ and $z\in\D^2$. Otherwise, for each $n\in\N$, pick any point $p_n\in \gamma(t_n,\S^1)$ and any isometry $\phi_n\colon\D^2\to\D^2$ with $\phi_n(p_n)=(0,0)^t$. For the existence of such an isometry $\phi_n$, cf. \cite[Lemma 2.9]{muellerspener2020}. As in the statement, set $\gamma_n=\phi_n\circ\gamma(t_n)$. By \cref{thm:euc-len-poin}, one has that
    \begin{equation}
        \Ll_{\R^2}(\gamma_n) \leq L<\infty\quad\text{for all $n\in\N$}.
    \end{equation}
    
    \textbf{Claim.} One has $\inf_{n\in\N}\Ll_{\R^2}(\gamma_n)>0$. This is either already satisfied or $\phi_n$ above is chosen such that $(0,0)^t\in \gamma_n(\S^1)$ for all $n\in\N$. Then the claim follows by \cref{rem:hyp-len-lb,rem:euc-len-lb}.

    Now writing $\widetilde{\gamma_n}$ for the reparametrizations of $\gamma_n$ by constant Euclidean speed as in the statement, choosing $I'=[0,1]$, \cref{lem:int-lem} yields that there exists $\widetilde{\gamma_{\infty}}\in W^{1,\infty}(\S^1)$ with $\widetilde{\gamma_n}\to\widetilde{\gamma_{\infty}}$ uniformly. Furthermore, for $Z=\{|\widetilde{\gamma_{\infty}}|=1\}$, we have that $\widetilde{\gamma_{\infty}}|_{\S^1\setminus Z}$ is smooth and $\nabla\E(\widetilde{\gamma_{\infty}})=0$ on $\S^1\setminus Z$. 

    Moreover, using \cref{cor:ch-ve-poin}, $Z$ consists of at most finitely many elements. So write $Z=\{x_1,\dots,x_{m}\}$ for some $m\in\N_0$.

    Now, we verify \cref{eq:en-ie-clo}. To this end, consider any $1\leq j\leq m$ and $\delta>0$ such that $\{(x_j-\delta,x_j+\delta)\}_{j=1}^m$ are pairwise disjoint. By the definition of $Z$, one has that $\widetilde{\gamma_n}(x_j-\delta)$ and $\widetilde{\gamma_n}(x_j+\delta)$ converge to some point in $\D^2$, respectively. In particular, they remain bounded away from $\partial \D^2=\S^1\subseteq\R^2$. Therefore, \cref{rem:eich-grun-poin} yields that $\limsup_{n\to\infty}\E(\gamma_n|_{[x_j-\delta,x_j+\delta]})\geq 8$, for all $1\leq j\leq m$ and $\delta>0$ sufficiently small. That is, for any $\delta>0$ sufficiently small,
    \begin{equation}
        \limsup_{n\to\infty} \E(\widetilde{\gamma_n}|_{B_{\delta}(Z)})\geq 8\cdot m.
    \end{equation}
    Since $\widetilde{\gamma_n}\to\widetilde{\gamma_{\infty}}$ smoothly on $\S^1\setminus B_{\delta}(Z)$, we have that 
    $$\E(\widetilde{\gamma_n}|_{\S^1\setminus B_{\delta}(Z)}) \to \E(\widetilde{\gamma_{\infty}}|_{\S^1\setminus B_{\delta}(Z)}).$$
    Altogether, one concludes that, for any $\delta>0$ sufficiently small,
    \begin{align}
        \E(\widetilde{\gamma_{\infty}}|_{\S^1\setminus B_{\delta}(Z)}) &= \liminf_{n\to\infty} (\E(\widetilde{\gamma_n}) - \E(\widetilde{\gamma_n}|_{B_{\delta}(Z)})) \\
        &\leq \E(\gamma_0) - \limsup_{n\to\infty} \E(\widetilde{\gamma_n}|_{B_{\delta}(Z)}) \leq \E(\gamma_0) - 8\cdot m.\label{eq:com-ei} 
    \end{align}
    Letting $\delta\searrow 0$, \cref{eq:en-ie-clo} follows.

    Write now $\S^1\setminus Z = \bigcup_{i=1}^mI_i$ for open intervals $I_i$ as in the statement. As argued above, each $\widetilde{\gamma_{\infty}}|_{I_i}$ is an elastica with $\lim_{x\to\partial I_i}|\widetilde{\gamma_{\infty}}(x)|=1$, using $\partial I_i\subseteq Z$. In particular, $\Ll_{\D^2}(\widetilde{\gamma_{\infty}}|_{I_i})=\infty$ using \cref{lem:basic-facts}(a). \cref{rem:el-sm-elen} now yields that $\widetilde{\gamma_{\infty}}|_{I_i}$ either is an asymptotically geodesic elastica or a geodesic, using \cref{eq:en-ie-clo}.
\end{proof}

\begin{proof}[Proof of \cref{thm:quan-cla}.]
    As aforementioned, one essentially proceeds in the same way as in the proof of \cref{thm:quan-clo}, the only difference being that one can immediately obtain that $\Ll_{\R^2}(\widetilde{\gamma_n})$ is uniformly bounded from below --- without applying any isometries $\phi_n$. This is simply due to the boundary conditions in \cref{eq:cla-ef} and \cref{rem:hyp-len-lb,rem:euc-len-lb}. A uniform upper bound for the Euclidean lengths is again given by \cref{thm:euc-len-poin}.
    
    With this observation, one can proceed exactly as in the proof of \cref{thm:quan-clo}.
\end{proof}

\begin{proof}[Proof of \cref{prop:bu-clo}.]
    By composing $\phi_n$ with a suitable rotation, one can without loss of generality suppose that $\widetilde{\gamma_{\infty}}(x_j)=(0,-1)^t\in\partial\D^2$ and consider $u_n=\Phi\circ\widetilde{\gamma_n}|_{[x_j-\delta,x_j+\delta]}$, using \cref{eq:def-Phi}. Then there exists $\hat{y_n}\in [x_j-\delta,x_j+\delta]$ with $u_n^{(2)}(\hat{y_n})=\min_{[x_j-\delta,x_j+\delta]}u_n^{(2)}$. Define 
    \begin{equation}
        \psi_n = \Phi^{-1}\circ \left((z^{(1)},z^{(2)})^t \mapsto \frac{1}{u_n^{(2)}(\hat{y_n})} (z^{(1)}-u_n^{(1)}(\hat{y_n}),z^{(2)}) \right) \circ \Phi\colon\D^2\to\D^2.
    \end{equation}
    Then $\psi_n$ is an isometry as a composition of isometries. Furthermore, setting $\tau_n=\psi_n\circ\widetilde{\gamma_n}$, one has $\tau_n(\hat{y_n})=\Phi^{-1}((0,1)^t)=(0,0)^t$ and
    \begin{equation}\label{eq:bub-3}
        \Phi^{(2)}\circ\tau_n = \frac{1}{u_n^{(2)}(\hat{y_n})} u_n^{(2)}\geq 1.
    \end{equation}
    A brief computation using \cref{eq:def-Phi} reveals $\{\Phi^{-1}(x,1):x\in\R\} = \partial B_{\frac12}((0,\frac12)^t) \setminus \{(0,1)^t\}$ which then yields
    \begin{equation}
        \{\Phi^{-1}(x,y):x\in\R\text{ and }y\geq 1\} =\overline{B}_{\frac12}\big(\Big(0,\frac12\Big)^t\big)\setminus \{(0,1)^t\}.
    \end{equation}
    Therefore, by \eqref{eq:bub-3},
    \begin{equation}\label{eq:bub-2}
        \tau_n([x_j-\delta,x_j+\delta])\subseteq \overline{B}_{\frac12}\big(\Big(0,\frac12\Big)^t\big)\setminus \{(0,1)^t\}\subseteq\D^2.
    \end{equation}
    On the one hand, we have $u_n(x_j\pm\delta)=\Phi(\widetilde{\gamma_n}(x_j\pm\delta))\to\Phi(\widetilde{\gamma_{\infty}}(x_j\pm\delta))\in\H^2$, using $x_j\pm\delta\notin Z$. Moreover, $\Phi(\widetilde{\gamma_{\infty}}(x_j))=\Phi((0,-1)^t)=(0,0)^t$ which yields $u_n^{(2)}(\hat{y_n})\to 0$ for $n\to\infty$. Thus,
    \begin{equation}
        \Phi^{(2)}(\tau_n(x_j\pm\delta)) = \frac{1}{u_n^{(2)}(\hat{y_n})} u_n^{(2)}(x_j\pm\delta)\to\infty\quad\text{for $n\to\infty$.}
    \end{equation}
    Consequently, 
    \begin{equation}\label{eq:bub-1}
        \widetilde{\tau_n}(\pm1)=\tau_n(x_j\pm\delta)\to (0,1)^t \quad\text{for $n\to\infty$},
    \end{equation} using \cref{eq:inv-Phi}. With $\tau_n(\hat{y_n})=(0,0)^t$, we obtain $\inf_{n\in\N}\Ll_{\R^2}(\widetilde{\tau_n})=\inf_{n\in\N}\Ll_{\R^2}(\tau_n)>0$.

    Using \cref{lem:int-lem,thm:euc-len-poin}, there exists $\widetilde{\tau_{\infty}}\colon[-1,1]\to\overline{\D^2}$ with $\widetilde{\tau_n}\to\widetilde{\tau_{\infty}}$ uniformly on $[-1,1]$ and smoothly on compact subsets of $[-1,1]\setminus Z'$ where $Z'=\{|\widetilde{\tau_{\infty}}|=1\}$ and $\pm1\in Z'$ by \cref{eq:bub-1}. Moreover, $\widetilde{\tau_n}\rightharpoonup^*\widetilde{\tau_{\infty}}$ in $W^{1,\infty}([-1,1])$ and $\nabla\E(\widetilde{\tau_{\infty}})=0$ on $[-1,1]\setminus Z'$. Then \cref{cor:ch-ve-poin} implies that $Z'$ consists of only finitely many elements. Write $Z'=\{y_0<\dots<y_{n_j}\}$ for $n_j\in\N$ where $y_0=-1$ and $y_{n_j}=1$ as explained above. 
    
    Using \cref{eq:bub-2} and the uniform convergence $\widetilde{\tau_n}\to\widetilde{\tau_{\infty}}$, it follows that 
    $$
        \widetilde{\tau_{\infty}}(y_j)\in \partial\D^2\cap \overline{B}_{\frac12}\big(\Big(0,\frac12\Big)^t\big)=\{(0,1)^t\}.
    $$ 
    Altogether, $\widetilde{\tau_{\infty}}|_{(y_{i-1},y_i)}$ is an elastica with both endpoints equal to $(0,1)^t\in\partial\D^2$ and finite elastic energy. \cref{rem:el-sm-elen} yields that each $\widetilde{\tau_{\infty}}|_{(y_{i-1},y_i)}$ necessarily parametrizes an asymptotically geodesic elastica. Thus,
    \begin{equation}
        \limsup_{n\to\infty} \E(\widetilde{\tau_n}|_{(y_{i-1},y_i)}) \geq \E(\widetilde{\tau_{\infty}}|_{(y_{i-1},y_i)}) = 8.
    \end{equation}
    In particular, $\limsup_{n\to\infty}\E(\widetilde{\gamma_n}|_{(x_j-\delta,x_j+\delta)})=\limsup_{n\to\infty}\E(\widetilde{\tau_n})\geq 8n_j$ which yields \cref{eq:impr-ei} with a similar computation as in \cref{eq:com-ei}.
\end{proof}

\section{Singular behavior of elastic flow of symmetric figure-eights}\label{sec:fig-eights}

This section is dedicated to proving \cref{thm:sf8-conv}. As in the introduction, we make the identification $\S^1=\nicefrac{[-2,2]\ }{\sim}$ where we define $\sim$ by $x\sim y\vcentcolon\iff (x=y) \lor (\{x,y\}=\{-2,2\})$. Firstly, we specify the notion of symmetric figure-eights.

\begin{definition}
    An immersion $\gamma\colon\S^1\to\D^2$ is called \emph{symmetric figure-eight} if its winding number vanishes and
    \begin{itemize}
        \item[(S1)] $\gamma(-x)=-\gamma(x)$ for all $x\in\S^1$ and 
        \item[(S2)] $\gamma(\pm1-x)=\begin{pmatrix}
            -1&0\\0&1
        \end{pmatrix} \gamma(\pm1+x)$ for all $x\in[-1,1]$. 
    \end{itemize}
\end{definition}

If $\gamma$ satisfies (S1), one obtains $\gamma(0)=\gamma(\pm2)=0$. Furthermore,
\begin{equation}\label{eq:s1-and-curv}
    \curv(-x)=\nabla_s(\partial_s\gamma(\cdot))(-x)\underset{(S1)}{=} -\nabla_s(\partial_s(\gamma(-\ \cdot)))(-x) = -(\nabla_s\partial_s\gamma)(x) = -\curv(x)
\end{equation}
so that (S1) also implies $\curv(0)=\curv(\pm2)=0$.
\begin{remark}[Existence of symmetric figure-eights]
    In fact, there is an --- in the context of the elastic flow --- particularly noteworthy class of such symmetric figure-eights. Namely, the so-called $\lambda$-figure-eights are already used in \cite{muellerspener2020,dallacquamullerschatzlespener2020} to prove that the elastic flow (and the Willmore flow of tori of revolution) can develop singularities above a certain energy threshold below which convergence can be proved.

    A $\lambda$-figure-eight is a closed, $\lambda$-constrained elastica (cf. \cref{def:elastica}) with vanishing winding number. It is shown in \cite[Proposition 6.2 and Corollary 6.4]{muellerspener2020} that, for any $\lambda\in(0,64/\pi^2-2)$, there exists a $\lambda$-figure-eight $\gamma_{\lambda}$ and $\E(\gamma_{\lambda})\searrow 16$ for $\lambda\searrow 0$. Moreover, one can choose $\gamma_{\lambda}$ such that both (S1) and (S2) are satisfied, cf. \cite[Lemma 5.15 and Remark 5.21]{schlierf2023}. 

    For an illustration of $\lambda$-figure-eights, see \Cref{fig:lamb-fig-eights}. Strikingly, one realizes that the illustrated sequence $(\gamma_{\lambda})_{\lambda\in (0,64/\pi^2-2)}$ has the same convergence behavior for $\lambda\searrow 0$ as the elastic flow in \cref{thm:sf8-conv} starting in any fixed $\lambda$-figure-eight for $t\to\infty$.
\end{remark}

An important step in proving \cref{thm:sf8-conv} is the following proposition which shows that the center $(0,0)^t\in\D^2$ is a stationary point along the evolution of the elastic flow of symmetric figure-eights.

\begin{proposition}\label{prop:S1-for-flow}
    Let $\gamma_0\colon\S^1\to\D^2$ be an immersion satisfying (S1) and let $\gamma\colon[0,\infty)\times\S^1\to\D^2$ be the solution to the elastic flow \cref{eq:clo-ef}. Then $\gamma(t,-x)=-\gamma(t,x)$ for all $t\geq 0$ and $x\in\S^1$. In particular,
    \begin{equation}\label{eq:cut-nat-bdr}
        \gamma(t,0)=\gamma(t,\pm2) = (0,0)^t,\quad\curv(t,0)=\curv(t,\pm2)=(0,0)^t 
    \end{equation}
    for all $t\geq 0$. Moreover, if $\gamma_0\colon\S^1\to\S^1$ is an immersion satisfying (S2), then also $\gamma(t)$ satisfies (S2) for all $t\geq 0$.
\end{proposition}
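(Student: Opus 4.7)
The plan is to apply uniqueness of smooth solutions to \eqref{eq:clo-ef} together with invariance of the flow under isometries of $\D^2$ and reparametrizations of $\S^1$. Concretely, if $F\colon\D^2\to\D^2$ is a hyperbolic isometry and $\varphi\colon\S^1\to\S^1$ is a diffeomorphism, then $F\circ\gamma(t,\cdot)\circ\varphi$ solves \eqref{eq:clo-ef} whenever $\gamma$ does, since both $\E$ and the $L^2(\dd s)$-pairing are geometric and transform covariantly. The observation is that both symmetry conditions (S1) and (S2) correspond to invariance of the curve under a specific such combination.

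For (S1), I would set $\widetilde{\gamma}(t,x)\defeq -\gamma(t,-x)$. The antipodal map $z\mapsto -z$ is the hyperbolic rotation by $\pi$ and hence an isometry of $\D^2$, while $x\mapsto -x$ is a (smooth, orientation-reversing) diffeomorphism of $\S^1$, so $\widetilde{\gamma}$ is another smooth solution of \eqref{eq:clo-ef}. By (S1) for $\gamma_0$ we have $\widetilde{\gamma}(0,\cdot)=\gamma_0$, so uniqueness of smooth solutions (available from \cite{dallacquaspener2017}) yields $\widetilde{\gamma}\equiv\gamma$, which is exactly (S1) at every $t\geq 0$. Specializing $x=0$ and $x=\pm 2$ (using the identification $-2\sim 2$) gives $\gamma(t,0)=\gamma(t,\pm 2)=(0,0)^t$, and the computation already carried out in \eqref{eq:s1-and-curv}, applied pointwise in $t$, yields $\curv(t,0)=\curv(t,\pm 2)=(0,0)^t$.

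For (S2) I would first rewrite the condition as a single identity. Substituting $y=1+x$ in the $+1$-branch and $y=-1+x$ in the $-1$-branch of (S2), and identifying $-2\sim 2$ in $\S^1$, one sees that (S2) is equivalent to $\gamma_0(2-y)=R_v\gamma_0(y)$ for all $y\in\S^1$, where $R_v=\begin{pmatrix}-1&0\\ 0&1\end{pmatrix}$ is reflection about the vertical axis, i.e.\ again a hyperbolic isometry of $\D^2$. Since $x\mapsto 2-x$ is a diffeomorphism of $\S^1$, the curve $\widetilde{\gamma}(t,x)\defeq R_v\gamma(t,2-x)$ is another smooth solution of \eqref{eq:clo-ef}; the rewritten form of (S2) gives $\widetilde{\gamma}(0,\cdot)=\gamma_0$, so uniqueness again delivers $\widetilde{\gamma}\equiv\gamma$, i.e.\ (S2) is preserved.

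The only nontrivial ingredient is uniqueness of smooth solutions to \eqref{eq:clo-ef} with prescribed smooth initial data, which is standard and cited above; the rest is bookkeeping of the isometry--reparametrization action. The only place where a small amount of care is needed is the consolidation of the two branches of (S2) into a single identity via the identification $-2\sim 2$ on $\S^1$.
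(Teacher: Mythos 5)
Your proposal is correct and follows essentially the same route as the paper: the paper packages the isometry--reparametrization invariance plus geometric uniqueness into a ``symmetry preservation'' lemma (\cref{lem:sym-pres}) and then applies it with $F(p)=-p$, $\varphi(x)=-x$ for (S1) and with the reflection $R_v$ and $\varphi(x)=2-x$ (written piecewise via $-2\sim 2$) for (S2), exactly as you do. Your consolidation of the two branches of (S2) into $\gamma_0(2-y)=R_v\gamma_0(y)$ and the derivation of \cref{eq:cut-nat-bdr} from \cref{eq:s1-and-curv} match the paper's argument.
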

\begin{remark}
    Equation \cref{eq:cut-nat-bdr} yields that $\gamma|_{[0,\infty)\times [0,2]}$ and $\gamma|_{[0,\infty)\times[-2,0]}$ solve the elastic flow with initial data $\gamma_0|_{[0,2]}$ and $\gamma_0|_{[-2,0]}$ with so-called \emph{natural boundary conditions}, respectively.
\end{remark}
The proof of \cref{prop:S1-for-flow} is based on the observation that the elastic flow preserves all kinds of symmetries which can be described by isometries of $\D^2$. More precisely:
\begin{lemma}[Symmetry preservation]\label{lem:sym-pres}
    Let $F\colon\D^2\to\D^2$ be an isometry of $\D^2$ and $\varphi\colon\S^1\to\S^1$ a diffeomorphism. If $\gamma_0\colon\S^1\to\D^2$ is an immersion with
    \begin{equation}
        \gamma_0(x)=F(\gamma_0(\varphi(x)))\quad\text{for all $x\in\S^1$},
    \end{equation}
    then, for the solution $\gamma\colon[0,\infty)\times\S^1\to\D^2$ to \cref{eq:clo-ef}, one has
    \begin{equation}\label{eq:sym-pres}
        \gamma(t,x) = F(\gamma(t,\varphi(x)))\quad\text{for all $x\in\S^1$ and $t\geq 0$}.
    \end{equation}
\end{lemma}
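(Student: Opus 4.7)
The plan is to introduce the auxiliary family $\widetilde{\gamma}(t,x) \defeq F(\gamma(t,\varphi(x)))$ and to verify that $\widetilde{\gamma}$ satisfies exactly the same initial value problem \eqref{eq:clo-ef} as $\gamma$. Uniqueness of smooth solutions to the elastic flow — which is already implicitly used in this paper, cf. \eqref{eq:clo-ef} — then forces $\widetilde{\gamma}(t,x)=\gamma(t,x)$ for all $t\geq 0$ and $x\in\S^1$, which is precisely \eqref{eq:sym-pres}. The correctness of the initial datum is immediate from the hypothesis: $\widetilde{\gamma}(0,x)=F(\gamma_0(\varphi(x)))=\gamma_0(x)$.

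The core step is to check that $\widetilde{\gamma}$ solves the PDE $\partial_t\widetilde{\gamma}=-\nabla\E(\widetilde{\gamma})$. This rests on two natural invariance properties of the $L^2$-gradient which follow directly from the definition in \eqref{eq:first-var}: (i) if $\sigma\colon\S^1\to\D^2$ is an immersion and $\psi\colon\S^1\to\S^1$ a diffeomorphism, then $\nabla\E(\sigma\circ\psi)(x)=\nabla\E(\sigma)(\psi(x))$, since $\nabla\E$ is assembled from the arc-length element, $\curv$ and $\nabla_s^{\perp}\curv$, all of which are invariant under reparametrization; (ii) if $F\colon\D^2\to\D^2$ is an isometry, then $\nabla\E(F\circ\sigma)(x)=DF|_{\sigma(x)}\cdot\nabla\E(\sigma)(x)$, because $F$ preserves the metric $g$, the Levi-Civita connection $\nabla$, the Christoffel symbols appearing in \eqref{eq:cov-der}, and hence every geometric quantity entering \eqref{eq:first-var}.

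Combining (i) applied to $\psi=\varphi$ and (ii) applied to $\sigma=\gamma(t,\cdot)\circ\varphi$, one obtains
\begin{equation}
\nabla\E(\widetilde{\gamma}(t,\cdot))(x) = DF|_{\gamma(t,\varphi(x))}\cdot \nabla\E(\gamma(t,\cdot))(\varphi(x)).
\end{equation}
On the other hand, since $\varphi$ and $F$ are time-independent, the chain rule together with $\partial_t\gamma=-\nabla\E(\gamma)$ yields
\begin{equation}
\partial_t\widetilde{\gamma}(t,x) = DF|_{\gamma(t,\varphi(x))}\cdot\partial_t\gamma(t,\varphi(x)) = -DF|_{\gamma(t,\varphi(x))}\cdot\nabla\E(\gamma(t,\cdot))(\varphi(x)).
\end{equation}
Matching the two expressions gives $\partial_t\widetilde{\gamma}=-\nabla\E(\widetilde{\gamma})$, as required.

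The principal obstacle is a bookkeeping one rather than a substantive one: writing out (i) and (ii) at the level of the explicit expression for $\nabla\E$ requires verifying how each ingredient transforms, but each verification is standard once one notes that $F$ is an isometry and $\varphi$ a diffeomorphism of $\S^1$. After that, uniqueness of smooth solutions to the quasilinear parabolic system \eqref{eq:clo-ef} (which is the same tool already implicitly invoked whenever the paper speaks of ``the'' solution $\gamma$) closes the argument.
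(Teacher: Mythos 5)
Your proposal is correct and follows essentially the same route as the paper: define the competitor $F\circ\gamma(t,\cdot)\circ\varphi$, use the equivariance of $\nabla\E$ under isometries and reparametrizations to check it solves \eqref{eq:clo-ef} with the same initial datum, and conclude by uniqueness. The only cosmetic difference is that the paper verifies the equivariance via the transformation of $\curv$ under $dF$ and then invokes the (geometric) uniqueness result of \cite[Theorem 3.1]{dallacquaspener2017} by name, which is the precise citation your appeal to ``uniqueness of smooth solutions'' should point to.
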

\begin{proof}
    Define $\alpha\colon[0,\infty)\times \S^1\to\D^2$ by $\alpha(t,x)=F(\gamma(t,\varphi(x)))$. Then 
    \begin{align}
        \curv_{\alpha}(t,x)&=\nabla_{s} \partial_s\alpha(t,x) =  \nabla_s dF_{\gamma(t,\varphi(x))}(\partial_s (\gamma(t,\varphi(x)))) \\
        &= dF_{\gamma(t,\varphi(x))}(\nabla_s\partial_s(\gamma(t,\varphi(x)))) = dF_{\gamma(t,\varphi(x))}\curv_{\gamma}(t,\varphi(x))\label{eq:sym-curv},
    \end{align}
    using that $F$ is an isometry and that therefore $dF$ commutes with the covariant derivative. This immediately yields that 
    \begin{equation}
        \nabla\E(\alpha(t))(x) = dF_{\gamma(t,\varphi(x))}\nabla\E(\gamma(t)) (\varphi(x)).
    \end{equation}
    Therefore,
    \begin{align}
        \partial_t\alpha(t,x)&=dF_{\gamma(t,\varphi(x))}(\partial_t\gamma)(t,\varphi(x)) = -dF_{\gamma(t,\varphi(x))}(\nabla\E(\gamma(t))(\varphi(x)))\\
        &= -\nabla\E(\alpha(t))(x).
    \end{align}
    Moreover, $\alpha(0,x)=F(\gamma_0(\varphi(x)))=\gamma_0(x)$ by assumption. Therefore, $\alpha$ also solves \cref{eq:clo-ef}. By geometric uniqueness of \cref{eq:clo-ef} (cf. \cite[Theorem 3.1]{dallacquaspener2017}), we obtain that $\gamma=\alpha$. Therefore, the claim follows.
\end{proof}

% \begin{remark}\label{rem:sym}
%     Let $\gamma$ be an immersion satisfying (S1). Then $\gamma(0)=0$ is clear. With the computation in \cref{eq:sym-curv}, one obtains that (S1) further yields $\curv(-x)=-\curv(x)$. So particularly $\curv(0)=(0,0)^t$.
% \end{remark}

\begin{proof}[Proof of \cref{prop:S1-for-flow}.]
    Choosing $\varphi(x)=-x$ and $F(p)=-p$, we have $\gamma(t,-x)=-\gamma(t,x)$ for all $t\geq 0$ and $x\in\S^1$ by \cref{lem:sym-pres}, i.e. (S1) for $\gamma(t)$ for each $t\geq 0$. Then \cref{eq:cut-nat-bdr} follows by \cref{eq:s1-and-curv}.

    Regarding the preservation of (S2), define $\varphi\colon\S^1\to\S^1$ by
    \begin{equation}\label{eq:sym-s2-varphi}
        \varphi(x)=\begin{cases}
            -2-x&x\in[-2,0]\\
            2-x&x\in[0,2]
        \end{cases}
    \end{equation}
    where $\varphi$ is well-defined at $x=0$, since $-2\sim 2$. Moreover, $\varphi(-2)=0=\varphi(2)$. Furthermore, $F(p)=(-p^{(1)},p^{(2)})^t$ is an isometry of $\D^2$. Preservation of (S2) then follows from \cref{lem:sym-pres} applied with these choices of $\varphi$ and $F$.
\end{proof}

\begin{remark}\label{rem:sym-rep}
    Another important observation relates to how such symmetries behave with respect to reparametrizations, e.g. by constant Euclidean speed. To this end, denote by $I$ either a compact interval or $\S^1$. Let $\varphi\colon I\to I$ be a diffeomorphism and $F\colon\R^2\to\R^2$. Consider an immersion $\gamma\colon I\to\R^2$ with $\gamma(x)=F(\gamma(\varphi(x)))\quad\text{for all $x\in I$}$. 

    Let $\psi\colon I\to I$ be a diffeomorphism and $\widetilde{\gamma} = \gamma\circ\psi$. If 
    \begin{equation}\label{eq:sym-rep-2}
        \varphi(\psi(x)) = \psi(\varphi(x)) \quad\text{for all $x\in I$},
    \end{equation}
    then also $\widetilde{\gamma}(x)=F(\widetilde{\gamma}(\varphi(x)))\quad\text{for all $x\in I$}$. 
\end{remark}

\begin{proof}[Proof of \cref{thm:sf8-conv}.]
    Consider a sequence $t_n\nearrow \infty$. \cref{prop:S1-for-flow} yields that $\gamma(t_n)$ satisfies (S1). Writing $\psi_n\colon\S^1\to\S^1$ for the inverse of $y\mapsto \frac{4}{\Ll_{\R^2}(\gamma(t_n))}\int_0^y |\partial_x\gamma(t_n)|\dd x$, then $\widetilde{\gamma_n}=\gamma(t_n)\circ\psi_n$ is a reparametrization of $\gamma(t_n)$ by constant Euclidean speed. Moreover, as $-\psi_n^{-1}(y)=\psi_n^{-1}(-y)$, \cref{eq:sym-rep-2} is satisfied with $\varphi(x)=-x$. Thus, using \cref{rem:sym-rep}, also $\widetilde{\gamma_n}$ satisfies (S1). Thus, \cref{eq:cut-nat-bdr} remains true for each $\widetilde{\gamma_n}$. \cref{rem:hyp-len-lb,rem:euc-len-lb} yield $\inf_{n\in\N}\Ll_{\R^2}(\widetilde{\gamma_n})>0$. 

    Moreover, by \cref{thm:euc-len-poin}, one also has $\sup_{n\in\N}\Ll_{\R^2}(\widetilde{\gamma_n})<\infty$. Altogether, \cref{lem:int-lem} yields that, after passing to a subsequence, there exists $\widetilde{\gamma_{\infty}}\in W^{1,\infty}(\S^1)$ with $\widetilde{\gamma_n}\rightharpoonup^*\widetilde{\gamma_{\infty}}$ in $W^{1,\infty}(\S^1)$ and $\widetilde{\gamma_n}\to \widetilde{\gamma_{\infty}}$ uniformly. Furthermore, for $Z=\{|\widetilde{\gamma_{\infty}}|=1\}$, we have smooth convergence on compact sets in $\S^1\setminus Z$ and $\widetilde{\gamma_{\infty}}$ is an elastica in $\S^1\setminus Z$. In particular, $\widetilde{\gamma_{\infty}}$ also satisfies (S1) due to uniform convergence.

    Using \cref{prop:S1-for-flow}, each $\gamma(t_n)$ satisfies (S2). We now argue that also each $\widetilde{\gamma_n}$ satisfies (S2). Indeed, with $\varphi$ as in \cref{eq:sym-s2-varphi}, we have for $y\in[0,2]$
    \begin{align}
        \psi_n^{-1}(\varphi(y)) &= \psi_n^{-1}(2-y) = \frac{4}{\Ll_{\R^2}(\gamma(t_n))}\int_0^{2-y} |\partial_x\gamma(t_n)|\dd x \\
        &= \frac{4}{\Ll_{\R^2}(\gamma(t_n))}\int_0^{2} |\partial_x\gamma(t_n)|\dd x - \psi_n^{-1}(y) = 2-\psi_n^{-1}(y) = \varphi(\psi_n^{-1}(y)),
    \end{align}
    using (S2) for $\gamma(t_n)$. A similar computation works for $y\in[-2,0]$ so that \cref{eq:sym-rep-2} is satisfied. \cref{rem:sym-rep} yields that each $\widetilde{\gamma_n}$ also satisfies (S2). By uniform convergence, $\widetilde{\gamma_{\infty}}$ also satisfies (S2). 

    \cref{cor:ch-ve-poin} yields that $Z=\{x_1,\dots,x_m\}$ where $m\leq 3$, using $\E(\gamma_0)<32=4\cdot 8$. 
    By (S1), $\widetilde{\gamma_{\infty}}(-x)=-\widetilde{\gamma_{\infty}}(x)$ for all $x\in\S^1$ which yields $0,\pm2\notin Z$ as in \cref{eq:cut-nat-bdr}. Thus, the cardinality of $Z$ is necessarily an even number. That is, $m\leq 3$ yields $m=0$ or $m=2$.

    If however $m=0$, $Z=\emptyset$ and $\widetilde{\gamma_n}\to\widetilde{\gamma_{\infty}}$ smoothly on all of $\S^1$. Since the winding number is preserved along the flow, one obtains from the smooth convergence on $\S^1$ that $\widetilde{\gamma_{\infty}}$ is a closed elastica with winding number $0$. But such elastica do not exist by \cite[Corollary 5.8]{muellerspener2020}. Therefore, $m=2$.

    By (S1), $x_1=-x_2$ so that $Z=\{x_1,-x_1\}$ where we suppose without loss of generality that $x_1\in (0,2)$, using again (S1) as in \cref{eq:cut-nat-bdr}. Due to \cref{eq:en-ie-clo}, we have $\E(\widetilde{\gamma_{\infty}}|_{\S^1\setminus\{-x_1,x_1\}})\leq \E(\gamma_0)-16 < 16$. 

    Using also \cref{lem:basic-facts}(a), $\widetilde{\gamma_{\infty}}|_{(-x_1,x_1)}$ is an elastica with infinite hyperbolic length and finite elastic energy where the end-points satisfy
    \begin{equation}\label{eq:ep-geo}
        \widetilde{\gamma_{\infty}}(-x_1)=-\widetilde{\gamma_{\infty}}(x_1)\in\partial\D^2\subseteq\R^2.
    \end{equation}
    Using \cref{rem:el-sm-elen}, this yields that $\widetilde{\gamma_{\infty}}|_{(-x_1,x_1)}$ is a geodesic since both endpoints on $\partial\D^2$ of a globally defined, asymptotically geodesic elastica coincide, cf. \Cref{fig:elastica}. Using \cref{eq:ep-geo}, it follows that $\widetilde{\gamma_{\infty}}|_{(-x_1,x_1)}$ parametrizes the straight line connecting $-\widetilde{\gamma}(-x_1)$ and $\widetilde{\gamma}(-x_1)$.  Similarly, one obtains that also $\widetilde{\gamma_{\infty}}|_{\S^1\setminus[-x_1,x_1]}$ parametrizes the straight line connecting $-\widetilde{\gamma}(-x_1)$ and $\widetilde{\gamma}(-x_1)$. Since $|\partial_x\widetilde{\gamma_{\infty}}|$ is constant on $\S^1\setminus\{- x_1,x_1\}$, it follows that $x_1=1$ and therefore $|\partial_x\widetilde{\gamma_{\infty}}|\equiv 1$ on $\S^1\setminus\{-1,1\}$.

    Since $\widetilde{\gamma_{\infty}}$ is continuous and satisfies (S2), we necessarily have that 
    \begin{equation}
        \widetilde{\gamma_{\infty}}(\pm1)=\begin{pmatrix}
            -1&0\\0&1
        \end{pmatrix}\widetilde{\gamma_{\infty}}(\pm1),
    \end{equation}
    so that $\{\widetilde{\gamma_{\infty}}(-1),\widetilde{\gamma_{\infty}}(1)\}=\{-(0,1)^t,(0,1)^t\}$, using also $Z=\{-1,1\}$ and \cref{eq:ep-geo}. This yields the claim.
\end{proof}

\section{Singular behavior of elastic flow of vertically clamped symmetric curves}\label{sec:vert-cla}

\begin{definition}
    We say that an immersion $\gamma\colon[-1,1]\to\D^2$ satisfies (S2'), if 
    \begin{equation}
        \gamma(-x)=\begin{pmatrix}
        -1&0\\0&1
        \end{pmatrix}\gamma(x)\quad\text{for all $x\in[-1,1]$}.
    \end{equation}
\end{definition}

\begin{remark}[On symmetric, ``vertically clamped'' curves]
    Consider $\gamma\colon[-1,1]\to\D^2$. By calling $\gamma$ ``vertically clamped'' in this section's title, we mean that $\gamma$ satisfies 
    \begin{equation}\label{eq:sing-bdry}
        \gamma(\pm1)=(0,0)^t\quad\text{and}\quad \partial_s\gamma(\pm1)=\pm(0,1)^t.
    \end{equation}
    Furthermore, in this section, $\gamma$ is called symmetric, if it satisfies (S2'). More explicitly, in \cite[Section 5]{schlierf2023}, a family $(\gamma_n)_{n\in\N}$ of symmetric, vertically clamped curves is constructed with $\E(\gamma_n)\searrow 8$. Furthermore, using the Li-Yau inequality in \cite[Theorem 6.1]{schlierf2023}, any such vertically clamped curve $\gamma$ satisfies $\E(\gamma)>8$ due to the prescribed self-intersection $\gamma(-1)=\gamma(1)$. 

    In \cite{schlierf2023}, the associated surfaces of revolution of the profile curves in $\H^2$ of such symmetric, vertically clamped curves $\gamma_n$ are used as initial data for the Willmore flow with Dirichlet boundary data to show that the flow can develop singularities. As we show below, something similar happens also in the case of the elastic flow --- however, here we can exactly describe the convergence behavior and the limit.
\end{remark}

\begin{theorem}
    Consider an immersion $\gamma_0\colon[-1,1]\to\D^2$ satisfying the boundary conditions \eqref{eq:sing-bdry}, (S2') and $8<\E(\gamma_0)<16$. If $\widetilde{\gamma(t)}$ is the reparametrization of $\gamma(t)$ by constant Euclidean speed, then $\widetilde{\gamma(t)}\to\widetilde{\gamma_{\infty}}$ uniformly and smoothly on compact subsets of $[-1,1]\setminus\{0\}$ where 
    \begin{equation}
        \widetilde{\gamma_{\infty}}(x)=\begin{cases}
            (0,-1-x)&x\in[-1,0]\\
            (0,-1+x)&x\in[0,1].
        \end{cases}
    \end{equation}
\end{theorem}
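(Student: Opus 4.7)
The plan is to mirror the proof of \cref{thm:sf8-conv}, adapted to the clamped setting. I would carry out the argument in four main steps.

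First, I would establish that the symmetry (S2') is preserved along the clamped elastic flow. This can be done via a uniqueness argument analogous to \cref{lem:sym-pres}: with $F(p) = (-p^{(1)}, p^{(2)})^t$ and $\alpha(t,x) \defeq F(\gamma(t,-x))$, one checks that $\alpha$ solves the same flow equation with the same initial datum and the same clamped boundary data. This uses that $F$ is an isometry of $\D^2$ fixing $(0,0)^t$, and that the tangent condition $\partial_s\gamma(\pm1)=\pm(0,1)^t$ transforms to itself under the combined action $x\mapsto -x$, $p\mapsto F(p)$ (the two sign flips cancel). By uniqueness of the clamped flow (cf. \cite{dallacquaspener2017,schlierf2023}), $\alpha = \gamma$, which is (S2') for $\gamma(t)$. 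The constant-Euclidean-speed reparametrization $\widetilde{\gamma(t)}$ then inherits (S2') via the computation in \cref{rem:sym-rep}, since $|\partial_x\gamma(t,\cdot)|$ is even in $x$ and thus the reparametrizing diffeomorphism commutes with $x\mapsto-x$.

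Second, I would apply \cref{thm:quan-cla} to an arbitrary sequence $t_n\nearrow\infty$: after passing to a subsequence, $\widetilde{\gamma(t_n)}\to\widetilde{\gamma_\infty}^\ast$ uniformly on $[-1,1]$ and smoothly on $[-1,1]\setminus Z$, with $Z=\{x_1,\ldots,x_m\}\subseteq(-1,1)$. The energy inequality \eqref{eq:en-ie-cla} together with the hypothesis $\E(\gamma_0)<16$ forces $m\leq 1$. Uniform convergence transfers (S2') to $\widetilde{\gamma_\infty}^\ast$, so $Z$ is symmetric under $x\mapsto -x$, and the only possibilities left are $m=0$ or $m=1$ with $Z=\{0\}$.

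Third --- and this is where I expect the main difficulty --- I would rule out the case $m=0$. If $m=0$ held, $\widetilde{\gamma_\infty}^\ast$ would be a smooth free elastica on $[-1,1]$ satisfying both \eqref{eq:sing-bdry} and (S2'), with $\E(\widetilde{\gamma_\infty}^\ast)\leq\E(\gamma_0)<16$. The analogue of the closed-curve argument in the proof of \cref{thm:sf8-conv} would invoke a nonexistence result for such elastica, playing the role played there by \cite[Corollary~5.8]{muellerspener2020}. A natural strategy is to use that $\widetilde{\gamma_\infty}^\ast(\pm 1)=(0,0)^t$ is a double point whose two tangent vectors are \emph{opposite}, together with (S2'), to either reduce to a symmetric ``figure-eight''-type configuration that conflicts with the strict Li–Yau bound of \cite[Theorem~6.1]{schlierf2023}, or to exclude such an elastica via the classification in \cref{prop:2.8} combined with the rigid tangent conditions at $x=\pm1$ and $x=0$. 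This step is the technical heart of the theorem.

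Fourth, once $m=1$ with $Z=\{0\}$ is in hand, I would identify the limit as in the closed case. From \eqref{eq:en-ie-cla}, each of the two pieces $\widetilde{\gamma_\infty}^\ast|_{(-1,0)}$ and $\widetilde{\gamma_\infty}^\ast|_{(0,1)}$ carries energy strictly less than $8$; but asymptotically geodesic elastica have energy exactly $8$, so each piece is a geodesic. Geodesics through $(0,0)^t$ in $\D^2$ are Euclidean straight lines, so each piece is a line segment from $(0,0)^t$ to a point of $\partial\D^2$. By (S2'), $\widetilde{\gamma_\infty}^\ast(0)\in\{0\}\times\{\pm 1\}$, and the boundary tangent $\partial_s\gamma_0(1)=(0,1)^t$ (i.e., the curve moves upward when entering $(0,0)^t$ at $x=1$) pins down $\widetilde{\gamma_\infty}^\ast(0)=(0,-1)^t$. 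Constant-Euclidean-speed parametrization then yields the explicit formulas in the theorem. Since $\widetilde{\gamma_\infty}^\ast$ is independent of the subsequence, a standard subsequence argument upgrades the subsequential convergence to full convergence as $t\to\infty$.
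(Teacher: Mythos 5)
Your architecture follows the paper's: symmetry preservation via uniqueness, application of \cref{thm:quan-cla}, parity of $Z$ under (S2') forcing $Z=\emptyset$ or $Z=\{0\}$, exclusion of $Z=\emptyset$ by a nonexistence result for clamped symmetric elastica, and identification of the two remaining pieces. However, your fourth step contains a genuine gap. You exclude the asymptotically geodesic case by arguing that ``asymptotically geodesic elastica have energy exactly $8$'', but the objects arising here are not complete asymptotically geodesic elastica: each piece $\widetilde{\gamma_{\infty}}|_{(0,1]}$ has one endpoint $(0,0)^t$ in the \emph{interior} of $\D^2$ and hence corresponds only to a half-infinite portion of the arc-length parametrized curve. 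For the free asymptotically geodesic elastica one has $\scurv^2(s)=4\operatorname{sech}^2(s)$, so a segment $[s_0,\infty)$ carries energy $4(1-\tanh s_0)$, which takes every value in $(0,8)$. Thus neither the bound $\E<8$ per piece nor the sharper $\E<4$ obtained from \eqref{eq:en-ie-cla} and the isometry of the two halves rules out the asymptotically geodesic alternative, and \cref{rem:el-sm-elen} genuinely leaves both cases open. The paper closes this with a geometric, not an energetic, argument: by \cref{rem:on-as-geo-el}, a segment of an asymptotically geodesic elastica in $\H^2$ that starts with vertical tangent cannot reach the ideal boundary with unchanged first coordinate; here, transporting the piece to $\H^2$ via $\Phi$ (after negating if $\widetilde{\gamma_{\infty}}(0)=(0,1)^t$) sends $(0,0)^t\mapsto(0,1)^t$ and $(0,-1)^t\mapsto(0,0)^t$, so the boundary data \eqref{eq:sing-bdry} and $\widetilde{\gamma_{\infty}}(0)=(0,\pm1)^t$ would force exactly such a purely vertical drop, a contradiction. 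Without this (or an equivalent) argument your proof does not identify the limit.

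A secondary, smaller concern is your third step. You correctly isolate the needed nonexistence statement (no smooth free elastica on $[-1,1]$ with \eqref{eq:sing-bdry} and (S2')), but of the two strategies you sketch, the Li--Yau route cannot work as stated: \cite[Theorem~6.1]{schlierf2023} only yields $\E>8$ for vertically clamped curves with the prescribed self-intersection, and the hypothesis already allows $\E(\gamma_0)\in(8,16)$, so there is no contradiction from energy alone. The paper instead cites the argument of \cite[Remark~5.23]{schlierf2023}, which is in the spirit of your second suggestion (classification via \cref{prop:2.8} combined with the rigid tangent conditions at $x=\pm1$); to make your proof complete you would need to actually carry out that exclusion rather than defer it.
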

\begin{proof}
    Let $t_n\nearrow \infty$ and write $\widetilde{\gamma_n}=\widetilde{\gamma(t_n)}$. Using \cref{thm:quan-cla}, there exists $\widetilde{\gamma_{\infty}}$ with $\widetilde{\gamma_n}\to\widetilde{\gamma_{\infty}}$ uniformly on $[-1,1]$ and smoothly on compact sets in $[-1,1]\setminus Z$ where $Z=\{|\widetilde{\gamma_{\infty}}|=1\}$ either satisfies $Z=\emptyset$ or $Z=\{x_1\}$, using $\E(\gamma_0)<16$. Furthermore, $\nabla\E(\widetilde{\gamma_{\infty}})=0$ on $[-1,1]\setminus Z$.

    By \cref{lem:sym-pres}, each $\gamma(t_n)$ satisfies (S2') and, using \cref{rem:sym-rep}, so does each $\widetilde{\gamma_n}$. By the uniform convergence, also $\widetilde{\gamma_{\infty}}$ satisfies (S2').    

    If $Z=\emptyset$, then $\widetilde{\gamma_{\infty}}$ is an elastica with (S2') and the boundary conditions of \cref{eq:sing-bdry}. This contradicts the arguments of \cite[Remark 5.23]{schlierf2023} showing that no such elastica exists. That is, $Z=\{x_1\}$ for some $x_1\in[-1,1]$. Using (S2'), $|\widetilde{\gamma_{\infty}}(-x_1)|=|\widetilde{\gamma_{\infty}}(x_1)|=1$. So $-x_1\in Z=\{x_1\}$, i.e. $x_1=0$. Moreover, note that (S2') and $|\widetilde{\gamma_{\infty}}(0)|=1$ yield $\widetilde{\gamma_{\infty}}(0)=(0,\pm1)^t$. Lastly, using that the segments $\widetilde{\gamma_{\infty}}|_{[-1,0)}$ and $\widetilde{\gamma_{\infty}}|_{(0,1]}$ are isometric in $\D^2$, we have $\E(\widetilde{\gamma_{\infty}}|_{[-1,0)})<4$ by \cref{eq:en-ie-cla}. By \cref{rem:el-sm-elen}, $\widetilde{\gamma_{\infty}}|_{[-1,0)}$ either is a segment of a geodesic or of an asymptotically geodesic elastica. If $\widetilde{\gamma_{\infty}}|_{[-1,0)}$ is a segment of a geodesic, then $\widetilde{\gamma_{\infty}}(0)=(0,\pm1)^t$ and \cref{eq:sing-bdry} yield the claim.

    If however $\widetilde{\gamma_{\infty}}|_{[-1,0)}$ was a segment of an asymptotically geodesic elastica, $\widetilde{\gamma_{\infty}}(0)=(0,\pm1)^t$ and $\widetilde{\gamma_{\infty}}(\pm1)=(0,0)^t$, $\partial_s\widetilde{\gamma_{\infty}}(\pm1)=(0,\pm1)^t$ would contradict \cref{rem:on-as-geo-el} applied to $\Phi\circ(\mp\widetilde{\gamma_{\infty}})$, using \cref{eq:def-Phi}.
\end{proof}

\appendix

\section{Technical and computational proofs}\label{app:el}

\begin{proof}[Proof of \cref{lem:basic-facts}.]
    For (a), by the fundamental theorem of calculus and the Cauchy-Schwarz inequality, using $|\gamma(c)|^2\to 1$ for $c\nearrow b$ and $|\gamma|\leq 1$,
    \begin{align}
        \infty &= \log(1-|\gamma(a)|^2) - \lim_{c\nearrow b} \log(1-|\gamma(c)|^2) \leq \lim_{c\nearrow b}\int_a^{c} \frac{2|\langle \gamma,\partial_x\gamma\rangle|}{1-|\gamma|^2}\dd x\\
        &\leq \int_a^b \frac{2|\partial_x\gamma|}{1-|\gamma|^2}\dd x = \int_a^b |\partial_x\gamma|_g\dd x = \Ll_{\D^2}(\gamma).
    \end{align}
    For (b), first note $0\leq |\gamma|^2<1$ so that $\frac{2}{1-|\gamma|^2}\geq 2$. Hence,
    \begin{equation}
        \Ll_{\D^2}(\gamma) = \int_I \frac{2}{1-|\gamma|^2}|\partial_x\gamma|\dd x \geq 2\int_I |\partial_x\gamma|\dd x=2\Ll_{\R^2}(\gamma)
    \end{equation}
    and the claim follows.
\end{proof}

\begin{proof}[Proof of \cref{cor:bu-len}]
    Firstly, by restricting each $u_n$ to a sub-interval of $I$ and then reparametrizing the restriction to a new curve in $I$, one can justify that w.l.o.g $\ell\leq \Ll_{\R^2}(u_n)\leq 2\ell=L$ for all $n\in\N$. 

    First, if $\int_I |\partial_xu_n^{(1)}|\dd x\not\to 0$, we can simply apply \cref{cor:bu-elen} to obtain the claim in this case. Therefore, without loss of generality suppose that $\int_I |\partial_xu_n^{(1)}|\dd x\to 0$ for $n\to\infty$. We further assume that $I=[0,1]$ and $|\partial_xu_n|\equiv \Ll_{\R^2}(u_n)=\vcentcolon \ell_n$. 

    Let $0<\varepsilon<\min\{\frac{1}{32}\ell,\frac{1}{64},\ell^2\}$ and let $n$ be large enough such that $\int_I|\partial_xu_n^{(1)}|\dd x < \varepsilon$ and $u_n^{(2)}< \varepsilon$. In the following, we fix some such sufficiently large $n\in\N$. 
    First, we argue that
    \begin{equation}\label{eq:bu-len-101}
        |\{\partial_xu_n^{(2)}\neq 0\}|\geq\frac14.
    \end{equation}
    Indeed, using $\ell\leq |\partial_xu_n|\leq L=2\ell$ and Fubini, we have
    \begin{align}
        \ell&\leq\int_I|\partial_xu_n|\dd x \leq \varepsilon + \int_I |\partial_xu_n^{(2)}|\dd x \\
        &<\frac\ell2 + \int_0^{2\ell} |\{|\partial_xu_n^{(2)}|>\tau\}|\dd \tau \leq \frac\ell2 + 2\ell \cdot |\{\partial_xu_n^{(2)}\neq 0\}| 
    \end{align}
    so that \cref{eq:bu-len-101} follows. Now denote by $(I_j)_{j=1}^{N_{\varepsilon}}$ the closures of the countably many connected components of the open set $\{\partial_xu_n^{(2)}\neq 0\}$ where $N_{\varepsilon}\in\N\cup\{\infty\}$. By \cref{eq:bu-len-101}, 
    \begin{equation}\label{eq:bu-len-102}
        \sum_{j=1}^{N_{\varepsilon}} |I_j| = \sum_{j=1}^{N_{\varepsilon}} |I_j^{\circ}| = |\{\partial_xu_n^{(2)}\neq 0\}|\geq\frac14.
    \end{equation}
    If $I_j=[a_j,b_j]$ where we label the intervals such that, without loss of generality, $I_j\subseteq (0,1)$ for $j\geq 3$, then $\partial_xu_n^{(2)}(a_j)=\partial_xu_n^{(2)}(b_j)=0$ for $j\geq 3$. In particular, $\partial_xu_n^{(2)}$ does not change sign on $I_j$ for $j\geq 3$. Since $0<u_n^{(2)}\leq \varepsilon$, one has $\int_{I_j} |\partial_xu_n^{(2)}|\dd x = | \int_{I_j} \partial_xu_n^{(2)}\dd x | < \varepsilon$, using $u_n^{(2)}>0$, and thus
    \begin{equation}\label{eq:bu-len-1}
        |I_j| = \frac{1}{\ell_n} \int_{I_j} |\partial_xu_n|\dd x \leq \frac{\varepsilon + \varepsilon}{\ell_n} \leq 2\frac{\varepsilon}{\ell}.
    \end{equation}
    Note that \cref{eq:bu-len-102,eq:bu-len-1} immediately result in $\frac14 \leq 2\frac\varepsilon\ell N_{\varepsilon}$, i.e. $N_{\varepsilon}\geq \frac{\ell}{8\varepsilon}\geq 4$. Moreover, one obtains 
    \begin{equation}\label{eq:bu-len-001}        
        \sum_{j=3}^{N_{\varepsilon}}|I_j|\geq \frac14-4\frac\varepsilon\ell\geq \frac{1}{8}>\sqrt{\varepsilon}.
    \end{equation}
    So we can choose $3\leq J_1\leq N_{\varepsilon}$ minimal such that 
    \begin{equation}\label{eq:bu-len-100}
        \sum_{j=3}^{J_1}|I_j|\geq \sqrt{\varepsilon}.
    \end{equation}
    Since $\int_I|\partial_xu_n^{(1)}|\dd x < \varepsilon$, there exist $j_1\in\{3,\dots,J_1\}$ and $\xi_1\in J_{j_1}$ with 
    \begin{equation}
        |\partial_xu_n^{(1)}(\xi_1)| < \frac{\varepsilon}{\sqrt{\varepsilon}} = \sqrt{\varepsilon}. 
    \end{equation}
    In particular, using $(\partial_xu_n^{(2)})^2 = \ell_n^2-(\partial_xu_n^{(1)})^2 \geq \ell^2-(\partial_xu_n^{(1)})^2$, $|\partial_xu_n^{(2)}(\xi_1)|\geq\sqrt{\ell^2-\varepsilon}$. 
    
    We now distinguish two cases. Firstly, if $\partial_xu_n^{(2)}\geq 0$ on $I_{j_1}$, then by \cref{eq:br-gr},
    \begin{equation}
        \E(u_n|_{[a_{j_1},\xi_1]}) \geq 4 \frac{\partial_xu_n^{(2)}(\xi_1)-\partial_xu_n^2(a_{j_1})}{\ell_n} = 4 \frac{\partial_xu_n^{(2)}(\xi_1)}{\ell_n} \geq \frac{4\sqrt{\ell^2-\varepsilon}}{L}.
    \end{equation}
    If however $\partial_xu_n^{(2)}\leq 0$ on $I_{j_1}$, then again by \cref{eq:br-gr},
    \begin{equation}
        \E(u_n|_{[\xi_1,b_{j_1}]}) \geq 4 \frac{\partial_xu_n^{(2)}(b_{j_1})-\partial_xu_n^2(\xi_1)}{\ell_n} = 4 \frac{-\partial_xu_n^{(2)}(\xi_1)}{\ell_n} \geq \frac{4\sqrt{\ell^2-\varepsilon}}{L}.
    \end{equation}
    Altogether, we have that
    \begin{equation}
        \E(u_n|_{\bigcup_{j=3}^{J_1}I_j}) \geq \frac{4\sqrt{\ell^2-\varepsilon}}{L}.
    \end{equation}
    Moreover, note that $\sum_{j=3}^{J_1}|I_j|\leq \sqrt{\varepsilon} + 2\frac\varepsilon\ell\leq 3\sqrt{\varepsilon}$, using \cref{eq:bu-len-1,eq:bu-len-100} and the minimality of $J_1$. 

    Inductively repeating the above arguments, there exist $J_1<J_2<\dots<J_{M_{\varepsilon}}\leq N_{\varepsilon}$ such that, writing $J_0=2$,
    \begin{equation}
        \E(u_n|_{\bigcup_{j=J_{i-1}+1}^{J_i}I_j}) \geq \frac{4\sqrt{\ell^2-\varepsilon}}{L}\quad\text{for all $i=1,\dots,M_{\varepsilon}$}
    \end{equation}
    and where $M_{\varepsilon} \geq \frac{\sum_{j=3}^{N_{\varepsilon}}|I_j|}{3\sqrt{\varepsilon}}\geq \frac{1}{24\sqrt{\varepsilon}}$, using that, by \cref{eq:bu-len-001}, $\sum_{j=3}^{N_{\varepsilon}}|I_j|\geq \frac18$, and the fact that $\sum_{j=J_{i-1}+1}^{J_i}|I_j|\leq 3\sqrt{\varepsilon}$ which holds by construction. This proves the claim.
\end{proof}

\begin{proof}[Proof of \cref{rem:on-as-geo-el}]
    Firstly, all asymptotically geodesic elastica are isometric to one another by the fundamental theorem of curve theory, using \cite[Lemma 2.9]{muellerspener2020} and \cref{prop:2.8}. A parametrization by arc-length of an asymptotically geodesic elastica $u\colon\R\to\H^2$ is given by
    \begin{equation}\label{eq:as-geo-el}
        u(x) = \frac{1}{x^2+\cosh^2(x)} (x,\cosh(x))^t.
    \end{equation}
    Suppose that there exists a segment $\sigma\colon[0,1)\to\H^2$ of an asymptotically geodesic elastica with $\sigma(0)=(0,1)^t$ and $\partial_s\sigma(0)=(0,\pm1)^t$ and $\lim_{x\nearrow 1}\sigma(x)=(0,0)^t$. Then consider the geodesic $\tau\colon [0,1)\to\H^2$ given by $\tau(x)=(1-x)(0,1)^t$. Note that $\tau(0)=(0,1)^t$ and $\partial_s\tau(0)=(0,-1)^t$ and $\lim_{x\nearrow 1}\tau(x)=(0,0)^t$. As aforementioned, there exists an isometry $F\colon\H^2\to\H^2$ such that $F\circ\sigma$ is a reparametrization of a segment of $u$. Thus, $\hat{\tau}=F\circ\tau$ is a geodesic with $\lim_{x\nearrow 1}\hat{\tau}(x)=(0,0)^t$ and which touches $u$ tangentially at $\hat{\tau}(0)$. By a straight-forward computation, using \cref{eq:as-geo-el} and the classification of geodesics in $\H^2$, one obtains that this is impossible.

    Indeed, without loss of generality, up to reparametrization, $\hat{\tau}$ is a segment of $v\colon(0,\pi)\to\H^2$ given by $v(x)=h(\cos(x)+1,\sin(x))$ for some $h>0$. One then computes that the only $x_u\in\R$, $x_v\in (0,\pi)$ with $u(x_u)=v(x_v)$ are given by 
    \begin{equation}
        x_u=\frac{1}{2h}\quad\text{and}\quad x_v=2 \arctan\left(2 h \cosh \left(\frac{1}{2 h}\right)\right).
    \end{equation}
    Moreover, one obtains that
    \begin{equation}
        \det\big(\partial_xu(x_u) \ | \ \partial_xv(x_v)\big) = -\frac{4 h^3}{2 h^2+2 h^2 \cosh \left(\frac{1}{h}\right)+1} \neq 0
    \end{equation}
    which yields that $u$ and $v$ intersect transversally, i.e. their tangent vectors are not collinear at their intersection.
\end{proof}

\begin{proof}[Proof of \cref{lem:el-fin-len}.]
    Since, for any immersion $\gamma\colon I\to\D^2$ of an interval $I$ into $\D^2$, $\Ll_{\D^2}(\gamma)\geq 2\Ll_{\R^2}(\gamma)$ by \cref{lem:basic-facts}(b), \cref{rem:el-sm-elen} yields that, if there are elastica with finite elastic energy but infinite Euclidean length, they are either geodesics or asymptotically geodesic. However, it is well known that all geodesics in $\D^2$ have finite Euclidean length. Moreover, all asymptotically geodesic elastica are isometric to one another by the fundamental theorem of curve theory, using \cite[Lemma 2.9]{muellerspener2020} and \cref{prop:2.8}.

    One parametrization of an asymptotically geodesic elastica by hyperbolic arc-length in $\D^2$ is given by $\gamma=\Phi^{-1}\circ u\colon\R\to\D^2$, with $\Phi$ and $u$ as in \cref{eq:def-Phi,eq:as-geo-el}, that is
    \begin{equation}\label{eq:param-as-geo}
        \gamma(x)=\frac{1}{1+x^2+\cosh(x)(2+\cosh(x))}\big(2x,1-x^2-\cosh^2(x)\big)^t.
    \end{equation}
    Note that any isometry $\D^2\to\D^2$ is given by a Möbius transformation keeping $\D^2$ fixed. More precisely, using complex notation for $\C=\R^2$, all isometries are given by
    \begin{equation}
        F_{\theta,c}\colon\D^2\to\D^2,\quad z\mapsto e^{i\theta} \frac{z-c}{\overline{c}z-1}
    \end{equation}
    where $\theta\in\R$ and $c\in \D^2$ are arbitrary. Since rotations are isometries of $\R^2$ and thus preserve Euclidean lengths of curves, one can restrict to $\theta=0$. Then, for any $c\in\D^2$, one needs to verify that $|\partial_x (F_{0,c}\circ\gamma)|\in L^1(\R)$ which is a straight-forward computation.

    Indeed, viewing $\D^2$ as a subset of $\C$, one finds that, for $\gamma_c\vcentcolon= F_{0,c}\circ\gamma$, 
    \begin{equation}
        \partial_x\gamma_c(x)=\frac{2 \left(| c| ^2-1\right) (1-i \sinh (x))}{\left(i \overline{c} (i
        x+\cosh (x)-1)+i x+\cosh (x)+1\right)^2}.
    \end{equation}
    Note here that $i\overline{c}+1$ is never equal to $0$ since $c\in\D^2$, i.e. $|c|<1$. So the dominating term in the denominator for $|x|\to\infty$ of $|\partial_x\gamma_c|$ is $|i\overline{c}+1| \cosh^2(x)$ while the numerator can be estimated by $4(1+|\sinh(x)|)$. This clearly gives integrability on $\R$. So also all asymptotically geodesic elastica in $\D^2$ have finite Euclidean length.
\end{proof}

\section*{Acknowledgments}
The author would like to thank Anna Dall’Acqua for helpful discussions and comments. Moreover, the author is grateful to the anonymous referee for their valuable comments on the original manuscript.
\section*{Data availability} 
Data sharing not applicable to this article as no datasets were generated or analyzed during the current study. 

% \bibliographystyle{alpha}
% \bibliography{biblio}
\newcommand{\etalchar}[1]{$^{#1}$}

\end{document}